\documentclass[11pt]{amsart}

\usepackage[margin=1in]{geometry} 
\usepackage[english]{babel}

\usepackage{amsmath, amsthm, amssymb, mathtools, mathrsfs, bbm}

\usepackage[ruled,vlined]{algorithm2e}
\usepackage{algorithmic} 
\algsetup{indent=7mm}

\usepackage{graphicx}
\usepackage{epsfig} 
\usepackage{epstopdf} 

\usepackage{booktabs}
\usepackage{multirow}
\usepackage{longtable}
\newcommand{\br}{\toprule}   
\newcommand{\mr}{\midrule}

\usepackage{enumerate}

\usepackage{listings}
\usepackage{xcolor} 

\usepackage{thmtools}
\usepackage{thm-restate}

\usepackage[pdftex, colorlinks=true, linkcolor=blue, citecolor=red, pagebackref=false, pdfborder={0 0 0}]{hyperref}
\usepackage{url}

\usepackage[all]{xy}
\usepackage{tikz-cd}

\usepackage{float}
\usepackage{caption}
\usepackage{subcaption}

\usepackage{relsize}
\usepackage[foot]{amsaddr} 
\usepackage{verbatim}  

\setcounter{MaxMatrixCols}{30}%

\newcommand{\R}{\mathbb{R}}

\newtheorem{thm}{Theorem}[section]
\newtheorem{prop}[thm]{Proposition}
\newtheorem{lem}[thm]{Lemma}
\newtheorem{cor}[thm]{Corollary}

\theoremstyle{definition}

\newtheorem{defn}[thm]{Definition}
\newtheorem{rem}[thm]{Remark}

\theoremstyle{remark}

\numberwithin{equation}{section}

\title{Moments, Time-Inversion and Source Identification for the Heat Equation
}

\date{\today}

\author[K. Liu and E. Zuazua]{Kang Liu$^1$}
\address{$^1$Universit\'e Bourgogne Europe, CNRS, Institut de Mathematiques de Bourgogne, 21000 Dijon, France.}

\author{Enrique Zuazua$^2$$^3$$^4$}
\address{$^2$Friedrich\ -\ Alexander\ -\ Universit\"at Erlangen\ -\ N\"urnberg, Department of Mathematics, Chair for Dynamics, Control, Machine Learning, and Numerics (Alexander von Humboldt Professorship), 91058 Erlangen, Germany.}
\address{$^3$Universidad Aut\'onoma de Madrid, Departamento de Matem\'aticas, 28049 Madrid, Spain.}
\address{$^4$Chair of Computational Mathematics, Fundaci\'on Deusto, 48007 Bilbao, Basque Country, Spain.}
\email{kang.liu@u-bourgogne.fr, enrique.zuazua@fau.de}

\keywords{Heat equation, inverse problem, moment methods, representer theorem, numerical algorithm}

\subjclass[2020]{68T07, 68T09, 90C06, 90C26}

\thanks{E. Zuazua was funded by the European Research Council (ERC) under the European Union's Horizon 2030 research and innovation programme (grant agreement NO: 101096251-CoDeFeL), the Alexander von Humboldt-Professorship program, the ModConFlex Marie Curie Action, HORIZON-MSCA-2021-dN-01, the COST Action MAT-DYNNET, the Transregio 154 Project of the DFG, AFOSR Proposal 24IOE027 and grants PID2020-112617GB-C22/AEI/10.13039/501100011033 and TED2021131390B-I00/ AEI/10.13039/501100011033 of MINECO (Spain), and Madrid GovernmentUAM Agreement for the Excellence of the University Research Staff in the context of the V PRICIT (Regional Programme of Research and Technological Innovation).}

\begin{document}

\begin{abstract}
 We address the initial source identification problem for the heat equation, a notably ill-posed inverse problem characterized by exponential instability. Departing from classical Tikhonov regularization, we propose a novel approach based on moment analysis of the heat flow, transforming the problem into a more stable inverse moment formulation. 
  By evolving the measured terminal time moments backward through their governing ODE system, we recover the moments of the initial distribution. We then reconstruct the source by solving a convex optimization problem that minimizes the total variation of a measure subject to these moment constraints. This formulation naturally promotes sparsity, yielding atomic solutions that are sums of Dirac measures.
  Compared to existing methods, our moment-based approach reduces exponential error growth to polynomial growth with respect to the terminal time.
  We provide explicit error estimates on the recovered initial distributions in terms of moment order, terminal time, and measurement errors. In addition, we develop efficient numerical discretization schemes and demonstrate significant stability improvements of our approach through comprehensive numerical experiments.
\end{abstract}

\maketitle

\section{Introduction}

\subsection{Problem statement and motivation}
Let $d\in\mathbb{Z}_{+}$ (the set of non-negative integers) and consider the heat equation on $\mathbb{R}^d$:
\begin{equation}\label{eq:heat}
\left\{
\begin{array}{ll}
\partial_t u(x,t)-\Delta u(x,t)=0, & (x,t)\in \mathbb{R}^d\times(0,\infty),\\[1mm]
u(\cdot,0)=u_0^\ast,
\end{array}
\right.
\end{equation}
where 
where \(u_0^*\) lies in the tempered distribution space $\mathcal{S}'(\R^d)$.  
Fix a terminal time \(T > 0\). 

The \textbf{time-inversion} or \textbf{initial source identification problem} associated with equation \eqref{eq:heat} is to determine the initial condition \(u_0^*\) from the observed terminal solution \(u(\cdot, T)\). This inverse problem has significant real-world applications, particularly in detecting pollution sources \cite{el2005identification,li2006determining,ozisik2018inverse} and in image denoising \cite{engl1996regularization}.

Although the general formulation considers \(u_0^* \in \mathcal{S}'(\R^d)\), in many practical settings the source is at a finite number of points (e.g.\ pollution spills, point-heat sources):
\begin{equation}\label{eq:Dirac}
    u_0^* = \sum_{i=1}^N m_i \, \delta_{x_i}, 
\end{equation}
with \( m_i \in \mathbb{R} \) and \( x_i \in \mathbb{R}^d \).  
This \emph{atomic ansatz} captures the inherent sparsity of the physical sources and, thanks to the density of finite Dirac sums in $\mathcal S'(\R^d)$ (in the weak-$*$ sense), does not sacrifice generality. Several studies, including \cite{li2014heat,biccari2023two}, focus specifically on this setting and highlight the significant challenges associated with recovering such singular sources. This is  due to the well-known dramatic ill-posedness of the backward resolution of the heat equation. 
Our main result (Theorem~\ref{thm:rate}) requires only that \(u_0^*\) be a finitely supported Radon measure, thus encompassing the Dirac sum case \eqref{eq:Dirac}. This broadens the scope of existing results and improves upon the assumptions commonly found in the literature.

\subsubsection{Ill-posedness.} The solution of the heat equation \eqref{eq:heat} is given by the convolution 
of the heat kernel (denoted by \(G(t)\), see Eq.~\eqref{eq:kernel}) and the initial distribution.

 A natural approach to the initial source identification problem is to solve the optimization problem
\begin{equation}\label{pb:OC_pde}
    \inf_{u_0\in \mathcal{S}'(\R^d)} \left\| G(T)*u_0  - u_{\mathrm{obs}} \right\|_{L^2(\R^d)},
\end{equation}
where \(u_{\mathrm{obs}} \approx u(\cdot,T)\) is the observed terminal distribution (assumed, to fix ideas, to lie in \(L^2(\R^d)\)). However, it is well known that \eqref{pb:OC_pde} is numerically challenging due to its inherent ill-posedness \cite[Sec.~1.5]{engl1996regularization}.

To illustrate the ill-posedness from a Fourier perspective, assume that \eqref{pb:OC_pde} admits an exact solution \(u_0\) so that
\[
G(T)*u_0 = u_{\mathrm{obs}}.
\]
Taking the Fourier transform and using the fact that the Fourier transform of $G(T)$ is $ e^{-T \|\xi\|^2}$,
we deduce that
\begin{equation}
    \label{eq:fourier}
    \widehat{u_0}(\xi) = e^{T\|\xi\|^2}\,\widehat{u_{\mathrm{obs}}}(\xi).
\end{equation}
Since the multiplier \(e^{T\|\xi\|^2}\) grows exponentially with \(\|\xi\|\) and the terminal time $T$, even a small perturbation in the observed data \(u_{\mathrm{obs}}\) is greatly amplified when reconstructing \(u_0^*\). This exponential amplification of high-frequency errors demonstrates the ill-posedness of \eqref{pb:OC_pde}.

To mitigate this ill-posedness, we propose a moment-based method for the initial source identification problem.

\subsection{Moment method}
We replace the heat equation with a surrogate dynamically stable system, that preserves the essence of the evolutionary behavior of the original model while enabling more robust inversion.

We consider the dynamics of the moments of \( u(\cdot, t) \), defined by
\[
M_{\alpha}(t) = \int_{\mathbb{R}^d} x^{\alpha}\, u(x,t) \, dx, \quad \mathrm{for}\, t > 0,
\]
where \(\alpha =(\alpha_1,\ldots,\alpha_d) \in \mathbb{Z}_+^d\) and \(x^{\alpha} = x_1^{\alpha_1} \cdots x_d^{\alpha_d}\). In \cite{duoandikoetxea1992moments}, the authors show that for different initial distributions sharing the same moments up to a finite order, the difference between the corresponding solutions to \eqref{eq:heat} decays faster (as \( t \to \infty \)) than the individual solutions themselves.

Motivated by this result, we propose to recover \( u_0^* \) from the moments of the terminal distribution. 

Fix \( k \in \mathbb{Z}_+ \), the order of the moments under consideration. Given observations \( y_{\alpha} \approx M_{\alpha}(T) \) for all multi-indices \(\alpha\) with \(\|\alpha\|_1 \leq k\), our goal is to reconstruct an accurate approximation of the initial condition \( u_0^* \). The effective means of  numerically observing  $y_{\alpha}$ are presented later in the next subsection. 

To implement this methodology, we propose a natural two-step strategy to reconstruct the unknown initial condition \( u_0^* \) -- an accurate approximation in practice -- from the given measurements \( \{y_{\alpha}\}_{\|\alpha\|_1 \leq k} \):

\begin{enumerate}    \item \textbf{Moment Estimation:}
    Compute the initial moments \( \{M_{\alpha}(0)\}_{\|\alpha\|_1 \leq k} \) from the terminal observations \( \{y_{\alpha}\}_{\|\alpha\|_1 \leq k} \):
    \begin{equation*}
        M_{\alpha}(T) \approx y_{\alpha} \quad \Rightarrow \quad M_{\alpha}(0),
    \end{equation*}
by inverting the dynamics of the moments.

    \item \textbf{Initial Condition Reconstruction:}
    Recover the atomic initial datum \( u_0^* \) from the estimated moments \( \{M_{\alpha}(0)\} \):
    \begin{equation*}
        \{M_{\alpha}(0)\} \quad \Rightarrow \quad u_0 \approx u_0^*,
    \end{equation*}
    through a convex optimization problem.
\end{enumerate}

In the following paragraphs, we detail these two main steps of the methodology and highlight their advantages over direct approaches to the original inverse problem.

\subsubsection{Inverting the dynamics of the moments.}The moments \( M(t) = (M_\alpha(t))_{\|\alpha\|_1 \leq k} \) satisfy a finite-dimensional system of linear ordinary differential equations (ODE) driven by a constant coefficient matrix $A$ of dimension $\binom{k + d}{d} \times \binom{k + d}{d}$. To simplify the notation, we will omit the index $k$ in the notation of $A$. Of course, $\binom{k + d}{d}$ coincides with the number of moments corresponding to $\|\alpha\|_1 \leq k$.

More precisely, let \( e_i \) denote the \( i \)-th canonical basis vector of \( \mathbb{R}^d \). Then, the ODE governing \( M(t) \) is given by (for a more rigorous formulation, see Lemma \ref{lem:ODE}): 
\begin{equation}\label{eq:ODE_moments}
  \frac{d M(t)}{dt} = A \, M(t), \quad \mathrm{where} \ A_{\alpha,\alpha'} = 
  \left\{
    \begin{array}{ll}
      \alpha_i(\alpha_i - 1), & \mathrm{if} \ \alpha' = \alpha - 2 e_i, \\[0.6em]
      0, & \mathrm{otherwise},
    \end{array}
  \right.
\end{equation}
where $A_{\alpha,\alpha'}$ denotes the $(\alpha,\alpha')$-entry of the matrix $A$.
An important property of the matrix \( A \), which is independent of the specific moments of the data under consideration and depends only on the pair $(k,d)$, is that it is \textit{nilpotent} (see Lemma~\ref{lem:growth}), namely:  
\begin{equation*}
    A^{\lfloor k/2 \rfloor +1} = 0.
\end{equation*}
As a consequence, this leads to a polynomial dynamics fulfilling
\begin{equation}\label{eq:moments_grow}
    \|M(0)\|_{\infty}\leq  \|M(T)\|_{\infty}\, \sum_{i=1}^{\lfloor k/2 \rfloor} c_i \, T^{i} ,
\end{equation}
for some constants \( c_i \) independent of \( T \) and only depending on $A$. Here and in the sequel \(\|\cdot\|_{\infty}\) stands for the induced matrix norm on \(\ell^{\infty}\). We refer to Lemma~\ref{lem:growth} for the precise formula of the constants $c_i$.

Comparing \eqref{eq:moments_grow} with \eqref{eq:fourier}, we observe a fundamental difference in error growth: 
the moment inversion error grows at most \emph{polynomially} with $T$, 
in  contrast to the \emph{exponential} error growth in \eqref{eq:fourier}. 
This suggests that backwards solving the ODE system \eqref{eq:ODE_moments} may  offer significantly greater numerical stability than directly addressing the original problem \eqref{pb:OC_pde}.

This is the starting point of our method. But, as mentioned before, a second key step that we describe below, consists of extracting the atomic initial measure out of the recovered moments.

\subsubsection{Recovering initial distribution from its moments.} The reconstructed moments $\{M_{\alpha}(0)\}_{|\alpha| \leq k}$ characterize the initial distribution $u_0$ only up to order $k$, leaving the full recovery under-determined, since infinitely many distributions share these moments.

To determine the sparse initial datum  (as in \eqref{eq:Dirac}), we adopt the \emph{minimum total variation principle}: among all distributions matching the given moments, we select the one with minimal total variation norm.

Furthermore, we restrict the support of the initial data under consideration to a compact domain $\Omega \subseteq \mathbb{R}^d$, which acts as an a priori estimate of $\mathrm{supp}(u_0^*)$. This compactness assumption serves two key purposes:
\begin{itemize}
    \item It prevents dispersion towards infinity and non-physical solutions with unbounded support;
    \item It enables the stable numerical discretization.
\end{itemize}

The resulting optimization problem to recover an efficient approximation of $u_0^*$ writes
\begin{equation}\label{pb:opt_moments}
    \inf_{u_0 \in \mathcal{M}(\Omega)} \|u_0\|_{\mathrm{TV}} 
    \quad \mathrm{subject\, to}\;  
        \int_{\mathbb{R}^d} x^{\alpha}  d\, u_0(x) = M_{\alpha}(0),  \quad \mathrm{for}\, \|\alpha\|_1 \leq k.
\end{equation}

As mentioned above, $\Omega$ is a hyper-parameter of this optimization problem, which plays the role of a priori bound for the support of the true initial distribution $u_0^*$. By \( \mathcal{M}(\Omega) \) we denote the space of Radon measures on \( \Omega \). 

For the existence of a solution of \eqref{pb:opt_moments}, it suffices that \( \Omega \) has non-empty interior (see Theorem~\ref{thm:existence}).  
For quantitative convergence to the true source \( u_0^* \), we take \( \Omega \) to be a hypercube containing the support of \( u_0^* \) (see Theorem~\ref{thm:rate}).

\subsubsection{Overall formulation.} \quad  By combining \eqref{eq:ODE_moments} with the moment-constrained optimization problem \eqref{pb:opt_moments}, we derive the following compact formulation of the moment reconstruction problem: Fix the terminal time \( T \), observe the moments of the measure solution at time \( T \), and then solve the optimization problem
\begin{equation}\label{pb:OC}\tag{P$_{\Omega,k}$}
   \inf_{u_0 \in \mathcal{M}(\Omega)} \|u_0\|_{\mathrm{TV}} 
  \quad \mathrm{subject\ to} \quad  
    \int_{\mathbb{R}^d} x^{\alpha} \, \mathrm{d}u_0(x) = (e^{-TA}\, \mathbf{y})_{\alpha}, 
    \quad \mathrm{for} \ \|\alpha\|_1 \leq k.
\end{equation}
Here, \(A\) is defined in \eqref{eq:ODE_moments} and \(\mathbf{y} = (y_{\alpha})_{\|\alpha\|_1\leq k}\) are observations of moments up to order $k$ at time $T$. Problem \eqref{pb:OC} depends crucially on two key parameters:
\begin{itemize}
    \item The compact domain \( \Omega \subseteq \mathbb{R}^d \) for support restriction:  
This parameter is independent of the terminal time \( T \) and the moment order \( k \). In our main result, \( \Omega \) is fixed as the hypercube \( [-R, R]^d \) aimed to contain the support of \( u_0^* \). When no prior information about the support is available, one can begin by selecting a sufficiently large \( R \)  and gradually decrease \( R \)   until a sharp increase on the minimum value of \eqref{pb:OC} is observed, indicating that the support of \( u_0^* \) has likely been well captured.
 
    \item The moment order $k \in \mathbb{Z}_+$ governing approximation accuracy: The optimal choice of this parameter depends on the observation error; see Remark~\ref{rem:moment_opt} for an explicit asymptotic order. If the error level cannot be determined a priori, one practical strategy is to gradually increase the value of  \( k \) starting from $k=0$, and monitor the total variation of the solution. When a sharp increase occurs, it typically signals that the moment information is no longer reliable beyond that threshold of $k$.

\end{itemize}

Besides the stability benefits discussed earlier, another important advantage of proceeding by solving  \eqref{pb:OC} is that it admits a solution in the form of a finite sum of Dirac measures \eqref{eq:Dirac}; see Theorem~\ref{thm:existence}.  This follows from classical results known as \textit{Representer Theorems} \cite{fisher1975spline}.

The process for tackling  \eqref{pb:OC}, which consists first in inverting \eqref{eq:ODE_moments} and then solving \eqref{pb:opt_moments}, as described earlier, is shown in the following diagram for clarity.
  \begin{center}
\begin{tikzpicture}[->, >=Stealth, auto]
    \node (A) at (0,0) {$y_{\alpha}$};
    \node (B) at (2.5,2.5) {$M_{\alpha}(0)$};
    \node (C) at (5,0) {$u_0 $};
    
    \draw[->, thick] (A) -- node[left] {Inverting \eqref{eq:ODE_moments}} (B);
    \draw[->, thick] (B) -- node[right] {Solving \eqref{pb:opt_moments}} (C);
    \draw[->] (A) to node[below] {Solving \eqref{pb:OC}} (C);
    
 \node[left=3mm] at (A) { $M_{\alpha}(T)\, \approx  $};
    \node[right=3mm] at (C) {$\approx \, u_0^{*}$};
\end{tikzpicture}      
  \end{center} 

\subsection{Main results}  
\subsubsection{Main contributions.}Our main contributions to the theoretical analysis of problem \eqref{pb:OC} are summarized as follows:
\begin{enumerate}\setlength{\itemsep}{5pt} 
    \item \textbf{(Existence of atomic solutions)} In Theorem \ref{thm:existence} we first establish the existence of an atomic solution to \eqref{pb:OC} of the form \begin{equation}\label{intro_eq:extreme_point}
        u^k_0 = \sum_{i=1}^{\binom{k + d}{d}} m_i \delta_{x_i}.
    \end{equation}
by leveraging results from representer theorems. The number of masses in the sparse atomic reconstruction is exactly the same as the number of moments under consideration. This existence result holds for any observed data \( \mathbf{y} = (y_{\alpha})_{\|\alpha\|_1\leq k} \).
    
    \item \textbf{(Error estimate)} Next, in Theorem \ref{thm:rate}, assuming that \( u_0^* \) is compactly supported within \( \Omega = [-R, R]^d \), we establish an error estimate that quantifies the difference between the recovered solution \( u_0^k \) and the true initial distribution \( u_0^* \). This difference, measured by the Kantorovich distance (see Definition \ref{def:kant}), is given by:
    \begin{equation}\label{intro_eq:conv}
       \| u_0^{k}- u_0^{*}\|_{\mathrm{Kant}} \leq C_1 \left(\frac{  \|u_0^*\|_{\mathrm{TV}}}{k} +  k^{k/2}\,e^{C_2\,k} \, \max\left\{ T^{\lfloor k/2\rfloor} \, , \,1 \right\} \,\|\epsilon\|_{\infty}\right),
    \end{equation}
    where \( C_1 \) and \(C_2\) are constants depending only on $d$ and $R$.  The variable \( \epsilon = (\epsilon_{\alpha})_{\|\alpha\|_1\leq k} \) represents the observation error of the moments of \( u(\cdot, T) \). 
The right-hand side of \eqref{intro_eq:conv} decomposes into two distinct components: 
(1) the first term is independent of $\epsilon$ and decays at the rate $1/k$ as the moment order $k \to \infty$; 
(2) the second term grows with $k$ (analogous to the frequency cutoff in Fourier analysis), but for any fixed $k$ it exhibits only polynomial growth in $T$, in sharp contrast to the \emph{exponential growth} appearing in \eqref{eq:fourier}.  
For a detailed discussion of the constants involved, as well as an optimal choice of $k$ balancing these two contributions, we refer to Remarks~\ref{rem:conv} and~\ref{rem:moment_opt}.
\end{enumerate}

Finally, we emphasize that, in the large-time regime, if the observational noise becomes comparable to or larger than the amplitude of the heat solution at time $T$, the moment errors necessarily dominate. In this case, the second term in \eqref{intro_eq:conv} becomes large, significantly degrading the quality of the reconstructed solution.

\subsubsection{Techniques used in the proof.}The proof of our main results combines techniques from several areas, including the representer theorem, optimal transportation, and polynomial approximation.

We apply the representer theorem from \cite{fisher1975spline} to show that the extreme points of the solution set of \eqref{pb:OC} are sums of Dirac measures. Optimization problems over measures, together with their associated representer theorems, play a fundamental role in classical inverse problems; see \cite{unser2017splines} and references therein. 
The core idea behind representer theorems is that the extreme points of the feasible set, defined by linearly independent test functions, take the form of finite sums of Dirac measures. In practice, when the test functions are monomials, this recovers the classical result known as Tchakaloff’s Theorem \cite{bayer2006tchakaloff}.

Notably, problem \eqref{pb:OC} also bears a strong resemblance to mean-field relaxation problems arising in the training of neural networks \cite{chizat2018global,mei2018mean,liu2025representation}. The use of representer theorems in such machine learning settings has recently drawn increasing attention; see, for example, \cite{bach2017breaking,unser2019representer,liu2025representation}.

To quantify the distance between the solution of \eqref{pb:OC} and the true initial distribution \( u_0^* \), we use the Kantorovich distance, as introduced in \cite{hanin1999kantorovich,piccoli2016properties}, which is also referred to as the flat metric in the literature. This metric generalizes the Wasserstein distance \cite[Sec.~6]{villani2009} from probability measures to signed measures. We discuss the connection between the Kantorovich and Wasserstein distances in Section~\ref{sec_wasserstein}. A key feature of the Kantorovich distance is that it is defined via duality with respect to test functions that are \(1\)-Lipschitz continuous, i.e.,
there exists $v^*$, with $\|v^*\|_{\mathcal{C}(\Omega)}\leq 1$ and Lip$(v^*)\leq 1$, such that 
    \begin{equation*}
         \| u_0^{k}- u_0^*\|_{\mathrm{Kant}} = \int_{\Omega} v^*\, d (u_0^{k}- u_0^*).  
    \end{equation*}
The test function \( v^* \) can be approximated by a degree-\( k \) polynomial \( p^k \) with an explicit convergence rate of \( 1/k \), as guaranteed by Jackson's Theorem \cite{newman1964jackson}. This allows us to decompose the integral into the sum of two terms:
\begin{enumerate}
    \item First term: \( \int_{\Omega} (v^* - p^k) \, d(u_0^k - u_0^*) \). Since the integrand is uniformly bounded by \( C/k \), it suffices to estimate the total variation of \( u_0^k - u_0^* \), which is controlled via Lemma~\ref{prop:a priori bound}.
    
    \item Second term: \( \int_{\Omega} p^k \, d(u_0^k - u_0^*) \). Since \( p^k \) is a polynomial of degree \( k \), its integral against \( u_0^k - u_0^* \) depends on two components: the coefficients of \( p^k \) and the difference between the moments of \( u_0^k \) and \( u_0^* \) up to order \( k \). The moment differences are controlled by the backward stability of the moment system; see \eqref{eq:moments_grow}. The coefficients of \( p^k \) are in turn governed by its \( L^\infty \)-norm in $\Omega$, via Bernstein's inequality; see Lemma~\ref{lem:poly_coef}.
\end{enumerate}

\subsection{Numerical implementation}
For the numerical solution of the initial source identification problem, we must first approximate the moments numerically and then design an appropriate algorithm to solve \eqref{pb:OC}.

The moments \(M_\alpha(T)\) are not directly observable; only pointwise measurements of \(u(\cdot,T)\) at sensor locations are available.  
    We approximate the moments using quadrature rules,
    \[
        M_\alpha(T) \approx y_\alpha = \sum_{j=1}^n w_j\, z_j^\alpha\, u(z_j,T),
    \]
    where \( (z_j,w_j)_{j=1}^n \) denote the quadrature nodes and weights.
    In our analysis, we assume that the sensor locations coincide with the quadrature nodes.
    In Section~\ref{sec:quadrature}, we compare two quadrature strategies: uniform grids and Gauss--Hermite nodes. Numerical results show that Gauss--Hermite quadrature performs significantly better, especially for large $T$. Moreover, in Remark~\ref{rem:total-error}, we provide an overall error estimate that accounts for pointwise observational noise at the Gauss--Hermite nodes.

Once the moments $y_{\alpha}$ are obtained using the quadrature formulas above, we proceed to solve \eqref{pb:OC}, a convex optimization problem posed in the infinite-dimensional space $\mathcal{M}(\Omega)$.
We adopt a \emph{discretize--then--optimize} approach. The domain \(\Omega\) is discretized into points \(\Omega_h=\{x_1,\dots,x_N\}\), and \eqref{pb:OC} is replaced by the finite-dimensional convex program
\begin{equation}\label{intro_pb:OC_dis}
    \inf_{\mathbf{m}\in \mathbb{R}^N} \|\mathbf{m}\|_{\ell^1} 
    \quad
    \mathrm{subject\; to}
    \quad
    B\, \mathbf{m} \;=\; e^{-T A}\, \mathbf{y},
\end{equation}
    where \(B\in \R\) is the rectangular Vandermonde matrix with entries \(b_{\alpha,i}=x_i^\alpha\).  
    Under mild assumptions, solutions exist (Lemma~\ref{lem:exist}) and can be computed efficiently using the \textit{Simplex} method.  
    
\color{black}

\subsection{Related work}
\subsubsection{Tikhonov regularization methods.}
Tikhonov regularization \cite{tikhonov1963solution} is a classical approach to mitigating the ill-posedness of the initial source identification problem. It adds a regularization term to \eqref{pb:OC_pde} to penalize high-frequency components of the solution.

In \cite{biccari2023two}, the authors introduce a combined penalty involving the \( L^1 \) and \( L^2 \) norms of \( u_0 \) in the formulation of \eqref{pb:OC_pde}. However, this penalization prevents the solution from being a sum of Dirac measures as in \eqref{eq:Dirac}. To address this limitation, they propose a two-stage method: first, solve the penalized optimization problem; then, detect the local maxima of the solution; and finally, determine the amplitudes of these maxima by solving a finite-dimensional convex problem. Although numerically efficient for small terminal times (e.g., \( T = 0.01 \) in \cite{biccari2023two}), this approach lacks reliable theoretical guarantees of convergence and is unstable for large $T$.

In contrast, \cite{casas2019using} replaces the \( L^p \) penalty with a TV norm penalty \( \|u_0\|_{\mathrm{TV}} \) for \eqref{pb:OC_pde}. The authors analyze the structure of the solution using first-order optimality conditions \cite[Thm.~3.1 and Thm.~4.4]{casas2019using}. When the minimizers of the adjoint solution are finite, this method recovers a sum of Dirac measures as in \eqref{eq:Dirac}. In our work, such sparsity arises naturally through the representer theorem and the finite observations of moments. The regularization formulations in \cite{casas2019using} are highly sensitive to the choice of penalty coefficients, and no efficient numerical algorithms are designed to solve these problems.

A further limitation of these Tikhonov-based methods on \eqref{pb:OC_pde} is their reliance on full knowledge of the terminal solution \( u(\cdot, T) \), which is unrealistic in practical scenarios where only discrete observations of \( u(x,T) \) at sensor locations are available. Our method addresses this by computing moments through quadrature of these pointwise values. The issue of limited observations is also noted in \cite{li2014heat}, where the authors adapt the formulation of \eqref{pb:OC_pde} using value residuals at sensor locations and impose an $L^1$ penalty. They prove convergence with respect to observation error (see \cite[Thm.~2.2]{li2014heat}), but the result holds only in one spatial dimension, assumes the true initial distribution \( u_0^* \) is a single Dirac measure, and does not explicitly characterize the dependence of the convergence rate on the terminal time \( T \). Our main result, Theorem~\ref{thm:rate}, improves on these limitations by providing an explicit error bound that scales polynomially with \( T \). In contrast to the fixed small terminal times used in \cite{biccari2023two} and \cite{li2014heat} (both with \( T = 0.01 \)), our method remains stable for terminal times up to \( T = 100 \) in two dimensions, offering robust performance across a much broader range of scenarios.
  
\subsubsection{Algorithms for the discretized problem.}
The optimization problem \eqref{intro_pb:OC_dis} is convex, and a variety of algorithms can be used to solve it. These include Bregman iteration \cite{osher2005iterative} and its variants \cite{goldstein2009split}, the Alternating Direction Method of Multipliers (ADMM) \cite{glowinski1975approximation}, and primal-dual methods \cite{chambolle2011first}. Following \cite{chen2001atomic}, we rewrite \eqref{intro_pb:OC_dis} as a linear programming problem.
Consequently, a natural and robust approach is to solve the discretized problem \eqref{intro_pb:OC_dis} using the simplex method. Importantly, the simplex method yields solutions located at extreme points of the feasible set (see \cite[Thm.~3.3]{bertsimas1997introduction}), which leads to sparse solutions with fewer support points in the recovered initial distribution.

\subsection{Organization and notations}
\subsubsection{Organization.}In Section~\ref{sec_main}, we establish the existence of solutions to the optimization problem \eqref{pb:OC} and analyze their convergence to the true initial condition 
\( u_0^* \) 
  under appropriate assumptions. Building on these theoretical foundations, Section~\ref{sec_dis} develops numerical methods for both moment observation and efficient solution of \eqref{pb:OC}. The effectiveness of our moment-based approach for initial source identification is then demonstrated through numerical experiments in Section~\ref{sec_num}. Finally, Section~\ref{sec_proof} collects the technical lemmas required for proving our main results.

\subsubsection{Notations.}We begin by fixing notation:
\begin{itemize}
    \item For $x \in \mathbb{R}^n$, $\|x\|_p$ denotes the standard $\ell^p$-norm
    \item $\lfloor a \rfloor$ represents the floor function for $a \in \mathbb{R}$
    \item $\binom{n}{m}$ is the binomial coefficient for $n,m \in \mathbb{N}$
\end{itemize}

Let $\Omega \subset \mathbb{R}^d$ be compact. We consider:
\begin{itemize}
    \item $\mathcal{C}(\Omega)$: The Banach space of continuous functions on $\Omega$ equipped with the supremum norm $\|\cdot\|_{\mathcal{C}(\Omega)}$
    \item $\mathcal{M}(\Omega)$: Its dual space, identified with Radon measures on $\Omega$ having finite total variation.\end{itemize}

\begin{defn}[Kantorovich Norm \cite{hanin1999kantorovich,piccoli2016properties}]\label{def:kant}
    For $u_0 \in \mathcal{M}(\Omega)$, the Kantorovich norm is
    \begin{equation}\label{eq:kant_norm}
        \|u_0\|_{\mathrm{Kant}} \; \triangleq  \max_{\begin{array}{c}
    v \in \mathcal{C}(\Omega),\\ 
    \|v\|_{\mathcal{C}(\Omega)} \leq 1,\\
    \mathrm{Lip}(v) \leq 1
  \end{array}} \int_{\Omega} v \, d\,u_0 ,
\end{equation}
    where $\mathrm{Lip}(v)$ denotes the Lipschitz constant of $v$. The maximum is indeed attained \cite[Eq.\,(4.2)]{hanin1999kantorovich}.
\end{defn}
In this article, we use the Kantorovich norm to quantify the difference between measures. This choice is motivated by the fact that the recovered initial distribution and the true one may have different total masses, in which case the classical Wasserstein distance is not well-defined. When the total mass is preserved, these two norms are equivalent, leading to  an error estimate in the Wasserstein distance, as detailed in Section~\ref{sec_wasserstein}.

\section{Main results}\label{sec_main}
The main results of this article are presented in Section~\ref{sec_theorems}. In Section~\ref{sec_moments}, we analyze the dynamics of moments for solutions to the heat equation; these results serve as preliminary tools for the proof of the main error estimates in Section~\ref{sec_proof_main}. Finally, in Section~\ref{sec_wasserstein}, we discuss the error estimates in the Wasserstein sense.

\subsection{Main results}\label{sec_theorems}
We first show the existence of the solutions of problem \eqref{pb:OC}. 

\begin{thm}[Existence]\label{thm:existence}
Assume that \( \Omega \) is compact and has a non-empty interior. Then, for any \( k \in \mathbb{Z}_+ \), any $T>0$  and any vector \( \mathbf{y} = (y_{\alpha})_{\|\alpha\|_1 \leq k} \in \mathbb{R}^{\binom{k + d}{d}} \), the following statements hold:
\begin{enumerate}
    \item The solution set of \eqref{pb:OC} is non-empty, convex, and compact in the weak-\(*\) topology;
    \item The extreme points of the solution set of \eqref{pb:OC} are of the form:
    \begin{equation}\label{eq:extreme_point}
        u^k_0 = \sum_{i=1}^{\binom{k + d}{d}} m_i \delta_{x_i},
    \end{equation}
    where \( m_i \in \mathbb{R} \), \( x_i \in \Omega \), and \( \delta_{x_i} \) are Dirac measures.
\end{enumerate}
\end{thm}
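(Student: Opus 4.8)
The plan is to treat the two assertions separately: the topological statement (1) by the direct method of the calculus of variations combined with the Banach--Alaoglu theorem, and the structural statement (2) by combining the Krein--Milman theorem with a perturbation (representer-theorem) argument in the spirit of \cite{fisher1975spline}. Throughout I would write $q = \binom{k+d}{d}$ for the number of constraints and let $\Phi : \mathcal{M}(\Omega) \to \mathbb{R}^{q}$, $\Phi(u_0) = \big(\int_\Omega x^\alpha\, du_0\big)_{\|\alpha\|_1 \le k}$, be the (weak-$*$ continuous) moment map, so that \eqref{pb:OC} reads $\inf\{\|u_0\|_{\mathrm{TV}} : \Phi(u_0) = b\}$ with $b = e^{-TA}\mathbf{y}$.

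First I would settle feasibility. Since $\Omega$ has non-empty interior it contains an open ball, and a nonzero polynomial cannot vanish on an open set; hence the monomials $\{x^\alpha\}_{\|\alpha\|_1 \le k}$ are linearly independent on $\Omega$. Testing $\Phi$ against Dirac masses $\delta_x$, $x \in \Omega$, this independence shows that no nonzero covector annihilates the range of $\Phi$, so $\Phi$ is surjective and the feasible set is non-empty for every $b$. Convexity of the solution set is immediate, as $\|\cdot\|_{\mathrm{TV}}$ is convex and the constraint is affine. For existence I would run the direct method: a minimizing sequence $(u_0^n)$ is bounded in total variation, so by Banach--Alaoglu (and separability of $\mathcal{C}(\Omega)$ for compact metric $\Omega$) it has a weak-$*$ convergent subsequence; the constraints pass to the limit because each $x^\alpha \in \mathcal{C}(\Omega)$, and $\|\cdot\|_{\mathrm{TV}}$ is weak-$*$ lower semicontinuous, so the limit is a minimizer. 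Finally the solution set equals $\{\Phi(u_0) = b\} \cap \{\|u_0\|_{\mathrm{TV}} \le V^*\}$ with $V^*$ the optimal value; the first set is weak-$*$ closed and the second is weak-$*$ compact by Banach--Alaoglu, so their intersection is weak-$*$ compact, proving (1).

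For (2), Krein--Milman guarantees that the (non-empty, convex, weak-$*$ compact) solution set has extreme points, and I would show any such extreme point $u^*$ is atomic with at most $q$ atoms. The mechanism is a perturbation argument: writing $u^* = \sigma\,|u^*|$ with $\sigma = \pm 1$ the polar density, I seek $h \in L^\infty(|u^*|)$, $h \not\equiv 0$, in the kernel of the $q$ functionals $h \mapsto \int x^\alpha h\, d|u^*|$, and set $\mu = h\,|u^*|$, so that $\Phi(\mu)=0$. Optimality then forces $\int \sigma h\, d|u^*| = 0$ automatically: otherwise $u^* + s\mu$ would be feasible with $\|u^*+s\mu\|_{\mathrm{TV}} = V^* + s\int \sigma h\, d|u^*|$ for small $s$, and a suitable sign of $s$ would push the total variation below $V^*$, a contradiction. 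Hence for $|s|$ small enough $u^* \pm s\mu$ are two distinct feasible measures of total variation exactly $V^*$, exhibiting $u^*$ as a nontrivial midpoint and contradicting extremality. Such a nonzero $h$ exists whenever the perturbation space is too large for the $q$ constraints: if $|u^*|$ had a non-atomic part then $L^\infty(|u^*|)$ is infinite-dimensional, while if $u^* = \sum_{i=1}^p c_i\delta_{x_i}$ with $p > q$ then the $q \times p$ Vandermonde system $\sum_i x_i^\alpha t_i = 0$ has a nontrivial solution. Both cases are excluded, leaving exactly the atomic form \eqref{eq:extreme_point} with at most $q = \binom{k+d}{d}$ masses.

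The delicate point, where I would spend the most care, is the coupling between TV-optimality and the perturbation: one must check that the expansion $|\sigma + s h| = 1 + s\,\sigma h$ holds $|u^*|$-a.e. for small $s$ (using $|\sigma| = 1$ and $h \in L^\infty$), that the stationarity $\int \sigma h\, d|u^*| = 0$ is genuinely \emph{implied} by optimality rather than imposed as an extra constraint (this is precisely what keeps the atom count at $q$ rather than $q+1$), and that the non-atomic part can indeed be perturbed within $L^\infty(|u^*|)$. Alternatively, all of this is packaged in the abstract representer theorem of \cite{fisher1975spline}, which I would cite to conclude directly once (1) provides the requisite compactness; the self-contained argument above is essentially its proof specialized to the monomial test functions, i.e.\ the classical Tchakaloff setting \cite{bayer2006tchakaloff}.
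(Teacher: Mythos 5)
Your proposal is correct, and its endpoint coincides with the paper's proof, which simply observes that the non-empty interior of $\Omega$ makes the monomials $\{x^\alpha\}_{\|\alpha\|_1\le k}$ linearly independent in $\mathcal{C}(\Omega)$ and then invokes the Fisher--Jerome representer theorem \cite{fisher1975spline} (restated as Theorem~\ref{thm:representation}). What you do differently is twofold. First, you explicitly verify feasibility of \eqref{pb:OC} for \emph{every} right-hand side $b=e^{-TA}\mathbf{y}$ by testing the moment map against Dirac masses and using that a nonzero polynomial cannot vanish on an open set; this is a genuine point of care, since the representer theorem as stated in the paper \emph{assumes} the feasible set is non-empty, and the paper's one-line proof leaves this surjectivity step implicit. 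Second, you unpack the representer theorem into a self-contained argument: direct method plus Banach--Alaoglu for part (1), and the polar-decomposition perturbation $u^*\pm s\,h\,|u^*|$ with $h$ in the kernel of the $q=\binom{k+d}{d}$ moment functionals for part (2), correctly deriving the stationarity $\int\sigma h\,d|u^*|=0$ from optimality rather than imposing it, which is exactly what keeps the atom count at $q$. The paper's route buys brevity by outsourcing everything to \cite{fisher1975spline}; yours buys transparency and closes the small feasibility gap, at the cost of reproducing a known proof. Both are sound.
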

\begin{proof}
Since \(\Omega\) has nonempty interior, the monomials \(\{x^{\alpha}\}_{\|\alpha\|_1\leq 1}\) are linearly independent in \(\mathcal{C}(\Omega)\). Therefore, the conclusions of Theorem \ref{thm:existence} follow from the Representer Theorem  in \cite{fisher1975spline} (see also Theorem \ref{thm:representation}).
\end{proof}

Next, we bound the error between the solution of \eqref{pb:OC} and the true initial distribution \(u_0^*\) in terms of the moment order \(k\) and the measurement error.  To quantify this discrepancy, we use the Kantorovich norm (see Definition~\ref{def:kant}).

\begin{thm}[Error estimate]\label{thm:rate}
Assume $u_0^*$ is a compactly supported Radon measure in $\R^d$ with a finite total variation and that \((u, u_0^*)\) satisfies the heat equation \eqref{eq:heat}.
Let $R>0$ be such that 
\[\mathrm{supp}(u_0^*) \subseteq \Omega \triangleq [-R,R]^d.
\]
Fix any \( k \in \mathbb{Z}_+^* \) and let
\[
\mathbf{y} = (y_{\alpha})_{\|\alpha\|_1 \leq k} \in \mathbb{R}^d
\]
denote the vector of observed moments of $u(\cdot,T)$. The observation error is given by
\[
\epsilon = (\epsilon_{\alpha})_{\|\alpha\|_1 \leq k}, \quad \mathrm{with} \quad \epsilon_{\alpha} = y_{\alpha} - \int_{\mathbb{R}^d} x^{\alpha}\, u(x,T) \, dx.
\]
Let \( u_0^k \) be any solution of \eqref{pb:OC}. Then, we have
\begin{equation}\label{eq:conv}
\|u_0^k - u_0^*\|_{\mathrm{Kant}} \leq \frac{C_d\, R\, \|u_0^*\|_{\mathrm{TV}}}{k} + C_{d,R} (k)\; \max\{ T^{\lfloor k/2\rfloor},1\} \,\|\epsilon\|_{\infty},
\end{equation}
where \(C_d\) is a constant, depending only on \(d\), and
\begin{equation}\label{grosseconstante}
C_{d,R} (k) =  \left( \sqrt{\frac{k}{\pi}} + \frac{C_d \, R}{ \sqrt{\pi\, k}}\right) \,  \, \exp\Bigg( k\left(1+ 2d/R +  \ln \sqrt{k} \right)\Bigg) .
\end{equation}
\end{thm}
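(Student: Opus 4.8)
The plan is to exploit the dual (Lipschitz) characterization of the Kantorovich norm from Definition~\ref{def:kant} together with a polynomial approximation of the optimal test function. By \cite[Eq.~(4.2)]{hanin1999kantorovich} the maximum in \eqref{eq:kant_norm} is attained, so there exists $v^*\in\mathcal{C}(\Omega)$ with $\|v^*\|_{\mathcal{C}(\Omega)}\le 1$ and $\mathrm{Lip}(v^*)\le 1$ such that
\[
\|u_0^k-u_0^*\|_{\textnormal{Kant}}=\int_\Omega v^*\,d(u_0^k-u_0^*).
\]
Since $v^*$ is $1$-Lipschitz on the cube $\Omega=[-R,R]^d$, Jackson's Theorem \cite{newman1964jackson} provides a polynomial $p^k$ of degree at most $k$ with $\|v^*-p^k\|_{\mathcal{C}(\Omega)}\le C_d R/k$. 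I then split
\[
\int_\Omega v^*\,d(u_0^k-u_0^*)=\underbrace{\int_\Omega (v^*-p^k)\,d(u_0^k-u_0^*)}_{\mathrm{(I)}}+\underbrace{\int_\Omega p^k\,d(u_0^k-u_0^*)}_{\mathrm{(II)}},
\]
and estimate the two terms separately; (I) will produce the first summand in \eqref{eq:conv}, while (II) produces the second.

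For term (I), the uniform bound on $v^*-p^k$ gives $|\mathrm{(I)}|\le (C_d R/k)\,\|u_0^k-u_0^*\|_{\textnormal{TV}}$. The total variation of the difference is controlled by the a priori estimate of Lemma~\ref{prop:a priori bound}, which bounds $\|u_0^k-u_0^*\|_{\textnormal{TV}}$ by a constant multiple of $\|u_0^*\|_{\textnormal{TV}}$. A point to handle carefully here is that when $\epsilon\neq 0$ the true datum $u_0^*$ is not exactly feasible for \eqref{pb:OC}, so the comparison of total variations must rely on the quantitative stability furnished by that lemma rather than on a naive optimality argument. This yields the first summand $C_d R\,\|u_0^*\|_{\textnormal{TV}}/k$ of \eqref{eq:conv}.

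Term (II) is where the moment dynamics and their backward stability enter. Writing $p^k(x)=\sum_{\|\alpha\|_1\le k}c_\alpha x^\alpha$, one has
\[
\mathrm{(II)}=\sum_{\|\alpha\|_1\le k}c_\alpha\Big(\int_\Omega x^\alpha\,du_0^k-\int_\Omega x^\alpha\,du_0^*\Big).
\]
The moments of the optimizer $u_0^k$ equal $(e^{-TA}\mathbf{y})_\alpha$ by the constraint in \eqref{pb:OC}, while those of $u_0^*$ equal $M_\alpha(0)=(e^{-TA}M(T))_\alpha$ by Lemma~\ref{lem:ODE}; since $\mathbf{y}=M(T)+\epsilon$, the moment difference collapses exactly to $(e^{-TA}\epsilon)_\alpha$. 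Hence $|\mathrm{(II)}|\le\big(\sum_{\|\alpha\|_1\le k}|c_\alpha|\big)\,\|e^{-TA}\epsilon\|_\infty$. I bound the coefficient sum in terms of $\|p^k\|_{\mathcal{C}(\Omega)}$ via Bernstein's inequality (Lemma~\ref{lem:poly_coef}), using $\|p^k\|_{\mathcal{C}(\Omega)}\le 1+C_d R/k$, and I bound $\|e^{-TA}\epsilon\|_\infty\le\|e^{-TA}\|_\infty\,\|\epsilon\|_\infty$ through the nilpotency of $A$ from Lemma~\ref{lem:growth}, which contributes the polynomial factor $\max\{T^{\lfloor k/2\rfloor},1\}$ as in \eqref{eq:moments_grow}. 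Multiplying the coefficient bound by the operator-norm bound produces the explicit constant $C_{d,R}(k)$ of \eqref{grosseconstante}.

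The main obstacle is the sharp control of the coefficient sum $\sum_{\alpha}|c_\alpha|$ purely in terms of $\|p^k\|_{\mathcal{C}(\Omega)}$ on the multidimensional cube. It is precisely at this step that the super-exponential factor $\exp(k\ln\sqrt{k})=k^{k/2}$ and the $e^{C_d k}$ contributions in \eqref{grosseconstante} originate, reflecting the intrinsic cost of converting a uniform polynomial bound into a bound on monomial coefficients through Bernstein/Markov-type inequalities. Carrying the dependence on $d$ and $R$ faithfully through this conversion, and matching it against the nilpotent growth of $e^{-TA}$, is the delicate part of assembling the final constant; by contrast, the Kantorovich duality and Jackson approximation steps are routine once $v^*$ and $p^k$ are in hand.
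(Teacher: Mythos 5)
Your overall architecture — Kantorovich duality, Jackson approximation of the optimal $1$-Lipschitz test function, the splitting into (I) and (II), the collapse of the moment difference to $(e^{-TA}\epsilon)_\alpha$, and the combination of Bernstein's coefficient bound with the nilpotent growth of $e^{-TA}$ — is exactly the paper's proof, and your treatment of term (II) is correct as written.

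The genuine gap is in term (I). You assert that Lemma~\ref{prop:a priori bound} ``bounds $\|u_0^k-u_0^*\|_{\textnormal{TV}}$ by a constant multiple of $\|u_0^*\|_{\textnormal{TV}}$,'' but that lemma bounds $\|u_0^k\|_{\textnormal{TV}}$ by $\|\mathbf{y}\|_{\infty}$ times a large $(k,T)$-dependent factor; it says nothing of the form $\|u_0^k\|_{\textnormal{TV}}\lesssim\|u_0^*\|_{\textnormal{TV}}$, and no such bound holds when $\epsilon\neq0$ (the minimal-TV measure matching the perturbed moments can have total variation of order $\|\epsilon\|_\infty\,T^{\lfloor k/2\rfloor}$ times a super-exponential constant in $k$, independently of $\|u_0^*\|_{\textnormal{TV}}$). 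You correctly flag that $u_0^*$ is infeasible for \eqref{pb:OC} when $\epsilon\neq0$, but the resolution is not ``quantitative stability'' of the optimizer; it is the construction of an auxiliary measure $\tilde u_0\in\mathcal{M}(\Omega)$ whose moments are exactly $(e^{-TA}\epsilon)_\alpha$ and whose total variation is controlled by $\|\epsilon\|_\infty$ times $\tilde C_{d,R}(k)\max\{T^{\lfloor k/2\rfloor},1\}$ (this is what Lemma~\ref{lem:moment_exist} together with the moment-growth bound actually furnishes). Then $\tilde u_0+u_0^*$ is feasible, minimality gives $\|u_0^k\|_{\textnormal{TV}}\le\|u_0^*\|_{\textnormal{TV}}+\|\tilde u_0\|_{\textnormal{TV}}$, and hence $\|u_0^k-u_0^*\|_{\textnormal{TV}}\le 2\|u_0^*\|_{\textnormal{TV}}+\|\tilde u_0\|_{\textnormal{TV}}$. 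Consequently term (I) does \emph{not} produce only the first summand of \eqref{eq:conv}: it also contributes $\tfrac{C_dR}{2k}\|\tilde u_0\|_{\textnormal{TV}}$, which is an $\|\epsilon\|_\infty$-proportional term of the same shape as (II) and is precisely the origin of the $\tfrac{C_dR}{\sqrt{\pi k}}$ piece of $C_{d,R}(k)$ in \eqref{grosseconstante}. As written, your argument for (I) relies on a false bound and, even if repaired to the trivial $\|u_0^k-u_0^*\|_{\textnormal{TV}}\le\|u_0^k\|_{\textnormal{TV}}+\|u_0^*\|_{\textnormal{TV}}$ with Lemma~\ref{prop:a priori bound} applied to $\mathbf{y}$ itself, would leave a term proportional to $\|\mathbf{y}\|_\infty$ (not $\|\epsilon\|_\infty$) that does not vanish as $\epsilon\to0$ and would destroy the estimate. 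The auxiliary-competitor construction is the missing idea.
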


\begin{proof}
    The proof is presented in Section \ref{sec_proof_main}.
\end{proof}

\begin{rem}[Error estimate analysis]\label{rem:conv}
 The right-hand side of the estimate \eqref{eq:conv} consists of two terms:
\begin{equation*}
    \frac{C_d\, R\, \|u_0^*\|_{\mathrm{TV}}}{k}   \quad \mathrm{and} \quad   C_{d,R} (k)\; \max\{ T^{\lfloor k/2\rfloor},1\} \,\|\epsilon\|_{\infty}.
\end{equation*} 
Let us analyze each term:
\begin{enumerate}
    \item The first term is independent of the terminal time \( T \) and the observation error \( \epsilon \). It decays to zero at a rate of \( 1/k \). Therefore, we represent it as
    \begin{equation*}
        \frac{C_0}{k},
    \end{equation*}
    where \( C_0 \) is a constant independent of \( k \), \( T \), and \( \epsilon \).

\item For the second term, recalling the definition of \( C_{d,R}(k) \) from \eqref{grosseconstante}, we obtain the following leading-order expression:
    \begin{equation*}
        C_1\,(T\,k)^{k/2} e^{C_2 k} \|\epsilon\|_{\infty},
    \end{equation*}
    where \( C_1 \) and \( C_2 \) are constants independent of \( k \), \( T \), and \( \epsilon \). 
\begin{itemize}
    \item Fixing \( k \), we observe that the prefactor of \( \|\epsilon\|_{\infty} \) grows polynomially with \( T \), which marks a significant improvement over the classical ill-posedness in time illustrated in \eqref{eq:fourier}, where the error increases exponentially with time. 
    \item The moment order \( k \) plays a role analogous to the frequency amplitude in the Fourier analysis of the inverse problem for \eqref{eq:heat}. For large \( k \), achieving a small reconstruction error at final time requires increasingly accurate moment observations\ -\ that is, a smaller observation error \( \epsilon \). This reflects the inherent ill-posedness of the backward heat equation in the high-frequency regime.
\end{itemize}
\end{enumerate}
\end{rem}

\begin{rem}[Optimal choice of the moment order $k$]\label{rem:moment_opt}
Let the final time be~$T>2$, and assume that the observation errors for all moments (until infinity) satisfy
\begin{equation}\label{eq:constraint}
   \delta_1 \;\le\; \max_{\alpha\in\mathbb{Z}_+^d}|\epsilon_\alpha|
   \;\le\;\delta_2,
   \quad \mathrm{for\, some \,} 0<\delta_1\le\delta_2<1/(2e).
\end{equation}
As noted in Remark~\ref{rem:conv}, the first term in the convergence estimate \eqref{eq:conv} decays in~$k$, whereas the second grows.  Hence the optimal moment order~$k^*$ is determined by balancing
\[
   \frac{C_0}{k}
   \;=\;
   C_1\,(T\,k)^{k/2}\,e^{C_2k}\,\|\epsilon\|_{\infty},
\]
a transcendental equation in~$k$.  Under the constraint \eqref{eq:constraint}, its unique solution satisfies the following bounds:
\begin{equation}\label{eq:optimal_k}
   \frac{c_1\,\ln(1/\delta_2)}%
        {\ln T \;+\;\ln\ln(1/\delta_2)}
   \;\le\; k^*
   \;\le\;
   \frac{c_2\,\ln(1/\delta_1)}%
        {\ln T \;+\;\ln\ln(1/\delta_1)},
\end{equation}
where $c_1,c_2>0$ are constants independent of~$\delta_i$ and~$T$.  Substituting this $k^*$ back into \eqref{eq:conv} yields
\begin{equation}\label{eq:optimal_value}
   \bigl\|u_0^{k^*}-u_0^*\bigr\|_{\mathrm{Kant}}
   \;\le\;
   \frac{c_3\bigl(\ln T + \ln\ln(1/\delta_2)\bigr)}%
        {\ln(1/\delta_2)},
\end{equation}
for some constant $c_3$ also independent of~$\delta_i$ and~$T$.  From these estimates, we observe:
\begin{itemize}
  \item For fixed $T$, as the maximal error bound $\delta_2\to0$, the optimal order $k^*\to\infty$ and the reconstruction error vanishes.
  \item If $T$ increases so that the minimal error bound $\delta_1$ typically grows, then $k^*$ decreases and the reconstruction error increases, in agreement with the numerical results in Table~\ref{tab:noise-vs-T}. 
  In other words, for large final time $T$, achieving an accurate reconstruction requires the observational noise on the moments to be very small. Since the moments are computed numerically via quadrature, this translates into requiring the sensor noise to be much smaller than the amplitude of the heat solution at the sampled points.
\end{itemize}
\end{rem}

\subsection{The dynamic system of moments}\label{sec_moments}
This subsection is dedicated to a rigorous description of the ODE satisfied by the moments of the solution to the heat equation. This ODE plays an essential role in the proof of Theorem~\ref{thm:rate}.

Throughout this subsection, we fix $\Omega$ as a compact subset of $\mathbb{R}^d$. For any initial data \( u_0 \in \mathcal{M}(\Omega) \), the associated solution $u$ of the heat equation is given explicitly by:
\begin{equation}
    \label{eq:heat_u_0}
    u(x,t) = \bigl(G(\cdot,t) * u_0\bigr)(x), 
\quad \mathrm{for\, } (x,t) \in \mathbb{R}^d \times \mathbb{R}_+,
\end{equation}
where \(G(x,t)\) denotes the heat kernel:
\begin{equation}\label{eq:kernel}
    G(x,t) 
    = \frac{1}{(4\pi t)^{d/2}} 
      \exp \Bigl(-\frac{\|x\|^2}{4t}\Bigr),
    \quad (x,t) \in \mathbb{R}^d \times \mathbb{R}_+.
\end{equation}
For any $\alpha\in \mathbb{Z}_+^d $ and $t\geq 0$, define
   \begin{equation*}
  M_{\alpha}(t) \;
  \triangleq
  \left\{
    \begin{array}{ll}
      \displaystyle \int_{\mathbb{R}^d} x^{\alpha} \, u(x,t) \, \mathrm{d}x, & \quad \mathrm{if}\ t > 0, \\[0.6em]
      \displaystyle \int_{\mathbb{R}^d} x^{\alpha} \, \mathrm{d}u_0(x), & \quad \mathrm{if}\ t = 0.
    \end{array}
  \right.
\end{equation*}
The following holds:

\begin{lem}[The ODE system of moments]\label{lem:ODE}
    Fix any \( k \in \mathbb{Z}_+ \) and define
\[
M(t) \triangleq \left( M_\alpha(t) \right)_{\|\alpha\|_1 \leq k}, \quad t \geq 0.
\]
Then, we have
\begin{equation}\label{eq:ODE}
  \left\{
    \begin{array}{ll}
      \displaystyle \frac{d M(t)}{d t} = A \, M(t), & \quad \mathrm{for}\ t > 0, \\[0.6em]
      M(0) = \displaystyle \lim_{t \to 0^+} M(t), &
    \end{array}
  \right.
\end{equation}
with \( A \) as in \eqref{eq:ODE_moments}.
\end{lem}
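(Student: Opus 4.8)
The plan is to differentiate the moments under the integral sign, convert the time derivative into a spatial Laplacian via the heat equation, and then transfer the Laplacian onto the monomial test function by integrating by parts twice. Throughout, the compact support of \(u_0\) is the decisive structural hypothesis: since \(u_0 \in \mathcal{M}(\Omega)\) with \(\Omega\) compact, the representation \(u(x,t) = \int_\Omega G(x-y,t)\,du_0(y)\) from \eqref{eq:heat_u_0} shows that for each fixed \(t>0\) the function \(x \mapsto u(x,t)\) is smooth and decays, together with all its spatial derivatives, faster than any polynomial as \(\|x\|\to\infty\) (Gaussian tails). This guarantees that every moment \(M_\alpha(t)\) is finite and that the boundary terms arising in the integrations by parts vanish.

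First I would establish the ODE for \(t>0\). Fixing a compact subinterval \([t_0,t_1]\subset(0,\infty)\), the integrand \(x^\alpha\,\partial_t u(x,t)\) is dominated on it by an integrable, \(t\)-independent Gaussian envelope, which legitimizes differentiation under the integral sign and gives
\[
\frac{d}{dt} M_\alpha(t) = \int_{\R^d} x^\alpha\, \partial_t u(x,t)\,dx = \int_{\R^d} x^\alpha\, \Delta u(x,t)\,dx,
\]
where the last equality uses that \(u\) solves \eqref{eq:heat}. Integrating by parts twice and discarding the (vanishing) boundary terms transfers the Laplacian onto \(x^\alpha\), namely \(\int_{\R^d} x^\alpha\, \Delta u\,dx = \int_{\R^d} \Delta(x^\alpha)\, u\,dx\). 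A direct computation gives \(\Delta(x^\alpha) = \sum_{i=1}^d \alpha_i(\alpha_i-1)\, x^{\alpha - 2e_i}\), whence
\[
\frac{d}{dt} M_\alpha(t) = \sum_{i=1}^d \alpha_i(\alpha_i-1)\, M_{\alpha-2e_i}(t).
\]
This is precisely the \(\alpha\)-th row of \(\frac{dM}{dt} = A\,M\) with \(A\) as in \eqref{eq:ODE_moments}; note that \(\|\alpha - 2e_i\|_1 = \|\alpha\|_1 - 2\), so the right-hand side only involves moments of order \(\le k\) and the system is closed (this also foreshadows the nilpotency of \(A\)).

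Next I would treat the initial condition \(M(0) = \lim_{t\to 0^+} M(t)\). By Fubini's theorem, justified again by the compact support of \(u_0\) and the Gaussian decay of \(G\),
\[
M_\alpha(t) = \int_\Omega \left( \int_{\R^d} x^\alpha\, G(x-y,t)\,dx \right) du_0(y).
\]
The inner integral is the \(\alpha\)-th moment of a Gaussian centered at \(y\) with variance proportional to \(t\); it is a polynomial in \((y,t)\) that converges, uniformly for \(y\) in the compact set \(\Omega\) and \(t\in(0,1]\), to \(y^\alpha\) as \(t\to 0^+\), and is bounded there. Since \(|u_0|\) is a finite measure on \(\Omega\), dominated convergence yields \(\lim_{t\to 0^+} M_\alpha(t) = \int_\Omega y^\alpha\,du_0(y) = M_\alpha(0)\), which matches the stated definition of \(M(0)\).

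The main obstacle is technical rather than conceptual: because the test functions \(x^\alpha\) are unbounded, one must everywhere exploit the compact support of \(u_0\) and the super-polynomial decay of \(G(\cdot,t)\) and its derivatives in order to (i) justify differentiation under the integral sign on compact time intervals, (ii) ensure the boundary contributions in the two integrations by parts genuinely vanish, and (iii) pass to the limit \(t\to 0^+\). None of these steps requires more than standard dominated-convergence and Gaussian-tail estimates, but they are the points where the hypotheses must be invoked with care.
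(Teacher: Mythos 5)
Your proposal is correct and follows essentially the same route as the paper's own proof: differentiation under the integral sign plus two integrations by parts (using the Gaussian decay of $u(\cdot,t)$, i.e.\ that it lies in $\mathcal{S}(\mathbb{R}^d)$) for the ODE, and Fubini together with the explicit Gaussian moment formula and dominated convergence for the limit $t\to 0^+$. The only cosmetic difference is that you spell out the domination argument for differentiating under the integral on compact time intervals, which the paper leaves implicit.
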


\begin{proof} We proceed in several steps.

\medskip
\noindent\textbf{Step 1} (Case $t>0$).   For any \( t > 0 \), by the heat equation, we have
\[
\frac{d M_{\alpha}(t)}{dt} = \int_{\mathbb{R}^d} x^{\alpha} \partial_t u (x,t) \,dx = \int_{\mathbb{R}^d} x^{\alpha} \Delta u (x,t) \,dx.
\]
Since \( u(\cdot, t) \in \mathcal{S}(\mathbb{R}^d) \), applying integration by parts twice, we obtain
\[
\int_{\mathbb{R}^d} x^{\alpha} \Delta u (x,t) \,dx = \sum_{i=1}^d \alpha_i (\alpha_i-1) \int_{\mathbb{R}^d} x^{\alpha - 2 e_i} u(x,t) \,dx,
\]
where \( x^{\alpha - 2e_i} \) is set to be zero if \( \alpha_i \leq 1 \) for any \( i \). Therefore, equation \eqref{eq:ODE} holds for \( t > 0 \).

\medskip
\noindent\textbf{Step 2} (The limit $t\to 0^+$). 
 For any $t>0$, we have
\begin{equation*}
\begin{array}{rl}
  M_{\alpha}(t) 
  = \displaystyle \int_{\mathbb{R}^d} z^{\alpha}\, u(z,t)\, \mathrm{d}z & = \displaystyle \int_{\mathbb{R}^d} z^{\alpha}\, (G(\cdot,t) * u_0)(z)\, \mathrm{d}z \\[0.6em]
  &= \displaystyle \int_{\mathbb{R}^d} \int_{\Omega} z^{\alpha}\, G(x - z,t)\, \mathrm{d}u_0(x)\, \mathrm{d}z.
\end{array}
\end{equation*}
Note that there exists a constant $C_{\alpha}>0$ such that
    \begin{equation}\label{eq:claim}
         \int_{\R^d} |z|^{\alpha} G(x-z, t) \, d z  \leq  C_{\alpha} (|x|^{\alpha} +t^{\|\alpha\|_1/2}), \; \forall\,(x,t)\in \R^d\times (0, +\infty).
    \end{equation}
Indeed, using \eqref{eq:claim} and Fubini's theorem, we can interchange the order of integration in \( M_{\alpha}(t) \), yielding
\[
M_{\alpha}(t) = \int_{\Omega} \int_{\mathbb{R}^d} z^{\alpha} G(x-z, t) \,dz\,du_0(x).
\]
The inner integral can be computed explicitly:
\[
\int_{\mathbb{R}^d} z^\alpha\,G(x-z,t)\,\mathrm{d}z =
\sum_{\begin{array}{c} \beta \le \alpha, \\ \beta_i\ \mathrm{even\ for\ all}\ i \end{array}} 
\binom{\alpha}{\beta} \, x^{\alpha - \beta} \, (-1)^{\|\beta\|_1} 
(2t)^{\|\beta\|_1/2} \prod_{i=1}^d (\beta_i - 1)!!.
\]
Taking the limit as \( t \to 0^+ \), we obtain
\[
\lim_{t \to 0^+} \int_{\mathbb{R}^d} z^{\alpha} G(x - z, t) \,\mathrm{d}z = x^{\alpha}, 
\quad \mathrm{for\ all}\ x \in \mathbb{R}^d.
\]
By inequality \eqref{eq:claim}, the absolute value of the integrand is uniformly 
integrable with respect to \( u_0 \) for bounded \( t \ge 0 \), since \( u_0 \) has 
compact support. Therefore, by Lebesgue's Dominated Convergence Theorem, we conclude that
\[
\lim_{t \to 0^+} M_{\alpha}(t) = M_{\alpha}(0).
\]

\medskip
\noindent\textbf{Step 3} (Proof of \eqref{eq:claim}). Let us rewrite the convolution in \eqref{eq:claim} as
\[
\int_{\mathbb{R}^d} |z|^{\alpha} G(x - z, t) \, dz = \int_{\mathbb{R}^d} |x - z|^{\alpha} G(z, t) \, dz.
\]
   Decompose the right-hand-side by the following inequality:
    \begin{equation*}
        |x-z|^{\alpha} \leq 2^{\|\alpha\|_1} (|x|^{\alpha} + |z|^{\alpha}).
    \end{equation*}
   Since the total mass of \( G(\cdot, t) \) is 1, it remains to estimate the second integral involving \( |z|^{\alpha} \). By a direct computation, splitting the integral along each coordinate direction, we obtain
    \begin{equation*}
      \int_{\R^d} |z|^{\alpha} G(z,t)\, dz = \prod_{i=1}^d \frac{1}{\sqrt{4\pi t}} \int_{\R} |z_i|^{\alpha_i} \, \exp \left(-\frac{z_i^2}{4t}\right)\, dz_i  = \prod_{i=1}^d \frac{(4t)^{\frac{\alpha_i}{2}}}{\sqrt{\pi}} \Gamma \left(\frac{\alpha_i+1}{2}\right),
    \end{equation*}
    where $\Gamma$ is the Gamma function. This yields the term involving $t^{\|\alpha\|_1/2}$ in \eqref{eq:claim}. 
\end{proof}

\begin{lem}[Growth rate]\label{lem:growth}
    The matrix \(A\) defined by \eqref{eq:ODE_moments} is \( (\lfloor k/2 \rfloor +1) \)-nilpotent, i.e.,
    \begin{equation}\label{eq:A0}
        A^{\lfloor k/2 \rfloor +1} = 0.
    \end{equation}
    As a consequence, the solution $M(\cdot)$ of \eqref{eq:ODE} satisfies the following:
\begin{equation}\label{eq:grownwall}
    \| M(0) \|_{\infty} \leq  
       \| M(t) \|_{\infty} \, \sum_{j=0}^{\lfloor k/2 \rfloor} \frac{k^j(k-1)^j}{j\, ! } t^j, \quad\forall\, t\geq 0.
\end{equation}
\end{lem}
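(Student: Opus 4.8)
The plan is to prove the two claims in sequence: first the nilpotency \eqref{eq:A0}, and then derive the polynomial growth bound \eqref{eq:grownwall} by exponentiating and estimating the norm of $A$.

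For the nilpotency, the key structural observation is that $A$ is \emph{degree-lowering by two}. Indeed, by \eqref{eq:ODE_moments} the only nonzero entries $A_{\alpha,\alpha'}$ occur when $\alpha' = \alpha - 2e_i$, so that $\|\alpha'\|_1 = \|\alpha\|_1 - 2$. Consequently a product entry $(A^p)_{\alpha,\alpha''}$ is a sum over chains $\alpha = \alpha^{(0)} \to \alpha^{(1)} \to \cdots \to \alpha^{(p)} = \alpha''$ in which each step decreases the $\ell^1$-degree by exactly $2$; any nonzero contribution therefore forces $\|\alpha''\|_1 = \|\alpha\|_1 - 2p \ge 0$. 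Since every index under consideration satisfies $\|\alpha\|_1 \le k$, a nonzero entry of $A^p$ requires $2p \le k$. Taking $p = \lfloor k/2\rfloor + 1$ gives $2p > k$, forcing $A^{\lfloor k/2\rfloor+1} = 0$. This step is essentially bookkeeping, and I expect no obstacle beyond matching the nilpotency index precisely to $\lfloor k/2 \rfloor + 1$.

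For the growth bound, I would first record that, by Lemma~\ref{lem:ODE}, the solution is $M(t) = e^{tA}M(0)$ for $t>0$ (extended to $t=0$ by continuity), whence $M(0) = e^{-tA}M(t)$ and therefore
\[
\|M(0)\|_{\infty} \le \|e^{-tA}\|_{\infty}\,\|M(t)\|_{\infty}.
\]
Because $A$ is $(\lfloor k/2\rfloor+1)$-nilpotent, the exponential series truncates, giving $e^{-tA} = \sum_{j=0}^{\lfloor k/2\rfloor} \frac{(-t)^j}{j!}\,A^j$. Applying the triangle inequality and submultiplicativity of the induced norm, and using $|(-t)^j| = t^j$ for $t \ge 0$, yields
\[
\|e^{-tA}\|_{\infty} \le \sum_{j=0}^{\lfloor k/2\rfloor} \frac{t^j}{j!}\,\|A\|_{\infty}^{\,j}.
\]
It then remains only to prove the single scalar estimate $\|A\|_{\infty} \le k(k-1)$.

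To establish this last bound I would use that the induced $\ell^\infty$-norm equals the largest absolute row sum, which for the row indexed by $\alpha$ is $\sum_{i=1}^d \alpha_i(\alpha_i-1)$. Since each coordinate satisfies $\alpha_i \le \|\alpha\|_1 \le k$, I bound $\alpha_i - 1 \le k-1$ and obtain $\sum_{i} \alpha_i(\alpha_i-1) \le (k-1)\sum_i \alpha_i \le k(k-1)$. Substituting $\|A\|_{\infty}^{\,j} \le k^j(k-1)^j$ into the truncated series reproduces exactly the right-hand side of \eqref{eq:grownwall}. I do not anticipate any genuine difficulty: the only points requiring care are the precise nilpotency index and the elementary row-sum estimate just described.
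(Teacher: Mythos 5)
Your proof is correct and follows essentially the same route as the paper: nilpotency from the fact that each application of $A$ lowers the $\ell^1$-degree by two, then $M(0)=e^{-tA}M(t)$ with the truncated exponential series and the row-sum bound $\|A\|_\infty\le k(k-1)$. The only cosmetic difference is that you verify the nilpotency by direct bookkeeping of matrix-product chains, whereas the paper packages the same degree-lowering observation by identifying $A$ with the Laplacian acting on $\mathcal{P}_k(\mathbb{R}^d)$ in the monomial basis.
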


\begin{proof}
Let $\mathcal{P}_k(\mathbb{R}^d)$ be
the space of polynomials in \(d\) variables of total degree at most \(k\). For any monomial \(x^\alpha\) with \(\alpha=(\alpha_1,\dots,\alpha_d)\), we have 
\[
\Delta x^\alpha = \sum_{i\in I_{\alpha}} \alpha_i(\alpha_i-1)x^{\alpha-2e_i}, \quad \mathrm{where }\,I_{\alpha} = \left\{i\in\{1,\ldots, d\} \,\mid\, \alpha_i\geq 2 \right\}.
\]
Thus, if we define a linear operator
\[
L \colon  \mathcal{P}_k(\mathbb{R}^d) \to \mathcal{P}_k(\mathbb{R}^d), \quad L\, p = \Delta\, p,
\]
its matrix representation in the basis \(\{x^\alpha\}_{\|\alpha\|_1\le k}\) is exactly
\[
A_{\alpha,\alpha'} =
\left\{
  \begin{array}{ll}
    \alpha_i(\alpha_i - 1), & \quad \mathrm{if}\ \alpha' = \alpha - 2e_i, \\[0.6em]
    0, & \quad \mathrm{otherwise}.
  \end{array}
\right.
\]

Note that applying \(\Delta\) reduces the total degree of a polynomial by 2. 
Thus, after applying the Laplacian \(\lfloor k/2 \rfloor +1 \) times, we obtain
\[
\Delta^{\lfloor k/2 \rfloor +1} \, p  = 0, \quad \forall\, p\in \mathcal{P}_k(\mathbb{R}^d).
\]
Since \(A\) is the matrix representation of \(\Delta\), equation \eqref{eq:A0} follows.

Next, from \eqref{eq:ODE} we deduce that
\[
M(0) = e^{-tA}\, M(t),\quad \forall\, t\ge 0.
\]
Hence,
\[
\|M(0)\|_{\infty} \le \|M(t)\|_{\infty}\, \|e^{-tA}\|_{\infty}.
\]
Since \(A\) is \(\bigl(\lfloor k/2 \rfloor + 1\bigr)\)-nilpotent, we have
\[
e^{-tA} = \sum_{j=0}^{\lfloor k/2 \rfloor} \frac{(-A)^j}{j!}\, t^j,\quad \forall\, t\ge 0.
\]
Moreover, by definition,
\[
\|-A\|_{\infty} = \max_{\|\alpha\|_1\le k} \sum_{\alpha'} |A_{\alpha,\alpha'}| = \max_{\|\alpha\|_1\le k} \sum_{i=1}^d \bigl(\alpha_i(\alpha_i-1)\bigr)_+ = k(k-1).
\]
Combining these estimates, we obtain the desired inequality \eqref{eq:grownwall}.
\end{proof}

\begin{lem}[A priori bound]\label{prop:a priori bound}
  Let $ \Omega = [-R,R]^d$ for some $R>0$.
   For any \( k \in \mathbb{Z}_+ \) and any vector \( \mathbf{y}= (y_{\alpha})_{\|\alpha\|_1 \leq k} \), let $u_0^k$ be a solution of \eqref{pb:OC}. Then, we have
    \begin{equation*}
    \|u_0^k\|_{\mathrm{TV}} \leq \| \mathbf{y} \|_{\infty}  \,    \sqrt{\frac{k}{\pi}} \, \exp\Bigg( k\left(1+ 2d/R +  \ln \sqrt{k} \right)\Bigg) 
\max \left\{ T^{\lfloor k/2 \rfloor}, \, 1 \right\}.
\end{equation*}
\end{lem}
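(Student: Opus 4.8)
The plan is to bound the optimal value of \eqref{pb:OC} -- which is exactly $\|u_0^k\|_{\textnormal{TV}}$, since $u_0^k$ is a minimizer -- by passing to the Lagrangian dual. Problem \eqref{pb:OC} is a linear program over measures with finitely many moment constraints; its feasible set is non-empty and weak-$*$ compact (Theorem~\ref{thm:existence}), so strong duality holds and
\[
\|u_0^k\|_{\textnormal{TV}} \;=\; \max_{\lambda\in\R^{\binom{k+d}{d}}}\Bigl\{\,\langle\lambda,\,e^{-TA}\mathbf{y}\rangle \ :\ \Bigl\|\,\sum_{\|\alpha\|_1\le k}\lambda_\alpha\,x^\alpha\,\Bigr\|_{\mathcal{C}(\Omega)}\le 1\,\Bigr\}.
\]
This reduction is the crux: the total variation of the reconstruction equals the largest value the linear functional $\lambda\mapsto\langle\lambda,e^{-TA}\mathbf{y}\rangle$ attains over coefficient vectors $\lambda$ of polynomials bounded by $1$ on $\Omega=[-R,R]^d$.

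To bound this dual value I would estimate, uniformly over all feasible $\lambda$ (those whose polynomial $p_\lambda=\sum_\alpha\lambda_\alpha x^\alpha$ satisfies $\|p_\lambda\|_{\mathcal{C}(\Omega)}\le1$),
\[
\langle\lambda,\,e^{-TA}\mathbf{y}\rangle \;=\; \langle e^{-TA^\top}\lambda,\,\mathbf{y}\rangle \;\le\; \|\mathbf{y}\|_\infty\,\bigl\|e^{-TA^\top}\lambda\bigr\|_1 ,
\]
and then exploit that $A$ is the matrix of the Laplacian in the monomial basis (as established in Lemma~\ref{lem:growth}): consequently $e^{-TA^\top}\lambda$ is the coefficient vector of $e^{-T\Delta}p_\lambda=\sum_{j=0}^{\lfloor k/2\rfloor}\frac{(-T)^j}{j!}\Delta^j p_\lambda$, the sum being finite by nilpotency. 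Hence $\bigl\|e^{-TA^\top}\lambda\bigr\|_1\le\sum_{j=0}^{\lfloor k/2\rfloor}\frac{T^j}{j!}\bigl\|\operatorname{coeff}(\Delta^j p_\lambda)\bigr\|_1$. Each $\Delta^j p_\lambda$ has degree at most $k-2j$, and I would bound its coefficient $\ell^1$-norm by a Bernstein/Markov-type estimate (the role of Lemma~\ref{lem:poly_coef}): first $\|\Delta^j p_\lambda\|_{\mathcal{C}(\Omega)}$ is controlled by $\|p_\lambda\|_{\mathcal{C}(\Omega)}\le1$ via Markov's inequality on $[-R,R]^d$, and then the coefficients of the degree-$(k-2j)$ polynomial $\Delta^j p_\lambda$ are dominated by its sup-norm. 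The geometric factor $e^{2dk/R}$ in the claim arises precisely from the $R$-rescaling in these coefficient/Markov bounds on the hypercube $[-R,R]^d$.

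Finally I would factor out $\max\{T^{\lfloor k/2\rfloor},1\}$ (legitimate since $T^j\le\max\{T^{\lfloor k/2\rfloor},1\}$ for $0\le j\le\lfloor k/2\rfloor$), note that the resulting sum is dominated, up to the factor $\lfloor k/2\rfloor+1$, by its top ($j=\lfloor k/2\rfloor$) term, and apply a Stirling estimate to the dominant factorial ratio $\frac{(\,\cdot\,)^{\lfloor k/2\rfloor}}{\lfloor k/2\rfloor!}$ to extract the algebraic prefactor $\sqrt{k/\pi}$ together with the super-exponential factor $k^{k/2}$; collecting the remaining single-exponential contributions into $e^{k}$ then yields the stated bound. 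I expect the main obstacle to be exactly this tight bookkeeping: a naive product of $\|e^{-TA}\|_\infty$ with the worst-case coefficient bound over $\{\|p_\lambda\|_{\mathcal{C}(\Omega)}\le1\}$ would lose an extra exponential factor in $k$, so one must couple the two estimates term-by-term and crucially use that the high powers of $T$ multiply the much lower-degree -- hence much better-conditioned -- polynomials $\Delta^j p_\lambda$. A secondary technical point is the rigorous justification of strong duality (absence of gap and dual attainment) for this measure-valued linear program.
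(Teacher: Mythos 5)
Your overall architecture is the same as the paper's: the paper proves this lemma by (i) bounding the recovered initial moments, $\|e^{-TA}\mathbf y\|_\infty\le\|\mathbf y\|_\infty\sum_{j=0}^{\lfloor k/2\rfloor}\frac{(k(k-1))^j}{j!}T^j$, using the nilpotency of $A$ and $\|A\|_{\infty}=k(k-1)$ together with the Stirling estimate of Lemma~\ref{lem:Stirling}, and then (ii) invoking the static a priori bound of Lemma~\ref{lem:moment_exist}, itself obtained exactly as in your first display (Fenchel--Rockafellar duality, H\"older, and the coefficient bound $\|\lambda\|_1\le e^{2dk/R}$ of Lemma~\ref{lem:poly_coef}); so the duality and attainment issues you flag at the end are already discharged by the references in that lemma. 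Where you deviate is in the final assembly, and there your reasoning inverts the actual situation. The ``naive'' decoupled product $\|e^{-TA}\|_{\infty}\cdot\sup\{\|\lambda\|_1:\|p_\lambda\|_{\mathcal C(\Omega)}\le1\}$ that you dismiss does \emph{not} lose an exponential factor --- it is precisely the paper's proof and yields the stated constant on the nose. By contrast, your term-by-term coupling is the step that risks a genuine loss: controlling $\|\Delta^jp_\lambda\|_{\mathcal C(\Omega)}$ by iterating the second-derivative Markov inequality $j$ times costs roughly $(Cdk^4/R^2)^j$, whereas the direct coefficient recursion costs only $(k(k-1))^j$; at $j=\lfloor k/2\rfloor$ this is a superfluous factor of order $k^{k}$, which would overshoot the claimed $k^{k/2}$ prefactor (one could repair this with the sharp V.~A.~Markov inequality for $2j$-th derivatives, but only with delicate bookkeeping of the $d^j$ mixed terms in $\Delta^j$).

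The clean fix within your own framework is to never return to sup-norms after the first application of Lemma~\ref{lem:poly_coef}: since $\operatorname{coeff}(\Delta^jp_\lambda)=(A^\top)^j\lambda$ and $\|(A^\top)^j\|_{1\to1}=\|A^j\|_{\infty\to\infty}\le(k(k-1))^j$, you get
\begin{equation*}
\bigl\|e^{-TA^\top}\lambda\bigr\|_1\;\le\;\sum_{j=0}^{\lfloor k/2\rfloor}\frac{T^j}{j!}\,(k(k-1))^j\,\|\lambda\|_1\;\le\;e^{2dk/R}\sum_{j=0}^{\lfloor k/2\rfloor}\frac{(k(k-1))^j}{j!}\,T^j,
\end{equation*}
which, after Lemma~\ref{lem:Stirling}, is exactly the stated bound. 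With that replacement your proof coincides with the paper's up to transposition.
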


\begin{proof}
   Let \(u^k_0\) be a solution of \eqref{pb:OC}, and let \(u\) be its corresponding heat flow. By Lemma \ref{lem:ODE}, the moments of \(u\) satisfy the ODE system \eqref{eq:ODE}, and their growth rates are controlled by \eqref{eq:grownwall}. Since the moments at time \(T\) are given by \(\{y_\alpha\}_{\|\alpha\|_1\leq k}\), it follows from \eqref{eq:grownwall} that
\[
\left|\int_{\mathbb{R}^d} x^\alpha \, d u^k_0(x)\right|
\;\le\;
\|\mathbf{y}\|_{\infty} \sum_{j=0}^{\lfloor k/2 \rfloor} \frac{k^j(k-1)^j}{j\, ! } T^j, \quad \forall\,  \|\alpha\|_1 \le k.
\]
By Lemma \ref{lem:Stirling}, we have
\begin{equation*}
   \sum_{j=0}^{\lfloor k/2 \rfloor} \frac{k^j(k-1)^j}{j\, ! } T^j \leq   \sqrt{\frac{k}{\pi}} \, \exp\left( k + \frac{k}{2} \ln k \right)
\max \left\{ T^{\lfloor k/2 \rfloor}, \, 1 \right\}.
\end{equation*}
Finally, the conclusion follows from Lemma \ref{lem:moment_exist} and \eqref{eq:total_variation}, given that \(u^k_0\) solves \eqref{pb:OC}.
\end{proof}
    
    \begin{rem}
    The argument combines two key ingredients:
\begin{enumerate}
    \item  \emph{Backward moment propagation}: We first employ the quantitative estimates for the evolution of moments backward in time.
    \item \emph{Optimality condition exploitation}: Crucially, we use that $u_0^k$ satisfies the variational condition \eqref{pb:OC}.
\end{enumerate}
The interplay between (i) the analytic moment estimates and (ii) the variational structure of the optimization problem yields the desired error bounds.     \end{rem}

\subsection{Proof of Theorem \ref{thm:rate}}\label{sec_proof_main}
By the definition of the Kantorovich norm, there exists $v^*$, with $\|v^*\|_{\mathcal{C}(\Omega)}\leq 1$ and Lip$(v^*)\leq 1$, such that 
    \begin{equation}\label{eq:diff}
         \| u_0^{k}- u_0^*\|_{\mathrm{Kant}} = \int_{\Omega} v^*\, d (u_0^{k}- u_0^*).  
    \end{equation}
We now estimate the right-hand side expression in several steps.

\medskip
\noindent\textbf{Step 1} (Decomposition of the r.h.s. of \eqref{eq:diff}). By Corollary \ref{cor:approx_poly}, there exists a polynomial $p_k$  of degree $k$ such that
    \begin{equation}\label{eq:p_k}
        \|v^* - p_k\|_{\mathcal{C}(\Omega)} \leq \frac{C_d \, R}{2 k},
 \end{equation}
where \( C_d \) is a constant depending only on the dimension \( d \). 

Let us write
\begin{equation}\label{eq:decomp}
    \int_{\Omega} v^*\, d (u_0^{k}- u_0^*) = \underbrace{\int_{\Omega} (v^* - p_k)\, d (u_0^{k}- u_0^*)}_{\gamma_1} + \underbrace{\int_{\Omega}  p_k\, d (u_0^{k}- u_0^*)}_{\gamma_2}.
\end{equation}
In the following two steps, we estimate upper bounds of $\gamma_1$ and $\gamma_2$, respectively.

\medskip
\noindent\textbf{Step 2} (Estimate on $\gamma_1$). Recalling the definition of the observation error $\epsilon$, by Lemma \ref{prop:a priori bound}, there exists $\tilde{u}_0 \in \mathcal{M}(\Omega)$, such that
\begin{equation}\label{eq:tu}
\displaystyle
\int_{\Omega} x^{\alpha} \, \mathrm{d}\tilde{u}_0(x) 
= \left(e^{-TA}\, \epsilon \right)_{\alpha}, 
\quad \mathrm{for}\ \|\alpha\|_1 \leq k,
\end{equation}
and 
\begin{equation}\label{eq:tu_C}
\displaystyle
\|\tilde{u}_0\|_{\mathrm{TV}} \leq  
\| \epsilon \|_{\infty} \, 
\underbrace{
\sqrt{\frac{k}{\pi}} \, \exp\left( k\left(1+ \frac{2d}{R} + \ln \sqrt{k} \right)\right)
}_{\triangleq \tilde{C}_{d,r}(k)} 
\max \left\{ T^{\lfloor k/2 \rfloor}, \, 1 \right\}.
\end{equation}
Since $u_0^*$ is the true initial distribution, we deduce from \eqref{eq:tu} that 
\begin{equation*}
    \int_{\Omega} x^{\alpha}\, d(\tilde{u}_0+u^*_0)(x) = \left(e^{-TA}\, \mathbf{y} \right)_{\alpha} \quad \mathrm{for }\, \|\alpha\|_1 \leq k.
\end{equation*}
Since $u_0^k$ is a solution of \eqref{pb:OC}, we have
\begin{equation*}
    \|u_0^k\|_{\mathrm{TV}} \leq \|\tilde{u}_0 + u_0^*\|_{\mathrm{TV}}.
\end{equation*}
By the triangle inequality, 
\begin{equation*}
\begin{array}{rl}
  \|u_0^k - u_0^*\|_{\mathrm{TV}} 
  &\leq \|u_0^*\|_{\mathrm{TV}} + \|u_0^k\|_{\mathrm{TV}} 
  \leq \|u_0^*\|_{\mathrm{TV}} + \|\tilde{u}_0 + u_0^*\|_{\mathrm{TV}} \\[1.2ex]
  &\leq 2\|u_0^*\|_{\mathrm{TV}} + \|\tilde{u}_0\|_{\mathrm{TV}}.
\end{array}
\end{equation*}
Combining with \eqref{eq:p_k} and the a priori bound of $\|\tilde{u}_0\|_{\mathrm{TV}}$ in \eqref{eq:tu_C}, it follows that
\begin{equation}\label{eq:gamma_1}
    \gamma_1\leq  \frac{ C_d \, R \, \|u_0^*\|_{\mathrm{TV}}}{k} + \frac{C_d\,R}{2k} \,  \tilde{C}_{d,R}(k)\, \max \left\{ T^{\lfloor k/2 \rfloor}, \, 1 \right\} \,\|\epsilon\|_{\infty}.
\end{equation}

\noindent\textbf{Step 3} (Estimate on $\gamma_2$). By \eqref{eq:p_k}, we have  
\begin{equation*}
    \|p_k\|_{\mathcal{C}(\Omega)} \leq 1 + \frac{C_d \, R}{2 k}.
\end{equation*}
Then, according to Lemma \ref{lem:poly_coef}, 
\begin{equation}\label{eq:c_alpha}
    \sum_{\|\alpha\|_1 \leq k} |c_\alpha| \leq e^{\frac{2dk}{R}} \left( 1 + \frac{C_d \, R}{2 k}\right),
\end{equation}
where \( c_\alpha \) are the coefficients of \( p_k \). Letting \(u_0^k - u_0^*\) be the initial distribution of the heat equation, then the solution at time $T$ have moments $\epsilon_{\alpha}$ by the linearity of the heat equation. Therefore, we deduce from Lemmas \ref{lem:growth} and \ref{lem:Stirling} that for all $\|\alpha\|_1\leq k$,
\begin{equation}\label{eq:x^alpha}
    \left| \int_{\R^d} x^{\alpha} \, d (u_0^k - u_0^* )  \right |\leq  \sqrt{\frac{k}{\pi}} \, \exp\left( k + \frac{k}{2} \ln k \right)
\max \left\{ T^{\lfloor k/2 \rfloor}, \, 1 \right\} \, \| \epsilon \|_{\infty}.
\end{equation}
Recalling the definition of $\gamma_2$ from \eqref{eq:decomp}, we have
\begin{equation}\label{eq:gamma_2_poly}
    \gamma_2 = \sum_{\|\alpha\|_1\leq k} c_{\alpha}  \int_{\R^d} x^{\alpha} \, d (u_0^k - u_0^* ) .
\end{equation}
Recalling the definition of $C_{d,r}(k)$ from \eqref{grosseconstante} and combining \eqref{eq:c_alpha}-\eqref{eq:gamma_2_poly}, we obtain
\begin{equation}
    \label{eq:gamma_2}
        \gamma_2  \leq   \left( 1 + \frac{C_d \, R}{2 k}\right)\, \tilde{C}_{d,R}(k)\, \max \left\{ T^{\lfloor k/2 \rfloor}, \, 1 \right\}\, \| \epsilon \|_{\infty}.
\end{equation}
The final estimate \eqref{eq:conv} follows from \eqref{eq:diff}, \eqref{eq:decomp}, \eqref{eq:gamma_1}, and \eqref{eq:gamma_2}.

\subsection{Error estimate in Wasserstein distance}\label{sec_wasserstein}

The Kantorovich norm (see Definition~\ref{def:kant}) is introduced to quantify the distance between two signed measures that may have different total masses. In the case where the total mass is the same, the Wasserstein distance is a well-known metric for measuring the discrepancy between distributions. Indeed, in this case, we can also estimate the Wasserstein distance between the recovered distribution $u_0^k$ and the true one $u^*_0$, see \eqref{eq:conv2}.

Let $\Omega$ be a compact set. Let \( u_1 \) and \( u_2 \) be two signed measures in \( \mathcal{M}(\Omega) \) with the same total mass, i.e., \( u_1(\Omega) = u_2(\Omega) \). The Wasserstein-1 distance (also referred to as the Kantorovich-Rubinstein distance or the Earth Mover's Distance in the literature) is defined as 
\begin{equation}\label{eq:W1}
    W_1(u_1,u_2) \triangleq  \max_{\begin{array}{c}
    v \in \mathcal{C}(\Omega),\\ 
    \mathrm{Lip}(v) \leq 1
  \end{array}} \int_{\Omega} v \,  d\,(u_1-u_2).
\end{equation}  
Compared to the Kantorovich distance defined in~\eqref{eq:kant_norm}, the \(W_1\) distance removes the upper bound of 1 on the maximum norm of the test function \(v\). As a result, the \(W_1\) distance is not applicable when \(u_1(\Omega) \neq u_2(\Omega)\). Indeed,  adding a constant \(c\) to \(v\) does not change its Lipschitz constant, but introduces an additional nonzero term \(\int_{\Omega} c \, d(u_1 - u_2)\) in the integral. By choosing \(c\) with arbitrarily large magnitude, one can make the right-hand side of~\eqref{eq:W1} diverge to infinity.
 
Restricting \( u_1 \) and \( u_2 \) to the space of probability measures recovers the classical definition of the Wasserstein-1 distance in \cite[Sec.~6]{villani2009}.

In the case of equal total mass, the Wasserstein-1 distance is equivalent to the Kantorovich norm, as shown in~\cite[Eq.\ (1.20)]{hanin1999kantorovich}: If \( u_1(\Omega) = u_2(\Omega) \), then  
\begin{equation}
    \| u_1 - u_2 \|_{\mathrm{Kant}} \leq W_1(u_1,u_2) \leq \max\left\{1\,,\, D_{\Omega}/2\right\} \, \| u_1 - u_2 \|_{\mathrm{Kant}},
\end{equation}  
where \( D_{\Omega} \) denotes the diameter of \( \Omega \).  

Therefore, in the context of Theorem \ref{thm:rate}, if we suppose that there is no error in the total mass observation, i.e., \( \epsilon_0 = 0 \), then,
\begin{equation}\label{eq:conv2}
    W_1(u_0^k, u_0^*) \leq \max\{1\,,\, \sqrt{d} R\} \|u_0^k-u_0^*\|_{\mathrm{Kant}},
\end{equation}
and $ \|u_0^k-u_0^*\|_{\mathrm{Kant}}$ is bounded by \eqref{eq:conv}.

We note that if the total mass of the initial distribution is known a priori (for example, if \( u_0^* \) is known to be a probability measure), then it can be preserved throughout the entire numerical resolution process. First, since the heat equation conserves total mass, we can directly set \( y_0 \) to the known value, without needing to estimate it via quadrature methods. Next, the semigroup \( e^{-tA} \), associated with the reverse moment dynamics, is a polynomial operator. As a result, computing \( e^{-tA} y \) introduces no numerical error, as it simply involves evaluating a polynomial in \( tA \) applied to \( y \); see Section~\ref{sec_discretzation} for the explicit formula. Finally, the moment constraints in the primal problem are given as equalities, ensuring that the total mass is exactly preserved in the final solution.

\section{Observation, discretization, and algorithms}\label{sec_dis}
This section is devoted to the numerical implementation of our moment-based method. Section~\ref{sec:quadrature} introduces two quadrature techniques for observing the moments of \( u(\cdot, T) \), while Section~\ref{sec_discretzation} describes the discretization procedure and the optimization algorithm used to solve \eqref{pb:OC}.

\subsection{Observation of moments}\label{sec:quadrature} 
The numerical calculation of the moments
\begin{equation*}
   M_{\alpha}(T) = \int_{\R^d} x^{\alpha} u(x,T) \,dx.
\end{equation*}
 is based on the observations of \( u(x, T) \) at specific sensor locations. Therefore, quadrature methods are used to approximate the integral. In the following paragraphs, two quadrature methods are presented.

\subsubsection{Uniform Quadrature}
The first and most direct way to compute \( M^{\alpha}(T) \) is via uniform quadrature, that is, by approximating the integral with a Riemann sum on a uniform grid. Specifically, we choose a large \( L>0 \) to bound the domain and discretize \([-L,L]^d\) uniformly using \( n^d \) points (with \( n \) points along each edge). The grid points are denoted by \( x_{\beta} \) for \(\beta\in \mathbb{Z}^{*\,d}_{+}\) with \( \|\beta\|_{\infty} \le n \). The resulting approximation is given by:
\begin{equation}\label{eq:uniform}
    M_{\alpha}(T) \approx \left(\frac{2L}{n}\right)^d \sum_{\|\beta\|_{\infty} \le n} x_{\beta}^{\alpha}\, u(x_{\beta}, T).
\end{equation}
In this method, the sensors are fixed at points \( x_{\beta} \) and hence their positions are independent of \( T \). However, when \( T \) is large, the solution \( u(\cdot, T) \) becomes widely spread, meaning that the concentration within the hypercube \([-L,L]^d\) decreases. 

For example, if the initial distribution is \( \delta_0 \), then by the Gaussian tail estimate,  \( (1 - \epsilon) \) of the mass of \( u(\cdot, T) \) is concentrated in the box \( [-L_{\epsilon, T}, L_{\epsilon, T}]^d \), where \( L_{\epsilon, T} \) is of order \( \sqrt{T \ln(d/\epsilon)} \). 
As a result, the discretization parameter \( n \) must be sufficiently large to accurately capture the solution. In conclusion, the uniform approach is straightforward to implement and works for small terminal times \( T \), but for larger \( T \), we instead propose the Gauss--Hermite method described below.

\subsubsection{Gauss--Hermite quadrature} 
To derive the quadrature formula of this method, we first apply a change of variable to the moment integral by scaling \( x \) by a factor \(\sigma\) (to be determined later):
\begin{equation}\label{eq:Gauss-1}
    \int_{\R^d} x^{\alpha} u(x,T) \,dx = \sigma^{\|\alpha\|_1+d} \int_{\R^d} z^{\alpha} u(\sigma z,T) \,dz.
\end{equation}
Next, we rewrite the integral on the right-hand side of \eqref{eq:Gauss-1} as
\begin{equation}\label{eq:Gauss-2}
    \int_{\R^d} z^{\alpha} u(\sigma z,T) \,dz = \int_{\R^d} \underbrace{\Bigl( z^{\alpha} u(\sigma z,T) e^{\|z\|^2} \Bigr)}_{\triangleq \,g_{\sigma}(z)}\, e^{-\|z\|^2} \,dz.
\end{equation}
By applying the Gauss--Hermite quadrature in dimension \(d\), we obtain that
\begin{equation}\label{eq:Gauss-3}
    \int_{\R^d} g_{\sigma}(z) \, e^{-\|z\|^2} \,dz \approx \sum_{i_1=1}^n\cdots\sum_{i_d=1}^n \Biggl(\prod_{j=1}^d \omega_{i_j}\Biggr) \, g_{\sigma}(z_{i_1}, \ldots, z_{i_d}),
\end{equation}
where $n\geq 1$ is the degree of quadrature, \((\omega_i)_{i=1}^n\) and \((z_i)_{i=1}^n\) denote the weights and nodes corresponding to the \(n\)-th Hermite polynomial (see \cite[Sec.~18.3]{olver2010nist}).

For the quadrature to succeed, it is essential that \(g_{\sigma}(z_{i_1}, \ldots, z_{i_d})\) remains within moderate bounds. Otherwise, numerical overflow may occur, especially since \(g_{\sigma}\) contains the exponential factor \(e^{\|z\|^2}\). However, the rapid spatial decay of the heat solution \(u(\cdot,T)\) compensates for this exponential growth if a reasonable scaling factor \(\sigma\) is chosen.
Indeed, note that \(u(\sigma z, T)\) behaves like \(e^{-\|\sigma z\|^2/(4T)}\) (according to the formula for the heat kernel \(G\)). Hence, we choose
\begin{equation}\label{eq:Gauss-4}
    \sigma = 2\sqrt{T}.
\end{equation}
Therefore, by \eqref{eq:Gauss-1}--\eqref{eq:Gauss-4} we obtain the following complete approximation for \(M_{\alpha}(T)\):
\begin{equation}
M_{\alpha}(T) 
\approx (2\sqrt{T})^{\|\alpha\|_1 + d} \sum_{i_1\ldots, i_d = 1}^n  
\left( \prod_{j = 1}^d \omega_{i_j} \, z_{i_j}^{\alpha_j} \right) \nonumber\exp\left( \sum_{j = 1}^d z_{i_j}^2 \right) 
\, u\left( 2\sqrt{T}(z_{i_1}, \ldots, z_{i_d}), T \right)\!.
\label{eq:Gauss}
\end{equation}
Compared to \eqref{eq:uniform}, the Gauss--Hermite quadrature features sensor positions that depend on \( T \). Indeed, the positions 
\[
2\sqrt{T}(z_{i_1},\ldots, z_{i_d})
\]
grow at a rate proportional to \(\sqrt{T}\). Despite the inconvenience associated with moving sensors, our numerical simulations indicate that the Gauss--Hermite quadrature \eqref{eq:Gauss} yields superior results to the uniform quadrature \eqref{eq:uniform} with the same number of sensors, see Figure \ref{fig:quadrature} for a comparison.

In the presence of pointwise observational noise, 
the following result provides an upper bound for the moment error produced by 
the Gauss--Hermite quadrature \eqref{eq:Gauss}, expressed in terms of the number 
of basis functions $n$ (per coordinate direction) and the noise level $\eta$.

\begin{lem}\label{lm:quadrature}
Let $T>0$ and let $\alpha\in\mathbb{N}^d$ be a multi–index.  
Assume that in the Gauss--Hermite quadrature formula \eqref{eq:Gauss}, each value
\(
u\!\left( 2\sqrt{T}\,z_{i},\,T\right)
\)
is observed with an additive perturbation of magnitude at most $\eta>0$.  
Let $M_\alpha^n(T)$ denote the quadrature output computed from these noisy evaluations, using $n$ Hermite nodes in each coordinate direction.
Then, there exists constants
\(\rho\in (0,1),
C_{\alpha,d,T}>0
\) and $C_{\alpha,d}$
such that the quadrature error satisfies
\[
\bigl| M_\alpha(T) - M_\alpha^n(T) \bigr|
\;\le\;
C_{\alpha,d,T}\,
\rho^{n}
\;+\;C_{\alpha,d}\,
\eta,
\qquad 
\forall\, n\ge (\|\alpha\|_1+1)/2.
\]
\end{lem}

\begin{proof}
The proof is presented in Section~\ref{sec_proof}.
\end{proof}

By combining the quadrature error bound derived above with the error estimate established in Theorem~\ref{thm:rate}, we obtain the following result.

\begin{rem}[Total error induced by pointwise observational noise]\label{rem:total-error}
In the setting of Theorem~\ref{thm:rate}, suppose that the moments are computed
using the Gauss--Hermite quadrature formula \eqref{eq:Gauss} with 
$n$ nodes in each coordinate direction, and assume that each quadrature
evaluation is perturbed by at most $\eta>0$.  
By Lemma~\ref{lm:quadrature}, for every $n \ge (k+1)/2$, there exist constants $\rho\in (0,1)$ $C_1,C_2,C_3>0$ (independent of $n$ and $\eta$) such that
\begin{equation}\label{eq:conv_3}
\|u_0^k - u_0^*\|_{\mathrm{Kant}}
\;\le\;
\frac{C_1}{k}
\;+\;
C_2\, \rho^n
\;+\;
C_3\, \eta .
\end{equation}
Here \(C_1\) depends only on \(d\), \(R\), and \(\|u_0^*\|_{\mathrm{TV}}\), 
while \(\rho\), \(C_2\) and \(C_3\) may additionally depend on  
\(T\) and \(k\).  
Explicit formulas can be extracted from \eqref{grosseconstante} and the proof of
Lemma~\ref{lm:quadrature}. The total error \eqref{eq:conv_3} decomposes into three contributions: (1) the intrinsic approximation error due to truncating the moment system,
        which decays like $1/k$; (2) the Gauss--Hermite quadrature error, which decays exponentially in $n$; and (3) the pointwise observational noise, contributing linearly in~$\eta$.
\end{rem}

\subsection{Discretize-then-optimize}\label{sec_discretzation}
Since \eqref{pb:OC} is an optimization problem defined on an infinite-dimensional space, we adopt a discretize-then-optimize approach to solve it.
\subsubsection{Discretization} We first discretize the domain $\Omega$ by selecting a set of $N$ points,
\[
\Omega \mathrel{\mathop{\hbox to 2cm{\rightarrowfill}}\limits^{\mathrm{discretize}}} \Omega_h = \{x_1, \ldots, x_N\}.
\]
In this discrete setting, any initial condition \( u_0 \in \mathcal{M}(\Omega_h) \) can be expressed as a finite sum of Dirac measures:
\[
u_0 = \sum_{i=1}^N m_i\, \delta_{x_i}.
\]
Thus, the discretized version of \eqref{pb:OC} reduces to a finite-dimensional optimization problem in \(\mathbb{R}^N\), with the optimization variables given by the vector of weights \(\mathbf{m} = (m_i)_{i=1}^N\):
\begin{equation}\label{pb:OC_dis}
    \inf_{\mathbf{m}\in \mathbb{R}^N} \|\mathbf{m}\|_{\ell^1} 
    \quad
    \mathrm{subject\, to}
    \quad
    B\, \mathbf{m} \;=\; e^{-T A}\, \mathbf{y},
\end{equation}
where $\mathbf{y} = (y_{\alpha})_{\|\alpha\|_1\leq k}$ denotes the observations of moments at time $T$, and
\begin{itemize}
    \item $A$ is defined in \eqref{eq:ODE_moments}. By Lemma~\ref{lem:growth}, $A$ is \( (\lfloor k/2 \rfloor +1) \)-nilpotent, so that
    \[
    e^{-T A} = \sum_{j=0}^{\lfloor k/2 \rfloor} \frac{(-A)^j}{j!}\, T^j ;
    \]
    \item $B$ is a $\binom{k + d}{d} \times N$ matrix with entries
    \[
    b_{\alpha, i} = x_i^{\alpha},\quad \mathrm{for } \, \|\alpha\|_1 \leq k,\quad i=1,\ldots, N.
    \]
\end{itemize}

The non-emptiness of the admissible set for the discretized problem \eqref{pb:OC_dis} is nontrivial. This is because the monomials $\{x^{\alpha}\}_{\|\alpha\|_1\leq k}$ are not, in general, linearly independent in $\mathcal{C}(\Omega_h)$. To guarantee the feasibility of \eqref{pb:OC_dis}, we introduce the notion of a \textit{unisolvent set}.
 A set of points \( \{z_{i} \in \mathbb{R}^d\}_{i=1,\ldots, \binom{k + d}{d} }\) is said to be a unisolvent set of degree \( k \) if and only if the only polynomial \( p \) of degree at most \( k \) that satisfies  
\[
p(z_{i}) = 0 \quad \forall\, i\leq \binom{k + d}{d} 
\]
is the zero polynomial, i.e., \( p \equiv 0 \). 
Here, we give two examples of unisolvent sets:
\begin{enumerate}
    \item If $d=1$, then \( \{z_{i} \in \mathbb{R}\}_{i =1,\ldots, k+1} \) is a unisolvent set of degree \( k \) if and only if $z_{i}\neq z_{j}$ for any $i\neq j$.
    \item If \(d = 2\), then the family of Padua points \cite{caliari2005bivariate,bos2007bivariate} up to degree \(k\) forms a unisolvent set of degree \(k\). Suppose \(k\) is even, then the family of Padua points up to degree \(k\) is the collection of $z_{i,j} = (z^1_i,z^2_j) \in\R^2$ for $0\leq i\leq k$ and $0\leq j\leq k/2$ with
\begin{equation*}
z^1_i = \cos\left(\frac{i\,\pi}{k}\right),
\quad
z^2_j =
\left\{
  \begin{array}{ll}
    \displaystyle \cos\left(\frac{2j\,\pi}{k+1}\right), 
    & \quad \mathrm{if}\ i\ \mathrm{is\ odd}, \\[1.2em]
    \displaystyle \cos\left(\frac{(2j+1)\,\pi}{k+1}\right), 
    & \quad \mathrm{if}\ i\ \mathrm{is\ even}.
  \end{array}
\right.
\end{equation*}

\end{enumerate}

\begin{lem}\label{lem:exist}
   Suppose that \(N \ge \binom{k + d}{d}\) and that \(\Omega_h\) contains \(\binom{k + d}{d}\) points forming a unisolvent set of degree \(k\). Then  problem \eqref{pb:OC_dis} admits solutions for any $\mathbf{y} \in \R^{\binom{k+d}{d}}$.
\end{lem}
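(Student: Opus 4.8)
The plan is to establish the two ingredients that guarantee a minimizer of a coercive convex objective over an affine constraint set exists: first, that the feasible set $\{\mathbf{m}\in\R^N : B\,\mathbf{m} = e^{-TA}\mathbf{y}\}$ is non-empty for \emph{every} right-hand side, and second, that the infimum of the $\ell^1$ objective over this set is attained.

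The substantive step is non-emptiness, where I would translate the unisolvent hypothesis into a rank statement about $B$. Relabel the $\binom{k+d}{d}$ unisolvent points as $z_1,\ldots,z_m$ with $m\coloneqq\binom{k+d}{d}$, and let $V$ be the square matrix with entries $V_{j,\alpha}=z_j^{\alpha}$ for $\|\alpha\|_1\le k$. A polynomial $p=\sum_{\|\alpha\|_1\le k}c_\alpha x^\alpha$ of degree at most $k$ vanishes at all the $z_j$ precisely when $V\mathbf{c}=0$; the unisolvent property forces $\mathbf{c}=0$, so $V$ is invertible. Since $V$ is, up to transposition, exactly the submatrix $B_{\mathrm{sub}}$ of $B$ obtained by keeping the columns indexed by $z_1,\ldots,z_m$ (indeed $(B_{\mathrm{sub}})_{\alpha,j}=z_j^{\alpha}=V_{j,\alpha}$), the matrix $B$ has full row rank $m$. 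Consequently $B\,\mathbf{m}=e^{-TA}\mathbf{y}$ is solvable for arbitrary $\mathbf{y}$: one may solve the invertible square subsystem on the unisolvent columns and set the remaining entries of $\mathbf{m}$ to zero.

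For attainment I would use a standard coercivity/compactness argument. The feasible set $K\coloneqq\{\mathbf{m}: B\,\mathbf{m}=e^{-TA}\mathbf{y}\}$ is a non-empty closed affine subspace of $\R^N$, and the objective $f(\mathbf{m})=\|\mathbf{m}\|_{\ell^1}$ is continuous and coercive. Fixing any $\mathbf{m}_0\in K$ and setting $c_0=f(\mathbf{m}_0)$, the sublevel set $\{f\le c_0\}$ is bounded because $f$ is a norm, so $K\cap\{f\le c_0\}$ is compact and non-empty. Since minimizing $f$ over $K$ coincides with minimizing it over this compact set, continuity of $f$ yields a minimizer, which is the desired solution of \eqref{pb:OC_dis}.

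The argument is elementary once the first step is in place; the only genuine point is the passage from the combinatorial unisolvent condition to the full-row-rank property of $B$, which rests on identifying the coefficient vector of a degree-$k$ polynomial with the kernel of the associated Vandermonde matrix $V$. Alternatively one could recast \eqref{pb:OC_dis} as a linear program and invoke the existence theory for feasible, bounded-below LPs, but the direct compactness route avoids that machinery.
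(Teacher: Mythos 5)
Your proof is correct and follows essentially the same route as the paper: unisolvence gives an invertible square Vandermonde submatrix, hence full row rank of $B$ and a non-empty feasible set, after which existence follows from convexity and coercivity of the $\ell^1$ objective. The only difference is that you prove the unisolvence--invertibility equivalence directly via the kernel of $V$, whereas the paper cites \cite{olver2006multivariate} for this step; your self-contained argument is a perfectly valid substitute.
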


\begin{proof}
    The matrix \( B \) is a high-dimensional (rectangular) Vandermonde matrix. It has full row rank if it contains a square, invertible Vandermonde submatrix. According to \cite[Thm.\@ 4.1]{olver2006multivariate}, this condition is equivalent to the existence of \(\binom{k + d}{d}\) points in \(\Omega_h\) that form a unisolvent set of degree \(k\). In this case, the admissible set is non-empty and convex. The existence of a solution then follows from standard arguments based on the convexity and coercivity of problem~\eqref{pb:OC_dis}.
\end{proof}

\subsubsection{Optimization}\label{sec_optimization}
Following the general idea in \cite{chen2001atomic}, we transform \eqref{pb:OC_dis} into an equivalent linear programming and employ the simplex method to solve it. 
The equivalent linear programming for \eqref{pb:OC_dis} is given by
\begin{equation}\label{pb:OC_dis_eq}
    \inf_{\mathbf{m_+}\in \mathbb{R}_+^N, \, \mathbf{m_-}\in \mathbb{R}_+^N} 
    \langle \mathbf{1}, \mathbf{m_+} + \mathbf{m_-} \rangle 
    \quad
    \mathrm{subject\, to}
    \quad
    B \bigl(\mathbf{m_+} - \mathbf{m_-}\bigr) = e^{-T A}\, \mathbf{y} ,
\end{equation}
where \(\mathbf{1}\) is the \(N\)-dimensional vector of all ones.

\begin{prop}\label{prop:simplex}
      Under the setting of Lemma \ref{lem:exist}, problem \eqref{pb:OC_dis_eq} has solutions. Let \((\mathbf{m_+} , \mathbf{m_-})\) be the solution of \eqref{pb:OC_dis_eq} obtained from the simplex method. Then \( \mathbf{m} = \mathbf{m_+} - \mathbf{m_-} \) is a solution of \eqref{pb:OC_dis} and $\|\mathbf{m}\|_{\ell^0} \leq \binom{k + d}{d}$.
\end{prop}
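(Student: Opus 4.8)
The plan is to prove the three assertions in sequence: existence of a minimizer for the linear program \eqref{pb:OC_dis_eq}, its equivalence with the original $\ell^1$ problem \eqref{pb:OC_dis}, and the sparsity bound, the last of which will follow essentially for free once the structure of the simplex output is invoked. For existence, I would first note that under the hypotheses of Lemma~\ref{lem:exist} the admissible set of \eqref{pb:OC_dis} is non-empty; splitting any feasible $\mathbf{m}$ into positive and negative parts produces a feasible pair for \eqref{pb:OC_dis_eq}, so the latter has a non-empty feasible region. Its objective $\langle \mathbf{1}, \mathbf{m_+}+\mathbf{m_-}\rangle$ is nonnegative there (both vectors lie in $\mathbb{R}_+^N$), hence the infimum is finite and bounded below by $0$. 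By the fundamental theorem of linear programming \cite[Thm.~3.3]{bertsimas1997introduction}, a standard-form LP with non-empty feasible region and objective bounded below attains its optimum at a basic feasible solution; this yields existence and, simultaneously, records that the simplex method returns a vertex of the feasible polytope.

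For the equivalence I would use the elementary relation between the $\ell^1$ norm and the positive/negative decomposition. In one direction, given feasible $\mathbf{m}$ for \eqref{pb:OC_dis}, the choice $m_{+,i}=\max(m_i,0)$, $m_{-,i}=\max(-m_i,0)$ gives a feasible pair for \eqref{pb:OC_dis_eq} with $\langle \mathbf{1}, \mathbf{m_+}+\mathbf{m_-}\rangle = \|\mathbf{m}\|_{\ell^1}$, so the optimal value of \eqref{pb:OC_dis_eq} is at most that of \eqref{pb:OC_dis}. Conversely, for any feasible pair, $\mathbf{m}=\mathbf{m_+}-\mathbf{m_-}$ is feasible for \eqref{pb:OC_dis} and the coordinatewise triangle inequality gives $\|\mathbf{m}\|_{\ell^1}\le \langle \mathbf{1}, \mathbf{m_+}+\mathbf{m_-}\rangle$. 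Combining the two bounds shows the optimal values coincide, and that if $(\mathbf{m_+},\mathbf{m_-})$ solves \eqref{pb:OC_dis_eq} then $\mathbf{m}=\mathbf{m_+}-\mathbf{m_-}$ attains the optimal value of \eqref{pb:OC_dis}, hence solves it.

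For the sparsity estimate I would observe that the constraint matrix of \eqref{pb:OC_dis_eq} is $[B,-B]$, whose row rank equals that of $B$, namely $\binom{k+d}{d}$ by the unisolvent hypothesis used in Lemma~\ref{lem:exist}. A basic feasible solution of a standard-form LP with $\binom{k+d}{d}$ equality constraints has at most $\binom{k+d}{d}$ nonzero coordinates among the $2N$ variables. Since the simplex method returns such a solution, $\|\mathbf{m_+}\|_{\ell^0}+\|\mathbf{m_-}\|_{\ell^0}\le \binom{k+d}{d}$; and because $m_i\neq 0$ forces at least one of $m_{+,i},m_{-,i}$ to be nonzero, I conclude $\|\mathbf{m}\|_{\ell^0}\le \|\mathbf{m_+}\|_{\ell^0}+\|\mathbf{m_-}\|_{\ell^0}\le \binom{k+d}{d}$.

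The step I would treat as the main obstacle is the clean correspondence between the LP optimizer and a genuine $\ell^1$-minimizer, since a priori an optimal pair need not satisfy the complementarity $m_{+,i}m_{-,i}=0$, in which case $\|\mathbf{m}\|_{\ell^1}$ could be strictly below $\langle\mathbf{1},\mathbf{m_+}+\mathbf{m_-}\rangle$. This is harmless for optimality, as the inequality already forces $\|\mathbf{m}\|_{\ell^1}$ to equal the common optimal value; but the decisive point is that a \emph{basic} feasible solution automatically enforces complementarity, because the columns associated to $m_{+,i}$ and $m_{-,i}$ are $B_{\cdot i}$ and $-B_{\cdot i}$, which are linearly dependent and therefore cannot both belong to a basis. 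This simultaneously reconciles the objective values exactly and confirms that the recovered measure has the extreme-point form \eqref{eq:extreme_point}.
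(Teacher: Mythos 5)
Your proof is correct and follows the standard linear-programming route (splitting into positive/negative parts, equality of optimal values, basic feasible solutions of $[B,-B]$, and the observation that the columns $B_{\cdot i}$ and $-B_{\cdot i}$ cannot both be basic), which is precisely the argument the paper delegates to Theorem~4.7 of \cite{liu2025representation} rather than writing out. Your treatment of the complementarity issue via linear dependence of paired columns is the right key point and is handled correctly.
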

\begin{proof}
    The proof is the same as the proof of Theorem 4.7 in \cite{liu2025representation}.
\end{proof}

\section{Numerical simulations}\label{sec_num}

\subsection{Settings}
In our numerical experiments, we consider the heat equation \eqref{eq:heat} in dimensions \(d=1\) and \(d=2\), with initial data given by six Dirac masses whose locations and amplitudes are chosen at random. The exact positions and amplitudes are listed in Table \ref{tab:dirac} and depicted in the first row of Figure \ref{fig:initial-final}. Here, the amplitudes are randomly generated and kept deliberately heterogeneous to provide a challenging test case and to assess the robustness of our recovery method.

\begin{table}[h]
\caption{\label{tab:dirac}Support positions and amplitudes of $u_0^*$ in 1D and 2D.}
\begin{center}
\begin{tabular}{@{}c@{\hspace{15mm}}rr@{\hspace{15mm}}lr}
\br
\multirow{2}{*}{Index} & \multicolumn{2}{c}{\hspace{-15mm}1D Case} & \multicolumn{2}{c}{\hspace{-5mm}2D Case} \\
 & Position & Amplitude & Position & Amplitude \\
\mr
1 & $-3.11$ &  $4.0071$ & $(-1.30,\ -2.27)$ &  $2.6832$ \\
2 &  $2.16$ & $-4.6658$ & $(-1.43,\ \ \ 0.08)$ &  $0.6610$ \\
3 & $-2.13$ &  $4.5695$ & $(\ \ 3.90,\ -3.69)$ & $-2.5463$ \\
4 &  $0.30$ & $-3.6279$ & $(\ \ 3.72,\ \ \ 2.57)$ &  $0.4501$ \\
5 & $-4.37$ & $-2.1617$ & $(\ \ 3.04,\ -0.91)$ & $-3.5543$ \\
6 &  $3.77$ &  $1.0608$ & $(-3.96,\ -0.52)$ & $-0.5107$ \\
\br
\end{tabular}
\end{center}
\end{table}

The moments are computed using quadrature methods based on the values of \( u(x_i, T) \), where \( x_i \) denotes the positions of the sensors used in the corresponding quadrature rule. The values \( u(x_i, T) \) are computed using the closed-form solution of the heat equation. Since the initial distribution is a sum of Dirac delta functions, the solution is expressed as a sum of Gaussian distributions. This approach avoids numerical errors associated with discretizing the heat equation.
Aside from this, the initial distribution is not incorporated into the optimization process.
The second row of Figure \ref{fig:initial-final} shows solutions at one terminal time \( T=10 \) over their proper domain, clearly demonstrating that recovering the initial distribution \( u_0 \) is highly non-trivial.

\begin{figure}[h]
  \centering
  \begin{subfigure}[b]{0.45\textwidth}
    \centering
    \includegraphics[width=\linewidth]{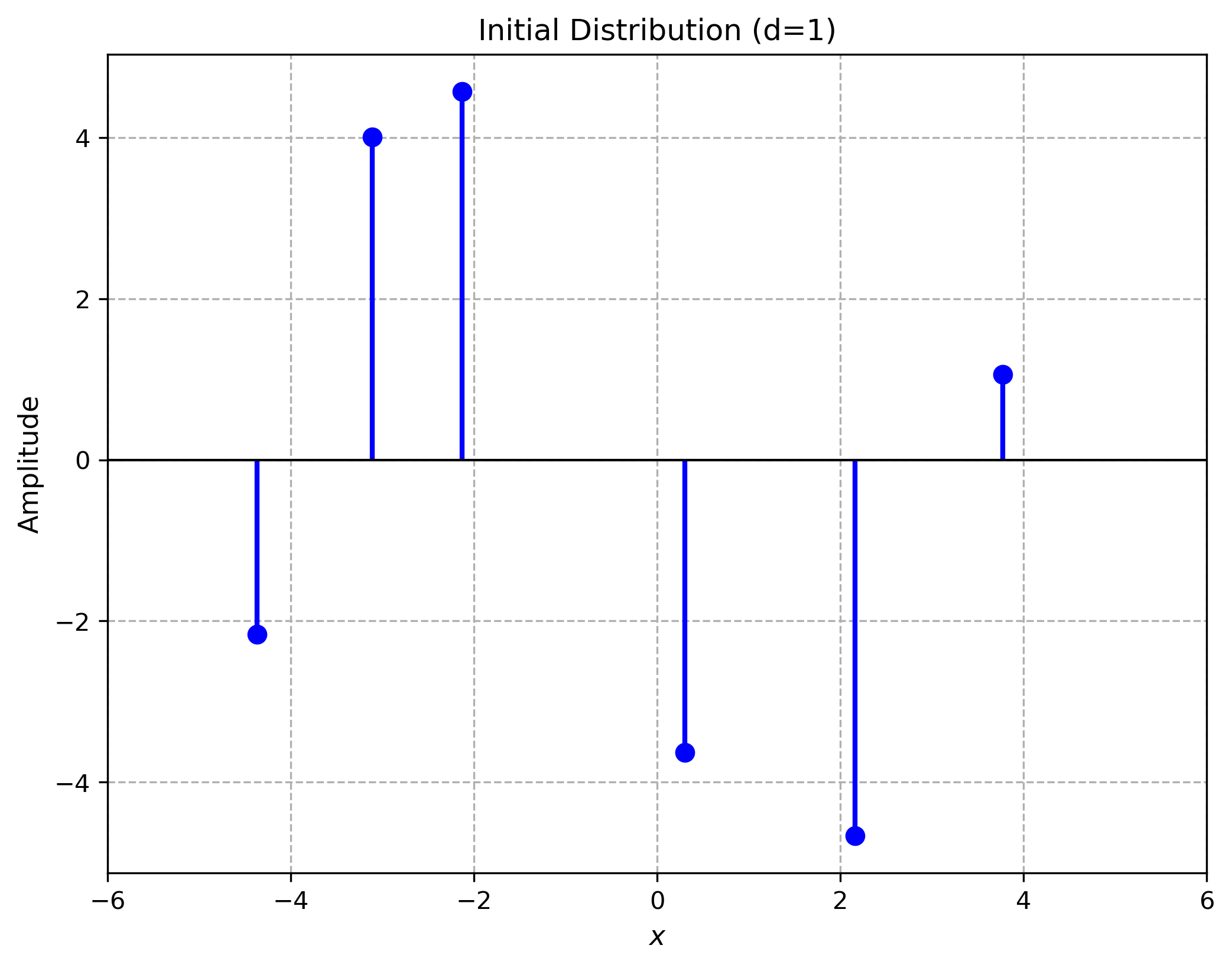}
    \caption{Initial distribution in 1D.}
    \label{fig:init-1d}
  \end{subfigure}
  \hfill
  \begin{subfigure}[b]{0.45\textwidth}
    \centering
    \includegraphics[width=\linewidth]{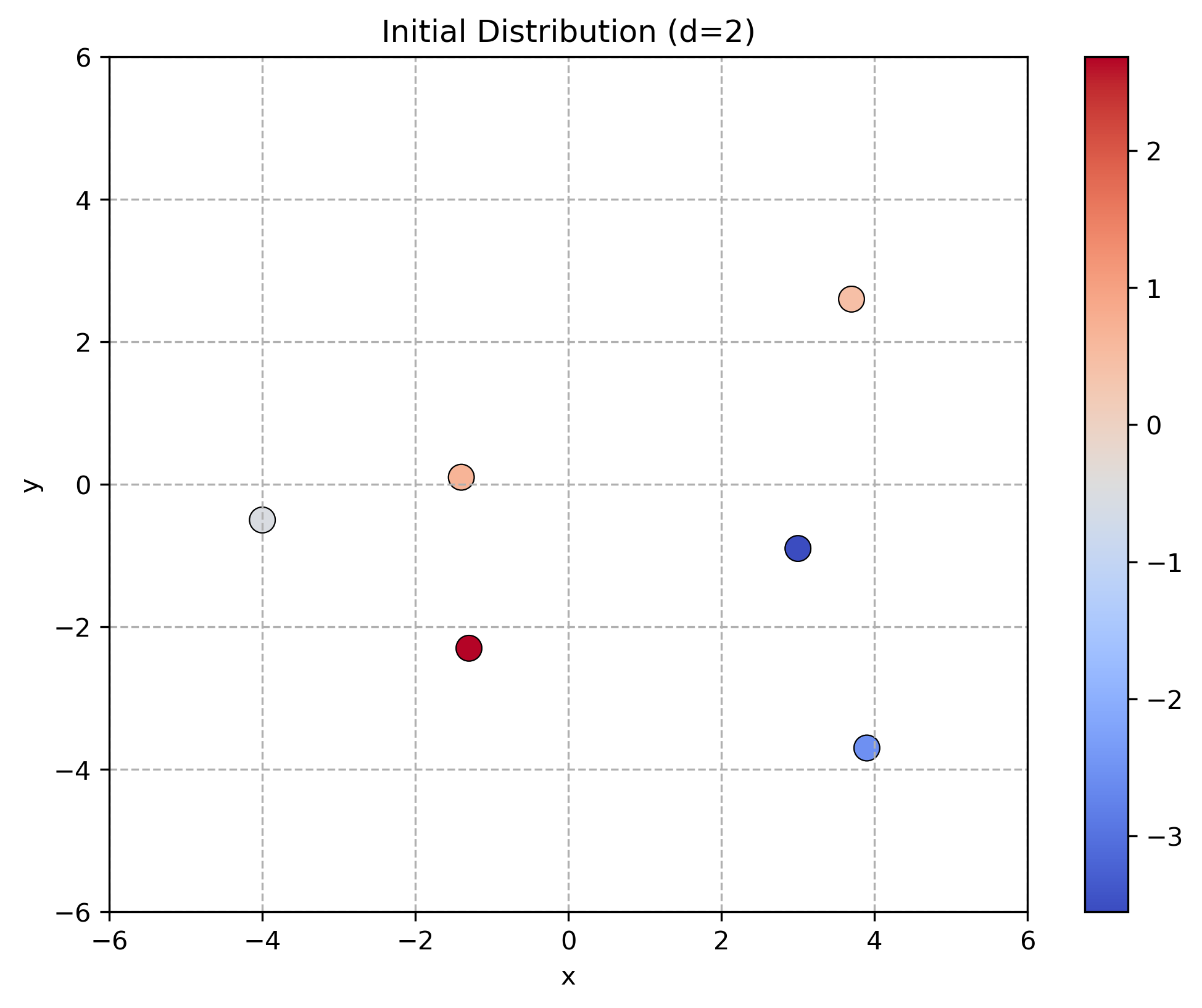}
    \caption{Initial distribution in 2D.}
    \label{fig:term-1d}
  \end{subfigure}

  \vspace{1em}  

  \begin{subfigure}[b]{0.45\textwidth}
    \centering
    \includegraphics[width=\linewidth]{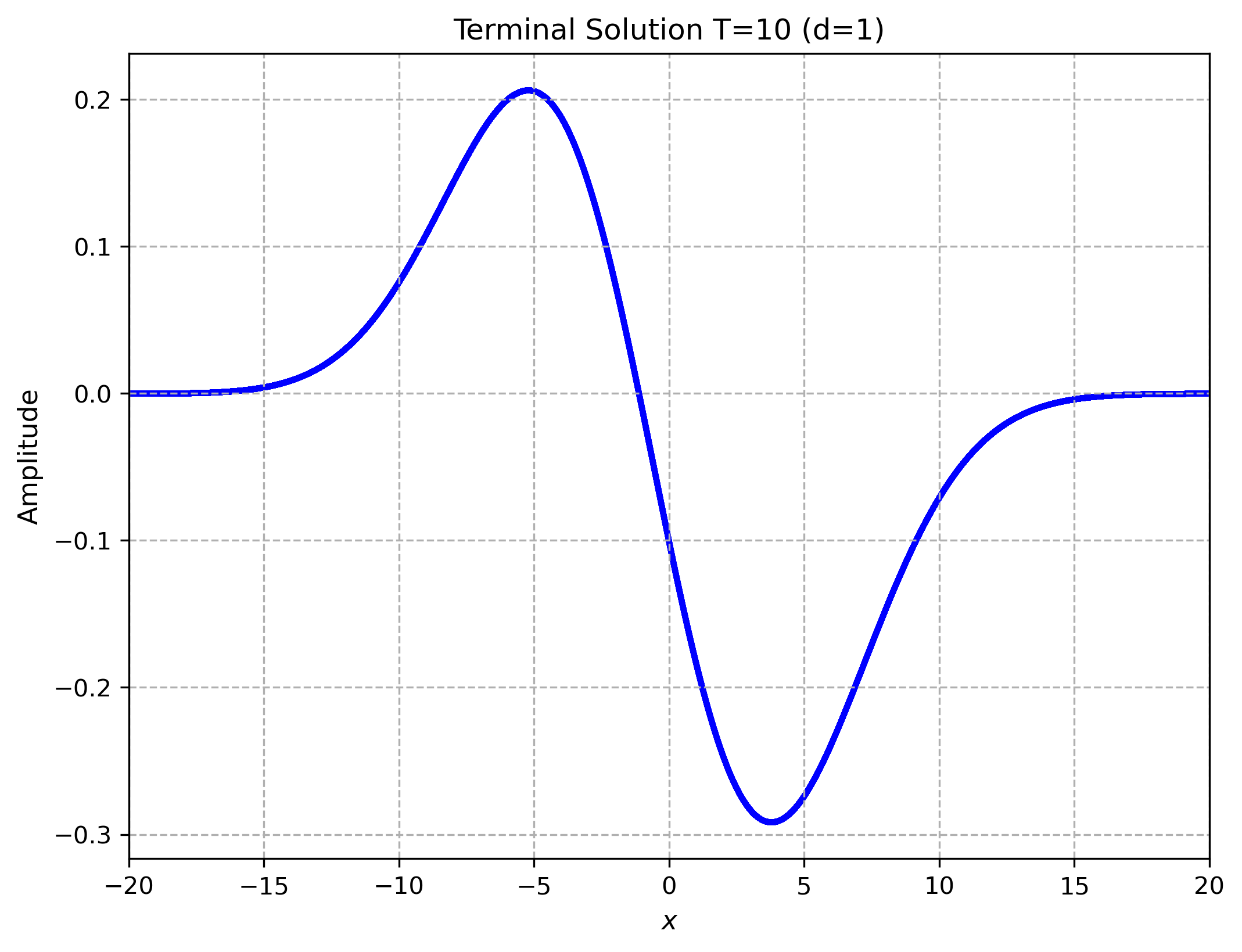}
    \caption{Terminal solution in 1D.}
    \label{fig:init-2d}
  \end{subfigure}
  \hfill
  \begin{subfigure}[b]{0.45\textwidth}
    \centering
    \includegraphics[width=\linewidth]{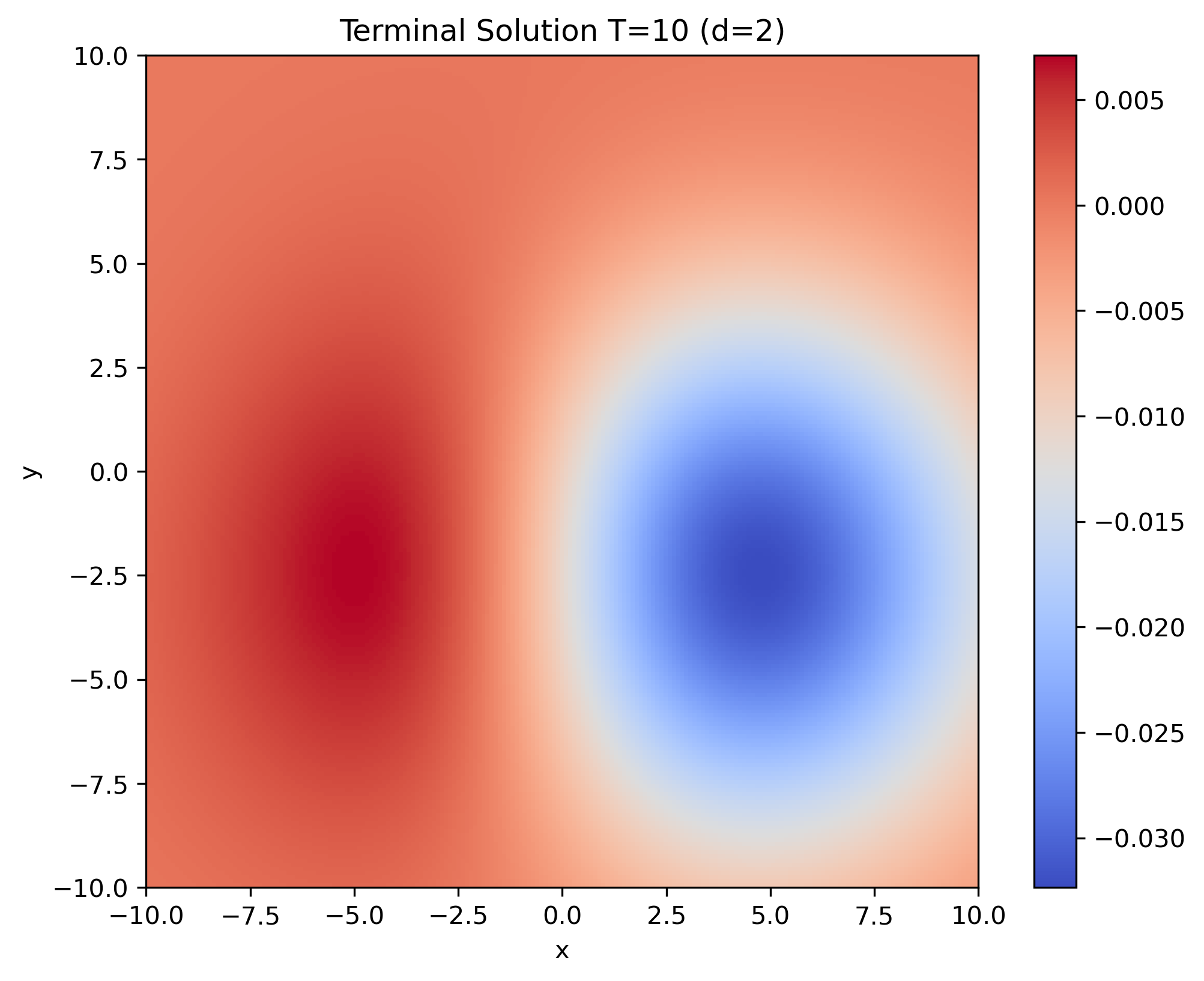}
    \caption{Terminal solution in 2D.}
    \label{fig:term-2d}
  \end{subfigure}

  \caption{Initial conditions and terminal solutions ($T=10$) of \eqref{eq:heat}.}
  \label{fig:initial-final}
\end{figure}

\subsubsection{Quadrature of moments}
 We present here the settings and observations of quadrature methods to obtain the moments in the one-dimensional case; the behavior in two dimensions is qualitatively similar.

For the uniform quadrature method, we integrate over the domain \([-50, 50]\) and vary the number of discretization points \( n \) from 2 to 100. For the Gauss--Hermite quadrature, we use up to the first 100 weights and nodes associated with the Hermite polynomials. Figure~\ref{fig:quadrature}(a) compares the error of these two methods for the fourth-order moment of \( u(\cdot, T) \), with \( T = 10 \) fixed. The results clearly show that Gauss--Hermite quadrature achieves greater accuracy than the uniform method for the same number of sensors. However, when the number of sensors is sufficiently large, both quadrature schemes yield reliable moment estimates at \( T = 10 \). This implies that for moderately sized terminal times, a sufficiently dense uniform grid can replace Gauss--Hermite quadrature without the need to move sensor locations according to the Gaussian weights in \eqref{eq:Gauss}. 

For larger values of \( T \), as shown in Figure~\ref{fig:quadrature}(b), the errors from both quadrature methods increase, but Gauss--Hermite quadrature remains significantly more stable than the uniform approach. In fact, for large \( T \), the uniform method requires both a larger integration domain and a substantially greater number of sensors to achieve comparable accuracy.

In Figure~\ref{fig:quadrature}(c), we compare the moment errors at time \(0\) obtained via Gauss--Hermite quadrature combined with inversion of the moment equation~\eqref{eq:ODE_moments}. The error is defined by
\begin{equation}\label{eq:error}
    \|M(0) - e^{-TA} \, \mathbf{y}\|_{\infty},
\end{equation}
where \(A\) is the generator of the semigroup in~\eqref{eq:ODE_moments}, and \(\mathbf{y} = (y_0, \ldots, y_k)\) is computed using Gauss--Hermite quadrature with 100 sensors.

\begin{figure}[h]
  \centering
  \begin{subfigure}[b]{0.45\textwidth}
    \centering
    \includegraphics[width=\linewidth]{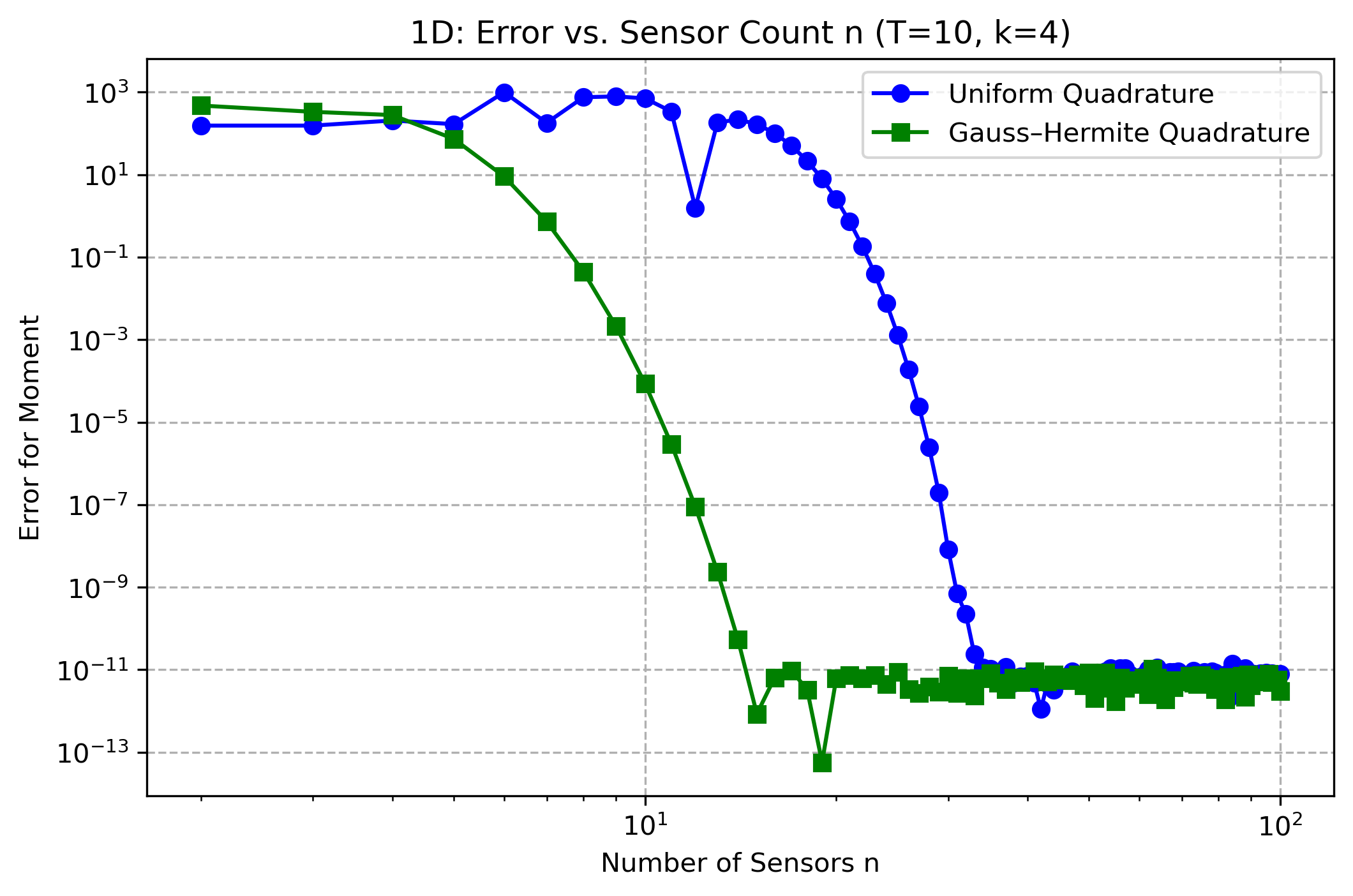}
    \caption{Error of the fourth moment as a function of the sensor count \(n\) for the fixed terminal time \(T = 10\).}
    \label{fig:moment-1d}
  \end{subfigure}
  \hfill
  \begin{subfigure}[b]{0.45\textwidth}
    \centering
    \includegraphics[width=\linewidth]{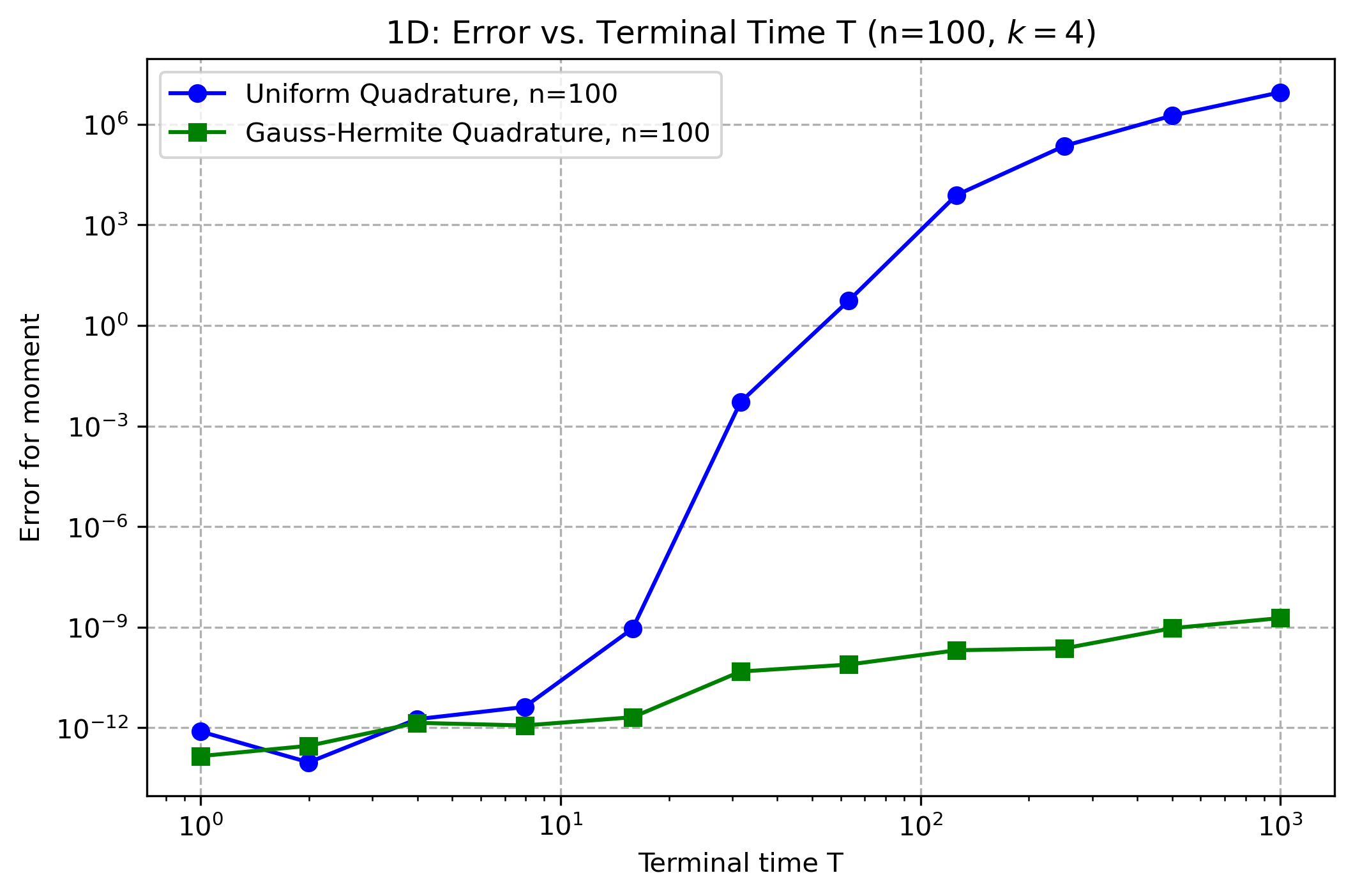}
    \caption{Error of the fourth moment as a function of the terminal time \(T\) with a fixed number of sensors \(n = 100\).}
    \label{fig:moment-t-1d}
  \end{subfigure}

  \vspace{2em}

  \begin{subfigure}[b]{0.45\textwidth}
    \centering
    \includegraphics[width=\linewidth]{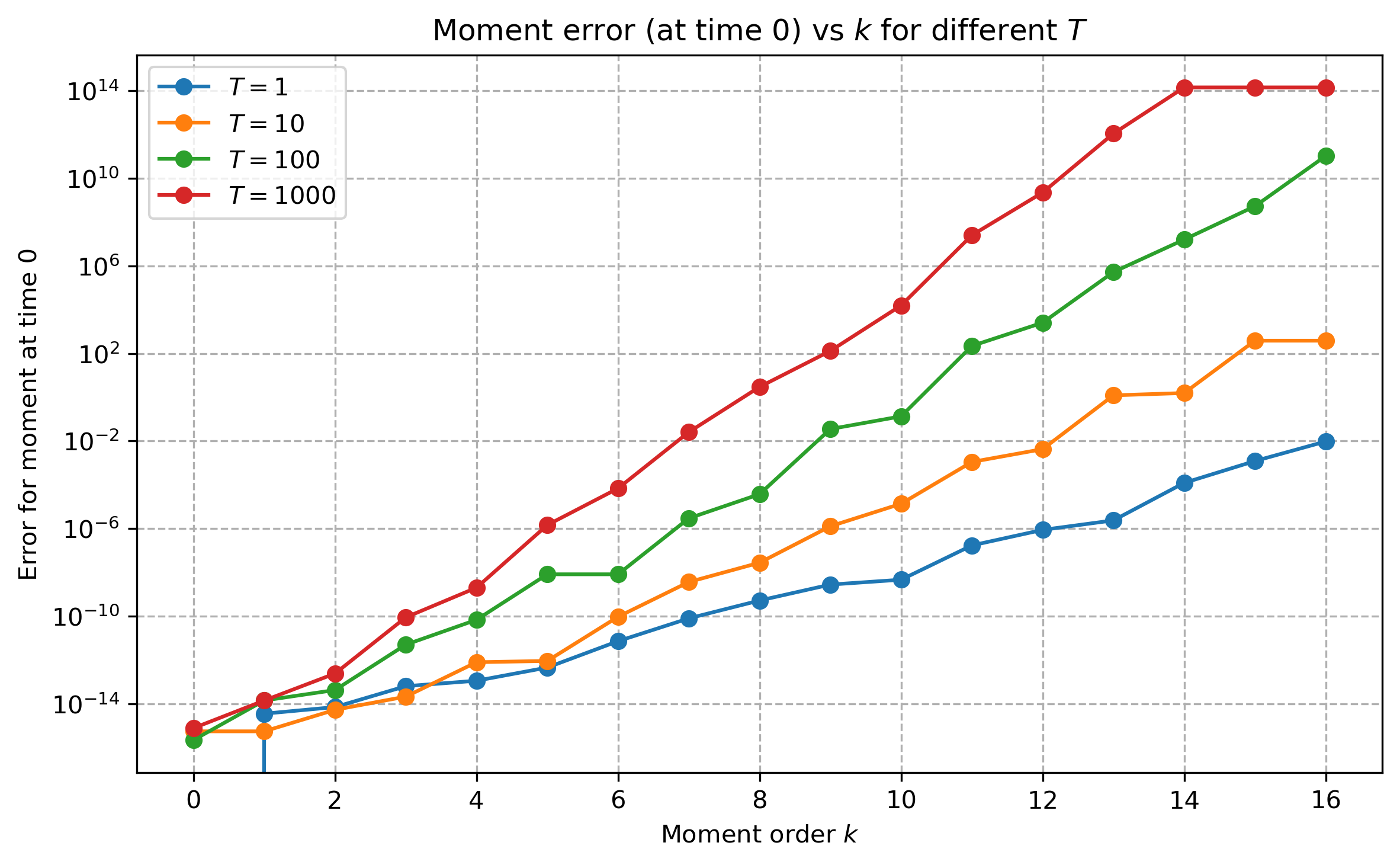}
    \caption{Error of the \(k\)-th moment at time 0 (see \eqref{eq:error}) for \(k = 0,\dots,16\) with terminal times \(T = 1, 10, 100, 1000\), using the Gauss--Hermite method with \(n = 100\) sensors.}
    \label{fig:moment-0-1d}
  \end{subfigure}

  \caption{Moment errors obtained with the quadrature methods.  
           (a) Varying sensor count \(n \in \{1,\dots,100\}\) at fixed \(T = 10\).  
           (b) Varying terminal time \(T \in [1, 1000]\) at fixed \(n = 100\).  
           (c) Joint influence of moment order \(k\) and terminal time for the Gauss--Hermite scheme.}
  \label{fig:quadrature}
\end{figure}

\subsubsection{Discretization and optimization parameters}
Based on our previous quadrature results, we use the values \(y_{k}\) obtained via Gauss--Hermite quadrature with \(10^2\) and \(10^4\) sensors in one and two dimensions, respectively.  The a priori domain \(\Omega\) for the initial distribution is taken to be \([-5,5]\) in one dimension and \([-5,5]^2\) in two dimensions.  

We then discretize \(\Omega\) using a uniform mesh, yielding the discrete domain \(\Omega_h\) and the fully discretized problem \eqref{pb:OC_dis}.  In the one-dimensional case, this mesh is a unisolvent set of degrees of discretization.  In two dimensions, the uniform mesh achieves results comparable to those obtained with the Padua points, so we opt for the simpler uniform grid. The matrix \( B \) in the discretized problem \eqref{pb:OC_dis} has dimensions \( N \times \binom{k+d}{d} \). This matrix must be stored in the computer's RAM when applying the simplex method. To accommodate this constraint, we set \( N = 10^3 \) for the one-dimensional case and \( N = 10^4 \) for the two-dimensional case. These choices limit the maximum allowable moment order \( k \), as larger values can lead to memory overflow. All numerical simulations are performed on a laptop equipped with 16 GB of RAM. To avoid any loss of numerical precision in the matrix \( B \), we fix the maximum moment order to \( k_{\mathrm{max}} = 16 \) in one dimension and \( k_{\mathrm{max}} = 10 \) in two dimensions. For the 1D case, we test with terminal times \( T = 1, 10, 100, 1000 \), and for the 2D case, $T$ is set to $100$. The table below summarizes all the parameters used in our experiments.

\begin{table}
\caption{\label{tab:experiment-parameters}Summary of parameters used in the numerical experiments.}
\begin{tabular*}{\textwidth}{@{}lll}
\br
Parameter & 1D Case & 2D Case \\
\mr
Quadrature method & Gauss--Hermite & Gauss--Hermite \\
Number of sensors $n$ & \(10^2\) & \(10^4\) \\
A priori domain \(\Omega\) & \([-5, 5]\) & \([-5, 5]^2\) \\
Discretization method & Uniform mesh (unisolvent) & Uniform mesh \\
Discretization size \(N\) & \(10^3\) & \(10^4\) \\
Max moment order \(k_{\mathrm{max}}\) & 16 & 10 \\
Terminal times \(T\) & 1, 10, 100, 1000 & 100 \\
\mr
Hardware & \multicolumn{2}{@{}l}{\;\, Intel(R) Core(TM) i5-1335U, 16 GB RAM} \\
\br
\end{tabular*}
\end{table}

\subsection{Numerical results}

\subsubsection{The one-dimensional case} 
We first vary the final time $T \in \{1,10,100,1000\}$, the level of pointwise
observational noise with standard deviations $10^{-32}$, $10^{-16}$,
$10^{-8}$, and $10^{-4}$, and the moment order $k\in\{0,\dots,16\}$.  
For each pair $(T,\mathrm{noise})$, we solve the discretized optimization problem and compute the Wasserstein distance $W_1$ between the reconstructed and the true initial distributions. To better assess the robustness of our method, independently of the overall amplitude scale, we report the normalized error $W_1/\|u_0^\ast\|_{\mathrm{TV}}$, where $\|u_0^\ast\|_{\mathrm{TV}}$ is the total variation of the true initial distribution.

The optimal moment order $k^*$ and the corresponding normalized error $W_1^*/\|u_0^\ast\|_{\mathrm{TV}}$ for each time--noise pair are reported in
Table~\ref{tab:noise-vs-T}.  
A clear trend emerges: the optimal moment order $k^*$ decreases as either
the final time $T$ increases or the observation noise level becomes larger,
and the normalized error $W_1^*/\|u_0^\ast\|_{\mathrm{TV}}$ deteriorates accordingly. This behavior is consistent with our estimate \eqref{eq:conv} and the 
discussion in Remark~\ref{rem:moment_opt}. 

For large final times such as $T = 1000$, even a very small pointwise noise 
(e.g., with standard deviation $10^{-8}$) leads to a relatively large reconstruction error.
In practice, to achieve an accurate reconstruction in this regime, it is therefore necessary that the observational noise be much smaller than the amplitude of the heat solution at time $T$.

Table~\ref{tab:dirac-comparison} provides a more detailed view in the noise-free setting (up to machine precision).
For each $T$, we select the optimal moment order $k^*$ from $k=0,\dots,16$ and list the six Dirac components
with largest amplitudes, comparing their recovered positions and amplitudes with the exact ones.
For $T=1$ and $T=10$, the agreement is essentially exact, while for $T=100$ and $T=1000$ the amplitudes deteriorate
but the recovered support points remain close to the true locations.
This confirms that, even in the large-time regime where $W_1^*/\|u_0^\ast\|_{\mathrm{TV}}$ remains of order $10^0$,
the method still captures meaningful information on the underlying support, which could be further exploited
by a sparse post-processing or local refinement strategy.

\begin{table}[h!]
\centering
\footnotesize
\caption{Numerical results for the 1D case: Optimal moment order $k^*$ and corresponding normalized Wasserstein 
distance $W_1^*/\|u_0^\ast\|_{\mathrm{TV}}$ for each noise level and final time $T$.}
\label{tab:noise-vs-T}
\setlength{\tabcolsep}{9pt}
\renewcommand{\arraystretch}{1.2}

\begin{tabular}{c ccccccccc}
\toprule
\multirow{2}{*}{Noise std} 
 & \multicolumn{2}{c}{$T = 1$}
 & \multicolumn{2}{c}{$T = 10$}
 & \multicolumn{2}{c}{$T = 100$}
 & \multicolumn{2}{c}{$T = 1000$} \\
\cmidrule(lr){2-3}\cmidrule(lr){4-5}\cmidrule(lr){6-7}\cmidrule(lr){8-9}
 & $k^*$ & $W_1^*/\|u_0^\ast\|_{\mathrm{TV}}$ 
 & $k^*$ & $W_1^*/\|u_0^\ast\|_{\mathrm{TV}}$ 
 & $k^*$ & $W_1^*/\|u_0^\ast\|_{\mathrm{TV}}$
 & $k^*$ & $W_1^*/\|u_0^\ast\|_{\mathrm{TV}}$ \\
\midrule

$10^{-32}$ 
 & 16 & $9.16\times10^{-9}$
 & 14 & $2.96\times10^{-7}$
 & 12 & $1.18\times10^{-1}$
 & 10 & $2.87\times10^{-1}$ \\

$10^{-16}$ 
 & 14 & $3.00\times10^{-3}$
 & 10 & $3.18\times10^{-1}$
 & 7  & $5.57\times10^{-1}$
 & 5  & $7.12\times10^{-1}$ \\

$10^{-8}$  
 & 8  & $4.46\times10^{-1}$
 & 5  & $6.97\times10^{-1}$
 & 3  & $8.21\times10^{-1}$
 & 3  & $9.85\times10^{-1}$ \\

$10^{-4}$  
 & 4  & $7.32\times10^{-1}$
 & 2  & $1.22\times10^{0}$
 & 1  & $1.40\times10^{0}$
 & 0  & $1.43\times10^{0}$ \\
\bottomrule
\end{tabular}
\end{table}

\begin{table}[h!]
\centering
\footnotesize
\caption{Numerical results for the 1D case: Exact and recovered Dirac Positions (Pos) and amplitudes (Amp) for different final times $T$. In this simulation, the pointwise observation error is zero (up to machine precision). For each $T$, the optimal moment order $k^*$ is selected from $k=0,\dots,16$. The Dirac components are sorted in decreasing order of amplitude, and the table reports the six largest ones (matching the support size of the exact distribution). All numerical values are rounded to two decimal digits.}
\label{tab:dirac-comparison}
\setlength{\tabcolsep}{9pt}
\renewcommand{\arraystretch}{1.2}

\begin{tabular}{c ccccccccccc}
\toprule
\multirow{2}{*}{Index}
  & \multicolumn{2}{c}{Exact}
  & \multicolumn{2}{c}{$T = 1$}
  & \multicolumn{2}{c}{$T = 10$}
  & \multicolumn{2}{c}{$T = 100$}
  & \multicolumn{2}{c}{$T = 1000$} \\
\cmidrule(lr){2-3}\cmidrule(lr){4-5}\cmidrule(lr){6-7}\cmidrule(lr){8-9}\cmidrule(lr){10-11}
 & Pos & Amp & Pos & Amp & Pos & Amp & Pos & Amp & Pos & Amp \\
\midrule
1 & -4.37 & -2.16 & -4.37 & -2.16 & -4.37 & -2.16 & -4.40 & -2.06 & -4.66 & -1.98 \\
2 & -3.11 &  4.01 & -3.11 &  4.01 & -3.11 &  4.01 & -2.99 &  4.75 & -3.20 &  0.83 \\
3 & -2.13 &  4.57 & -2.13 &  4.57 & -2.13 &  4.57 & -2.00 &  3.79 & -2.46 &  6.90 \\
4 &  0.30 & -3.63 &  0.30 & -3.63 &  0.30 & -3.63 &  0.26 & -3.66 &  0.38 & -3.38 \\
5 &  2.16 & -4.67 &  2.16 & -4.67 &  2.16 & -4.67 &  2.15 & -4.70 &  2.13 & -4.60 \\
6 &  3.77 &  1.06 &  3.77 &  1.06 &  3.77 &  1.06 &  3.77 &  1.05 &  3.83 &  0.99 \\
\bottomrule
\end{tabular}
\end{table}

\subsubsection{The two-dimensional case}
In the two-dimensional case, we fix the maximum moment order to \(k_{\mathrm{max}}=10\) and the terminal time to \(T=100\).  Consequently, the discretized solution may exhibit up to 
\(
\binom{12}{2} = 66
\)
support points, far more than the true distribution.  Nevertheless, these points cluster tightly around a few centers, which serve as excellent approximations to the true Dirac support locations of \(u_0^*\). To reduce this redundancy, we merge any points whose pairwise distance is below a threshold \(0.02\).  Each resulting cluster is then replaced by its amplitude-weighted barycenter, with the cluster amplitude equal to the sum of its constituent amplitudes.  This simple post-processing step substantially improves the accuracy of the recovered measure.

Figure~\ref{fig:solutions-2d} illustrates the performance of the recovered distribution \(u_0^k\) obtained by solving \eqref{pb:OC_dis} and the post-processing step.
In Figure~\ref{fig:solutions-2d}(A), the \(W_1\) error decreases initially, exhibits a sharp drop between \(k=6\) and \(k=8\), and then begins to increase again from \(k=9\).  
  Figure~\ref{fig:solutions-2d}(B) reveals that at \(k=6\), the recovered support points are broadly scattered, although the most significant locations (dark markers) are already detected. At \(k=7\), these points begin to cluster around the key positions, and by \(k=8\), the groups have merged very close to the true support points of \(u_0^*\).

\begin{figure}[htbp]
  \centering
  \begin{subfigure}[t]{0.6\textwidth}
    \centering
    \includegraphics[width=\textwidth]{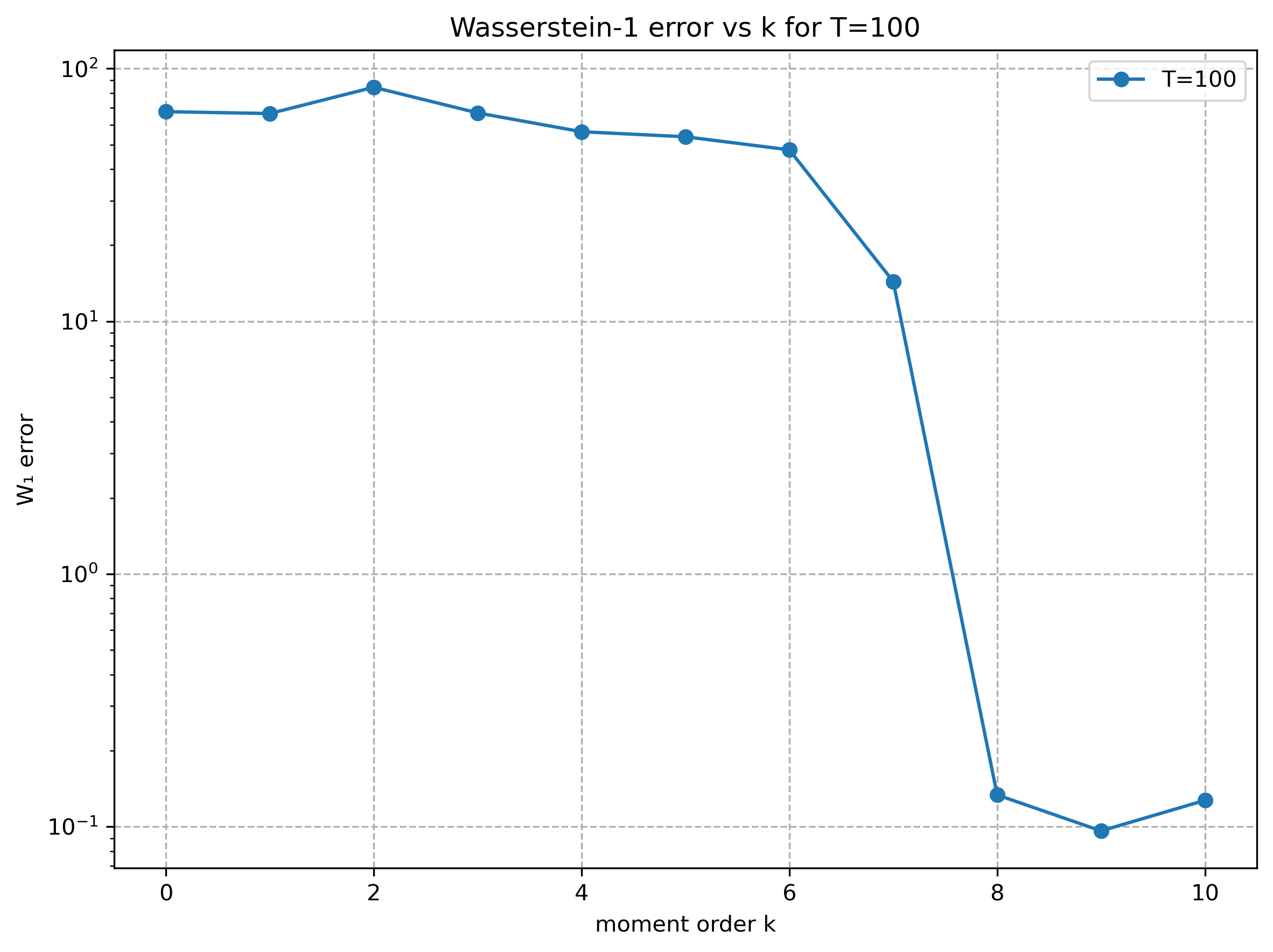}
    \caption{Evolution of \(W_1(u_0^k,u_0^*)\) by \(k\) for terminal time \(T=100\).}
  \end{subfigure}

  \vspace{2em}

  \begin{subfigure}[t]{1\textwidth}
    \centering
    \includegraphics[width=\textwidth]{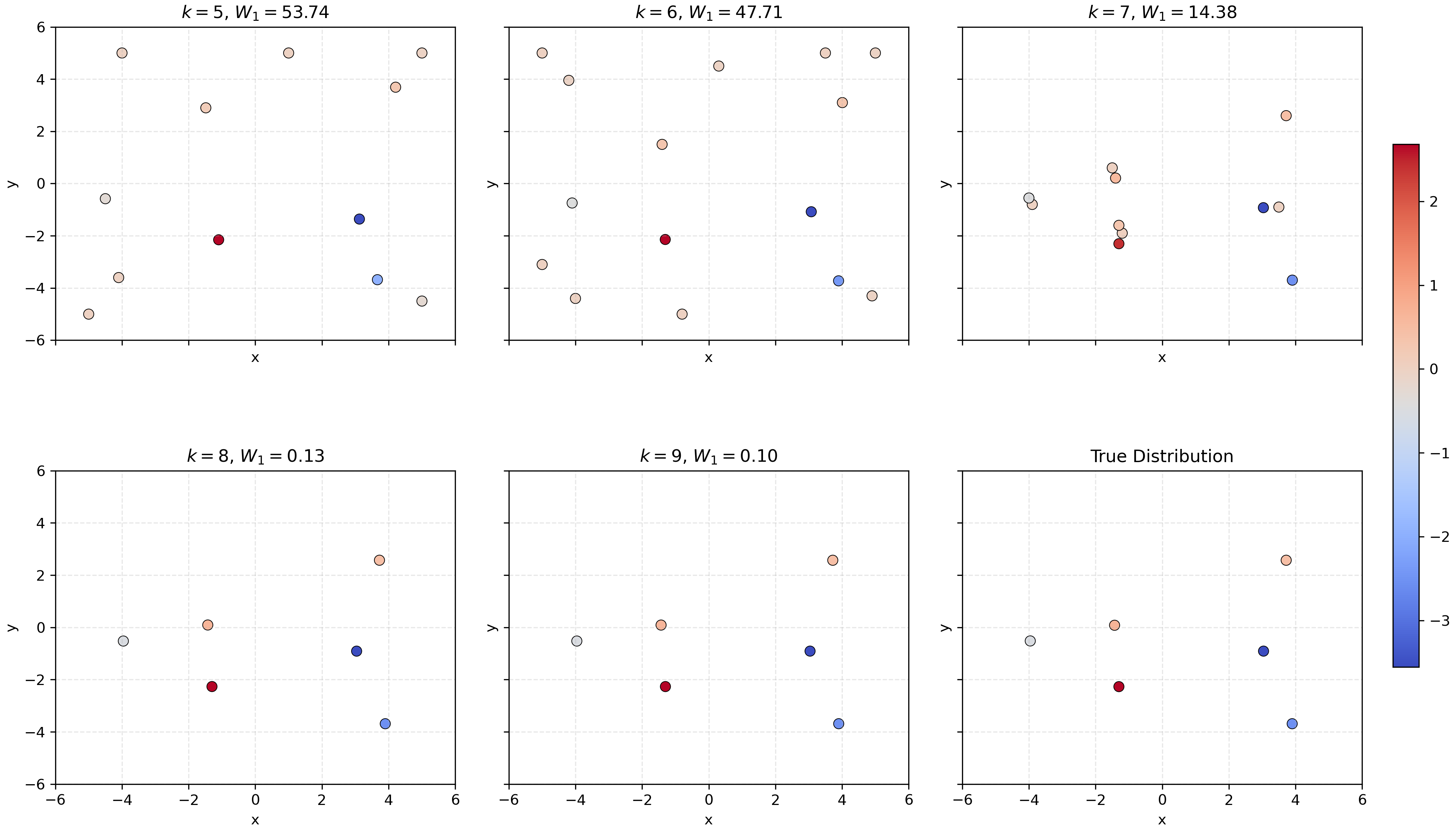}
    \caption{Solutions of \eqref{pb:OC} (after clustering) with  $k=5,\ldots,9$, $T=100$, and the true distribution.}
  \end{subfigure}

  \caption{%
    Numerical results for the 2D case: (A) Values of \(W_1(u_0^k,u_0^*)\) for \(k\in[0,10]\) and \(T=100\).%
    \quad
    (B) Recovered initial distributions \(u_0^k\) for $k=5,\ldots,9$, versus the true \(u_0^*\).%
  }
  \label{fig:solutions-2d}
\end{figure}

\section{Technical lemmas and proofs}\label{sec_proof}

\subsection{A Representer Theorem}
We first recall a representer theorem from \cite[Thm.\@ 1]{fisher1975spline}, which plays the essential role in the proof of Theorem \ref{thm:existence}.
\begin{thm}[Fisher-Jerome 75]\label{thm:representation}
Let $\Omega$ be a compact set in $\R^d$. Let $N\in \mathbb{Z}_+$ and $l_i \colon \Omega \to \mathbb{R}$ be  continuous functions for $i=1,\ldots,N$. Consider the following optimization problem:
\begin{equation}\label{pb:total_variation}
    \inf_{\mu\in \mathcal{M}(\Omega)} \|\mu\|_{\mathrm{TV}}, \qquad \mathrm{s.t. }\, \int_{\Omega} l_i(\theta) d\mu(\theta) \in I_i, \quad \mathrm{for\, }i=1,\ldots,N,
\end{equation}
where $I_i$ is a compact interval or a singleton in $\R$ for $i=1,\ldots,N$.
Assume that the feasible set of problem \eqref{pb:total_variation} is non-empty. Then, its solution set is non-empty, convex, and compact in the weak-$*$ sense. Moreover, the extreme points of the solution set of \eqref{pb:total_variation} are of the form: 
   \begin{equation*}
       \mu^* = \sum_{i=1}^N \omega_i\delta_{x_i},
   \end{equation*}
   where $\omega_i\in \R$ and $x_i\in \Omega$ for $i=1,\ldots, N$.
\end{thm}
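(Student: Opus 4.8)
The plan is to separate the statement into a \emph{topological} part---that the solution set $S$ of \eqref{pb:total_variation} is non-empty, convex, and weak-$*$ compact---and a \emph{structural} part identifying its extreme points as finite atomic measures. First I would exploit the duality $\mathcal{M}(\Omega) = \mathcal{C}(\Omega)^*$. Since each $l_i$ is continuous, the functionals $\mu \mapsto \int_\Omega l_i\, d\mu$ are weak-$*$ continuous, so the feasible set $\{\mu : \int_\Omega l_i\, d\mu \in I_i,\ i=1,\ldots,N\}$ is the weak-$*$ closed preimage of the closed box $\prod_i I_i$. The total variation norm, being a dual norm, is weak-$*$ lower semicontinuous. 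Thus a minimizing sequence $(\mu_n)$ is bounded in TV norm, hence weak-$*$ precompact by Banach--Alaoglu (using that $\mathcal{C}(\Omega)$ is separable, so the weak-$*$ topology is metrizable on bounded sets); any cluster point is feasible by closedness and attains the infimum by lower semicontinuity, giving non-emptiness. Convexity of $S$ is immediate as the minimizer set of a convex functional over a convex feasible set, and compactness follows since $S$ is a weak-$*$ closed subset of the ball $\{\|\mu\|_{\textnormal{TV}} \le v^*\}$, where $v^*$ is the optimal value.

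For the structural part I would invoke the Krein--Milman theorem: $S$ is a compact convex subset of the locally convex space $(\mathcal{M}(\Omega), w\textnormal{-}*)$, hence it has extreme points and equals the weak-$*$ closed convex hull of them. It then remains to show that every extreme point $\mu^*$ is atomic with at most $N$ atoms. My strategy is a perturbation/dimension-counting argument that uses \emph{both} the minimality and the extremality of $\mu^*$. Assume, for contradiction, that $|\mu^*|$ charges $N+1$ pairwise disjoint Borel sets $E_1,\ldots,E_{N+1}$, i.e. $|\mu^*|(E_j)>0$, and consider perturbations $\nu = \sum_{j=1}^{N+1} c_j\, \mu^*|_{E_j}$. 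The constraint variations $\int_\Omega l_i\, d\nu = \sum_j c_j \int_{E_j} l_i\, d\mu^*$ define a linear map $\mathbb{R}^{N+1}\to\mathbb{R}^N$ with nontrivial kernel; picking $0\neq c$ in it freezes every constraint value, so $\mu^*+t\nu$ stays feasible for all $t$. For $|t|$ small the sign of $\mu^*$ is unchanged on each $E_j$, so $\|\mu^*+t\nu\|_{\textnormal{TV}} = \|\mu^*\|_{\textnormal{TV}} + t\sum_j c_j\,|\mu^*|(E_j)$. If $\sum_j c_j|\mu^*|(E_j)\neq 0$, one sign of $t$ strictly lowers the TV norm while preserving feasibility, contradicting minimality; if the sum vanishes, then $\mu^*\pm t\nu$ are two distinct feasible minimizers whose midpoint is $\mu^*$, contradicting extremality. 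Hence $|\mu^*|$ cannot charge $N+1$ disjoint sets: its continuous part vanishes and it carries at most $N$ atoms, yielding $\mu^* = \sum_{i=1}^N \omega_i \delta_{x_i}$ (padding with zero weights if fewer atoms occur).

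The main obstacle I anticipate is precisely this extreme-point characterization, and within it the reduction from a general Radon measure to an atomic one. The finite-dimensional kernel count is transparent once $\mu^*$ is already atomic, but excluding a continuous component requires the observation that such a component can be split into arbitrarily many pieces of positive mass, each of which feeds the same perturbation scheme; this is what turns the count into a genuine constraint on the number of atoms. A secondary point demanding care is that the perturbation must keep each membership $\int_\Omega l_i\, d\mu \in I_i$ \emph{exactly}, not merely to first order---hence perturbing along the kernel of the constraint map, which freezes the constraint values and guarantees feasibility on a full interval of $t$ regardless of whether $I_i$ is a singleton or a genuine interval. By contrast, the weak-$*$ bookkeeping of the first part is routine given Banach--Alaoglu and the lower semicontinuity of the dual norm.
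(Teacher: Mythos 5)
Your proof is correct. Note, however, that the paper itself offers no proof of this statement: it is recalled verbatim from Fisher--Jerome (1975) and used as a black box, so there is nothing in the text to compare against line by line. What you have written is a valid, self-contained reconstruction along the classical lines of that reference. The topological half is handled exactly as one would expect (weak-$*$ continuity of $\mu\mapsto\int l_i\,d\mu$ for continuous $l_i$, Banach--Alaoglu plus lower semicontinuity of the dual norm, and $S$ realized as the intersection of the weak-$*$ closed feasible set with the ball of radius equal to the optimal value). The structural half is the standard perturbation/dimension-counting argument, and you correctly exploit that the restrictions $\mu^*|_{E_j}$ are mutually singular, so that for small $|t|$ the total variation of $\mu^*+t\nu$ is an \emph{affine} function of $t$; the resulting dichotomy (strict decrease contradicts minimality, constancy contradicts extremality) is airtight, and you rightly flag that $\nu\neq 0$ because the $E_j$ are disjoint. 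The one step that genuinely needs the extra observation you mention --- that a non-atomic component of positive mass can be split into arbitrarily many disjoint pieces of positive mass (Sierpi\'nski's theorem on ranges of non-atomic measures) --- is acknowledged explicitly, so the passage from ``cannot charge $N+1$ disjoint sets'' to ``purely atomic with at most $N$ atoms'' is justified. In short: where the authors cite, you prove, and the proof stands.
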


\begin{lem}\label{lem:moment_exist}
Fix any \(k \ge 0\). Let $\Omega$ be a compact subset of $\R^d$ such that $[-r,r]^d\subseteq \Omega$ for some $r>0$. Consider the moment problem
\begin{equation}\label{pb:moment_standard}
    \inf_{\mu\in \mathcal{M}(\Omega)} \|\mu\|_{\mathrm{TV}}, \qquad \mathrm{subject to } \int_{\Omega} x^{\alpha}\, d\mu(x) = z_{\alpha}, \quad \mathrm{for all } \|\alpha\|_1 \le k,
\end{equation}
where \(\mathbf{z} = (z_\alpha)_{\|\alpha\|_1 \le k} \in \mathbb{R}^{\binom{k+d}{d}}\) is an arbitrary vector. Let \(\mu^*\) be any solution of problem \eqref{pb:moment_standard}. Then, the total variation of \(\mu^*\) satisfies the following a priori estimate:
\begin{equation}\label{eq:total_variation}
    \|\mu^*\|_{\mathrm{TV}} \le e^{\frac{2dk}{r}}\, \|\mathbf{z}\|_{\infty}.
\end{equation}
\end{lem}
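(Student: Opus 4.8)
The plan is to pass to the convex dual of the moment problem~\eqref{pb:moment_standard} and then invoke the polynomial coefficient estimate of Lemma~\ref{lem:poly_coef}. Since $\mathcal{M}(\Omega) = \mathcal{C}(\Omega)^*$ and the total variation norm is precisely the dual norm of the supremum norm, the constraint map $\mu \mapsto (\int_\Omega x^\alpha\, d\mu)_{\|\alpha\|_1 \le k}$ is the adjoint of the finite-rank operator $c \mapsto \sum_{\|\alpha\|_1 \le k} c_\alpha x^\alpha$ from $\mathbb{R}^{\binom{k+d}{d}}$ into $\mathcal{C}(\Omega)$; in particular it is weak-$*$ continuous. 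Before dualizing I would first record that the feasible set of \eqref{pb:moment_standard} is nonempty for every $\mathbf{z}$: choosing $\binom{k+d}{d}$ points forming a unisolvent set of degree $k$ inside $[-r,r]^d \subseteq \Omega$ (which exists because the cube has nonempty interior), the associated Vandermonde system is invertible, so a discrete measure supported on these points matches the prescribed moments. Existence of $\mu^*$ then follows from Theorem~\ref{thm:representation}, and the optimal value equals $\|\mu^*\|_{\textnormal{TV}}$.

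Forming the Lagrangian with multipliers $c = (c_\alpha)_{\|\alpha\|_1\le k}$ and using that $\inf_{\mu}\big(\|\mu\|_{\textnormal{TV}} - \int_\Omega p\, d\mu\big) = 0$ when $\|p\|_{\mathcal{C}(\Omega)}\le 1$ and $=-\infty$ otherwise, I obtain the dual
\begin{equation*}
    \|\mu^*\|_{\textnormal{TV}} = \sup\Big\{ \textstyle\sum_{\|\alpha\|_1\le k} c_\alpha z_\alpha \ :\ \big\|\sum_{\|\alpha\|_1\le k} c_\alpha x^\alpha\big\|_{\mathcal{C}(\Omega)} \le 1 \Big\}.
\end{equation*}
For any dual-feasible $c$, set $p(x) = \sum_{\|\alpha\|_1\le k} c_\alpha x^\alpha$, so that $\|p\|_{\mathcal{C}([-r,r]^d)} \le \|p\|_{\mathcal{C}(\Omega)} \le 1$ because $[-r,r]^d \subseteq \Omega$. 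Lemma~\ref{lem:poly_coef}, applied on the cube $[-r,r]^d$, then yields $\sum_{\|\alpha\|_1\le k} |c_\alpha| \le e^{2dk/r}\,\|p\|_{\mathcal{C}([-r,r]^d)} \le e^{2dk/r}$. A single application of H\"older's inequality gives $\sum_\alpha c_\alpha z_\alpha \le \|\mathbf{z}\|_\infty \sum_\alpha |c_\alpha| \le e^{2dk/r}\,\|\mathbf{z}\|_\infty$, and taking the supremum over all dual-feasible $c$ produces the claimed bound \eqref{eq:total_variation}.

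The one genuinely delicate point is the strong duality equality displayed above: weak duality only exhibits the dual value as a \emph{lower} bound for $\|\mu^*\|_{\textnormal{TV}}$, whereas I need the equality in order to turn the dual into an \emph{upper} bound. I expect this to be the main obstacle, and would justify the absence of a duality gap through standard convex duality (Fenchel--Rockafellar), which applies here because there are only finitely many linear equality constraints, their range is finite-dimensional (hence closed), and the primal is feasible by the unisolvent-set argument above. An alternative route that avoids duality altogether is to construct a feasible measure directly from a unisolvent quadrature set and bound the norm of the inverse Vandermonde map; however, the dual argument is cleaner and reuses Lemma~\ref{lem:poly_coef} verbatim, so I would favor it.
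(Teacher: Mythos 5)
Your proposal is correct and follows essentially the same route as the paper: pass to the dual of the moment problem, invoke Fenchel--Rockafellar for strong duality, and combine H\"older's inequality with the coefficient bound of Lemma~\ref{lem:poly_coef} on the cube $[-r,r]^d$. The only difference is that you make the feasibility of the primal explicit via a unisolvent-set construction, a point the paper handles implicitly through the linear independence of the monomials.
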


\begin{proof}
Since $\Omega$ contains a nonempty interior, the collection of monomials 
\(
\{x^{\alpha} \colon \Omega \to \mathbb{R}\}_{\|\alpha\|_1 \le k}
\)
is linearly independent. Hence, the existence of a solution $\mu^*$ follows from Theorem~\ref{thm:representation}.

The dual problem of \eqref{pb:moment_standard} is given by (see \cite[Sec.~2.3]{duval2015exact})
\begin{equation}\label{pb:dual}
   \sup_{c_{\alpha}} \; \sum_{\|\alpha\|_1 \le k} z_{\alpha}\,c_{\alpha} \quad \mathrm{subject \,to} \quad \sup_{x\in \Omega}\left|\sum_{\|\alpha\|_1\le k} c_{\alpha}\,x^{\alpha}\right| \le 1.
\end{equation}
By the Fenchel--Rockafellar theorem, strong duality holds (see \cite[Prop.~13]{duval2015exact} for details):
\[
\|\mu^*\|_{\mathrm{TV}} = \mathrm{val}\eqref{pb:moment_standard} = \mathrm{val}\eqref{pb:dual}.
\]
Let $\mathbf{c}^* = (c^*_{\alpha})_{\|\alpha\|_1\le k}$ be a solution of \eqref{pb:dual}. By H\"older's inequality, we have
\[
\mathrm{val}\eqref{pb:dual} \le \|\mathbf{c}^*\|_1 \, \|\mathbf{z}\|_{\infty}.
\]
Moreover,
\[
\sup_{x\in \Omega}\left|\sum_{\|\alpha\|_1\le k} c^*_{\alpha}\,x^{\alpha}\right| \le 1.
\]
Since $\Omega$ contains $[-r,r]^d$, it follows from Lemma~\ref{lem:poly_coef} that
\[
\|\mathbf{c}^*\|_1 \le e^{\frac{2dk}{r}}.
\]
Therefore, the conclusion follows.
\end{proof}

\subsection{Results from approximation theory}

\begin{lem}[Jackson's Theorem in $\R^d$ \cite{newman1964jackson}]\label{lem:Jackson}
   For any \(L\)-Lipschitz function \( v \) defined on \( [-1,1]^d \) and any $k\geq 0$, there exists a multivariable polynomial \( p_k \) of degree less than \( k \) such that
\[
\|v - p_k\|_{\mathcal{C}([-1,1]^d)} \leq \frac{C_d\, L}{2k},
\]
where \( C_d \) is a constant depending only on the dimension \( d \).
\end{lem}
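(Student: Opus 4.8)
The plan is to reduce the multivariate statement to the classical one-dimensional Jackson theorem on $[-1,1]$ by a dimension-by-dimension (tensor-product) argument. The key object is a \emph{linear} scalar Jackson operator $J_m$ mapping $\mathcal{C}([-1,1])$ into the univariate polynomials of degree $<m$, built via the substitution $x=\cos\theta$ followed by convolution with the Jackson kernel (equivalently, a de la Vall\'ee Poussin--type smoothing of the Chebyshev partial sums). This operator is linear, reproduces constants, has operator norm bounded by an absolute constant $C_0$ uniformly in $m$, and satisfies $\|g - J_m g\|_{\mathcal{C}([-1,1])} \le C\,\omega(g;1/m) \le C\,\mathrm{Lip}(g)/m$ for every Lipschitz $g$, where $\omega$ is the modulus of continuity. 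I would take all of this as the known scalar Jackson theorem, which is the content of \cite{newman1964jackson}.

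First I would fix a per-coordinate degree $m \coloneqq \lfloor k/d \rfloor$ and let $T_i$ denote the operator that applies $J_m$ in the $i$-th variable while acting as the identity on the remaining coordinates. Since the $T_i$ act on disjoint variables they commute, and the tensor-product approximant $T_1\cdots T_d v$ is a polynomial whose degree in each variable is at most $m-1$; consequently its \emph{total} degree is at most $d(m-1) \le k-d < k$, which is exactly the required degree bound (the range $k<d$ is trivial). Setting $p_k \coloneqq T_1\cdots T_d\, v$, it remains only to estimate $\|v - p_k\|_{\mathcal{C}([-1,1]^d)}$.

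For the error I would use the telescoping identity
\[
I - T_1\cdots T_d \;=\; \sum_{j=1}^d T_1\cdots T_{j-1}\,(I-T_j),
\]
which, upon taking norms, gives
\[
\|v - p_k\|_{\mathcal{C}([-1,1]^d)} \;\le\; \sum_{j=1}^d \Big(\prod_{i<j}\|T_i\|\Big)\,\big\|(I-T_j)v\big\|_{\mathcal{C}([-1,1]^d)}.
\]
Each factor $\|T_i\| \le C_0$ by the uniform norm bound on the scalar operator, so $\prod_{i<j}\|T_i\| \le C_0^{d-1}$. For the term $\|(I-T_j)v\|$ I would freeze the coordinates other than $x_j$: since $v$ is $L$-Lipschitz on $[-1,1]^d$, each slice $x_j \mapsto v(\dots,x_j,\dots)$ is $L$-Lipschitz on $[-1,1]$ uniformly in the frozen variables, whence the scalar estimate yields $\|(I-T_j)v\| \le C L/m$. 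Summing the $d$ terms and using $m \ge k/(2d)$ for $k\ge 2d$ produces a bound of the form $d\,C_0^{d-1}\,(2Cd)\,L/k$, which is at most $C_d L/(2k)$ after absorbing all $d$-dependent factors into a single constant $C_d$; the small range $k<2d$ is handled by enlarging $C_d$.

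The main obstacle is the bookkeeping of degree versus rate. A naive tensor product run at per-coordinate degree $k$ would inflate the total degree to order $dk$, so the essential point is to run each scalar operator at the reduced degree $m\approx k/d$; this costs an extra factor $d$ in the rate, which combines with the $d$ summands of the telescoping and the $C_0^{d-1}$ from the operator norms to yield the (exponentially dimension-dependent) constant $C_d$. The only genuinely analytic input is the availability of a \emph{linear} scalar Jackson operator with a uniform norm bound --- mere existence of a near-best approximating polynomial would not suffice for the telescoping to close --- but this is precisely what the classical construction furnishes.
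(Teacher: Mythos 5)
The paper offers no proof of this lemma: it is quoted as a known theorem of Newman and Shapiro \cite{newman1964jackson}, so there is no internal argument to compare yours against, and your contribution is a self-contained derivation from the univariate case. Your tensor-product reduction is correct. The telescoping identity $I-T_1\cdots T_d=\sum_{j}T_1\cdots T_{j-1}(I-T_j)$ is valid; it closes precisely because the scalar Jackson operator is \emph{linear} with operator norm uniformly bounded in the degree (indeed one may take $C_0=1$, since the Jackson kernel is nonnegative and normalized), and you rightly flag that mere existence of near-best univariate approximants would not suffice here. Freezing all but one coordinate legitimately transfers the Lipschitz constant $L$ to each slice irrespective of which norm on $\mathbb{R}^d$ defines $\mathrm{Lip}(v)$, since along a coordinate line all $\ell^p$ distances coincide. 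The choice $m=\lfloor k/d\rfloor$ correctly keeps the total degree at most $d(m-1)\le k-d<k$ at the cost of a factor of order $d$ in the rate, and the edge cases $k<2d$ (and the vacuous $k=0$) are properly absorbed into $C_d$. The only trade-off relative to the cited result is quantitative: Newman and Shapiro obtain a constant with much milder dependence on $d$ for the Euclidean Lipschitz condition, whereas your argument yields $C_d$ of order $d^2C_0^{d-1}$, potentially exponential in $d$. Since the paper's final estimate \eqref{eq:conv} already carries constants exponential in $d$ through $C_{d,R}(k)$, and $C_d$ is allowed to be an arbitrary dimension-dependent constant, your weaker constant is entirely adequate for every use the paper makes of the lemma.
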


\begin{cor}\label{cor:approx_poly}
Let $\Omega= [-R,R]^d$ for some $R>0$. For any function \( v \in \mathcal{C}(\Omega) \) satisfying \(\mathrm{Lip}(v) \leq 1\) and any $k\geq 0$, there exists a multivariable polynomial \( p_k \) of degree less than \( k \) such that:
\[
\|v - p_k\|_{\mathcal{C}(\Omega)} \leq \frac{C_d \, R}{2k},
\]
where \( C_d \) is a constant depending only on the dimension \( d \).
\end{cor}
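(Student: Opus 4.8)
The plan is to reduce Corollary~\ref{cor:approx_poly} to Jackson's Theorem (Lemma~\ref{lem:Jackson}) by a rescaling argument that transports the problem from the hypercube $\Omega = [-R,R]^d$ to the reference cube $[-1,1]^d$. First I would introduce the affine change of variables $\phi \colon [-1,1]^d \to \Omega$ defined by $\phi(y) = R\,y$, which is a bijection, and set $w(y) \coloneqq v(\phi(y)) = v(Ry)$. The key bookkeeping step is to track how the Lipschitz constant behaves under this dilation: since $v$ is $1$-Lipschitz and $\phi$ scales distances by the factor $R$, the composite $w$ satisfies $\mathrm{Lip}(w) \le R$. Concretely, for $y_1, y_2 \in [-1,1]^d$ one has $|w(y_1) - w(y_2)| = |v(Ry_1) - v(Ry_2)| \le \|Ry_1 - Ry_2\| = R\,\|y_1 - y_2\|$.

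Next I would apply Lemma~\ref{lem:Jackson} to the rescaled function $w$, which is $R$-Lipschitz on $[-1,1]^d$. This yields a multivariable polynomial $q_k$ of degree less than $k$ such that
\begin{equation*}
    \|w - q_k\|_{\mathcal{C}([-1,1]^d)} \le \frac{C_d\,R}{2k},
\end{equation*}
where $C_d$ is exactly the dimensional constant from Jackson's Theorem. I would then define the candidate approximant on $\Omega$ by pulling back through $\phi^{-1}$, namely $p_k(x) \coloneqq q_k(x/R)$. Because $x \mapsto x/R$ is a linear (hence degree-preserving) change of variables, $p_k$ is again a polynomial of degree less than $k$.

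Finally, the sup-norm bound transfers verbatim because $\phi$ is a bijection between the two cubes: as $x$ ranges over $\Omega$, the point $y = x/R$ ranges over all of $[-1,1]^d$, so that
\begin{equation*}
    \|v - p_k\|_{\mathcal{C}(\Omega)} = \sup_{x\in\Omega}|v(x) - q_k(x/R)| = \sup_{y\in[-1,1]^d}|w(y) - q_k(y)| = \|w - q_k\|_{\mathcal{C}([-1,1]^d)} \le \frac{C_d\,R}{2k},
\end{equation*}
which is precisely the claimed estimate. I do not expect any genuine obstacle here; this is a routine scaling reduction. The only point demanding a little care is the Lipschitz-constant bookkeeping under dilation — verifying that the factor $R$ enters linearly (and not, say, as $R$ absorbed into the norm on the domain or appearing quadratically) — and confirming that the degree of $p_k$ is unchanged by the linear substitution, so that the hypotheses of Lemma~\ref{lem:Jackson} are met and the output polynomial still has degree less than $k$.
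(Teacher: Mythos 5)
Your proof is correct and is exactly the paper's argument: the paper's own proof is the one-line remark that it suffices to apply Lemma~\ref{lem:Jackson} to the rescaled function $x \mapsto v(Rx)$ on $[-1,1]^d$, which is precisely your dilation with Lipschitz constant $R$ and degree-preserving pullback. You have simply written out the bookkeeping that the paper leaves implicit.
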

\begin{proof}
  It suffices to apply Lemma~\ref{lem:Jackson} to the rescaled function \(\bar{v}\colon [-1,1]^d\to \R, \, x \mapsto 
  v (R\, x )\).
\end{proof}

\begin{lem}\label{lem:poly_coef}
    Let \( f \) be any multivariable polynomial of degree less than \( k \).
    Assume that for $r>0$, \( |f(x)| \leq 1 \) for all \(x \in [-r,r]^d\). Then, the following holds:
\[
\sum_{\|\alpha\|_1\leq k }|c_\alpha| \leq e^{\frac{2dk}{r}},
\]
where \( c_\alpha \) are the coefficients of \( f \).
\end{lem}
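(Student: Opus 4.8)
The plan is to bound the coefficients of a multivariable polynomial by its sup-norm on a cube using a tensor-product argument that reduces everything to the one-dimensional case. First I would recall the one-dimensional estimate: if $g(t) = \sum_{j=0}^{k} a_j t^j$ satisfies $|g(t)| \le 1$ for all $t \in [-r,r]$, then each coefficient is controlled. The natural tool here is a \emph{coefficient extraction formula} combined with the classical Markov--Bernstein machinery. One clean route is to write $a_j = g^{(j)}(0)/j!$ and invoke the Markov brothers' inequality on the rescaled interval, which bounds derivatives of a bounded polynomial; another is to expand $g$ in the Chebyshev basis on $[-r,r]$, whose coefficients are bounded by the sup-norm, and then convert Chebyshev coefficients to monomial coefficients. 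Either way, the one-dimensional bound I am aiming for is of the form $\sum_{j=0}^{k} |a_j| \le (1 + 2/r)^k$ or a comparable exponential-in-$k$ estimate, since $(1+2/r)^k \le e^{2k/r}$.

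The key reduction is the tensor-product structure of the cube $[-r,r]^d$. Writing $f(x) = \sum_{\|\alpha\|_1 \le k} c_\alpha x^\alpha$, I would fix all but one variable and apply the one-dimensional bound iteratively. Concretely, for fixed $(x_2,\dots,x_d) \in [-r,r]^{d-1}$ the function $x_1 \mapsto f(x)$ is a one-variable polynomial of degree at most $k$ bounded by $1$ on $[-r,r]$, so the sum of the absolute values of its coefficients (which are themselves polynomials in the remaining variables) is bounded by $e^{2k/r}$, uniformly in the frozen variables. Iterating this across all $d$ coordinates multiplies the one-dimensional factors, yielding the bound $e^{2dk/r}$. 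The factor $d$ in the exponent arises precisely because each of the $d$ coordinate directions contributes one factor of $e^{2k/r}$.

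The main obstacle will be making the iterative freezing argument rigorous: after extracting coefficients in the first variable, one must verify that the resulting partial sums of $|{\cdot}|$ over the remaining variables can again be fed into the one-dimensional estimate, which requires a uniform sup-norm bound on the intermediate polynomials over the lower-dimensional cube. This is where one must be careful that bounding $\sum_j |a_j(x_2,\dots,x_d)|$ by $e^{2k/r}$ \emph{pointwise} does not immediately give a sup-norm bound of the \emph{individual} coefficient polynomials $a_j$ suitable for the next induction step; the cleanest fix is to carry out the extraction via a single explicit linear functional on coefficients (for instance, evaluating $f$ at a suitable family of Chebyshev nodes and using the boundedness of the inverse Vandermonde / dual Chebyshev coefficients) so that the tensor structure decouples additively. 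I would therefore present the proof as an induction on $d$, with the base case $d=1$ handled by the Chebyshev-coefficient bound and the inductive step treating $f$ as a polynomial in $x_d$ with coefficients in $\mathcal{C}([-r,r]^{d-1})$, applying the inductive hypothesis to each coefficient after bounding its sup-norm via the one-dimensional estimate applied slicewise.
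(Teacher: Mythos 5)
Your overall plan (freeze all but one coordinate, apply a one--dimensional coefficient estimate, and tensorize over the $d$ directions) is close in spirit to what the paper does, but the quantity you propagate through the induction creates a gap that you flag without closing. If at each slicing step you only record the pointwise bound $\sum_{j}|a_j(x_2,\dots,x_d)|\le e^{2k/r}$, then all you can pass to the next step is the \emph{individual} bound $\|a_j\|_{\mathcal{C}([-r,r]^{d-1})}\le e^{2k/r}$ for each of the $k+1$ coefficient polynomials separately; applying the inductive hypothesis to each $a_j$ and then summing over $j$ multiplies the estimate by $k+1$ at every coordinate, giving something like $(k+1)^{d-1}e^{2dk/r}$ rather than $e^{2dk/r}$. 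Your proposed repair via evaluation at Chebyshev nodes and the inverse Vandermonde incurs analogous polynomial-in-$k$ losses through the Lebesgue constants. These extra factors are not harmless for the statement as written: the slack in $\ln(1+2/r)<2/r$ is only of order $1/r^{2}$ for large $r$, so for $r\gtrsim\sqrt{k/\ln k}$ the polynomial prefactor is not absorbed and the resulting inequality is strictly weaker than the claimed $e^{2dk/r}$ (though it would still suffice for the application in Theorem~\ref{thm:rate}).

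The clean way to make a slicing argument work is to tensorize \emph{individual} coefficient bounds rather than coefficient sums: if each one-dimensional extraction gives $|a_j|\le B_{k,j}\,\|f\|_{\infty}$ with $\sum_{j}B_{k,j}\le(1+2/r)^{k}$, then iterating yields $|c_\alpha|\le\prod_{\ell}B_{k,\alpha_\ell}$, and the sum over multi-indices factorizes, $\sum_{\alpha}|c_\alpha|\le\prod_{\ell}\sum_{j}B_{k,j}\le(1+2/r)^{dk}\le e^{2dk/r}$, with no $(k+1)$ factors. This is essentially the paper's route in a different guise: it writes $c_\alpha=\partial^{\alpha}f(0)/\alpha!$, applies the Markov--Bernstein inequality for higher derivatives one coordinate at a time to obtain $|c_\alpha|\le\frac{1}{\alpha!}\left(\frac{2k}{r}\right)^{\|\alpha\|_1}$, and then sums using the multinomial identity $\sum_{\|\alpha\|_1=j}\frac{1}{\alpha!}=\frac{d^{j}}{j!}$ to get $\sum_{j}\frac{1}{j!}\left(\frac{2dk}{r}\right)^{j}\le e^{2dk/r}$. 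The factorial retained in the per-coefficient bound is precisely what makes the constant come out as a clean exponential with no polynomial prefactor; as written, your proposal does not reach the stated constant.
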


\begin{proof}
  Fix any multi-index $\alpha = (\alpha_1,\dots,\alpha_d) \in \mathbb{Z}_+^d$ with $\|\alpha\|_1 \le k$. Recall that the Taylor coefficient of $f$ at $0$ corresponding to $\alpha$ is given by
\[
c_{\alpha} = \frac{1}{\alpha!}\,\frac{\partial^\alpha f(0)}{\partial x^\alpha}
= \frac{1}{\alpha!}\,\frac{\partial^{\alpha_1}\partial^{\alpha_2}\cdots \partial^{\alpha_d} f(0)}{\partial x_1^{\alpha_1}\partial x_2^{\alpha_2}\cdots \partial x_d^{\alpha_d}}.
\]

\medskip

\noindent \textbf{Step 1} (Reduction to a univariate polynomial). 
Fix any $(x_1,\dots,x_{d-1})\in [-r,r]^{d-1}$ and consider $f$ as a univariate function in the variable $x_d$. Since 
\[
|f(x)| \le 1 \quad \forall\, x \in [-r,r]^d,
\]
we apply Bernstein's inequality for higher derivatives (see, e.g., \cite[Sec.~5.2.E.5]{borwein2012polynomials}) to deduce that for any fixed $(x_1,\dots,x_{d-1})$ 
\[
\left|\frac{\partial^{\alpha_d} f}{\partial x_d^{\alpha_d}}(x_1,x_2,\dots,x_{d-1},0)\right| \le \left(\frac{2\alpha_d}{r}\right)^{\alpha_d}.
\]

\noindent \textbf{Step 2} (Induction on the number of variables).
Now, fix $x_d=0$, and define
\[
g(x_1,\dots,x_{d-1}) = \frac{\partial^{\alpha_d} f}{\partial x_d^{\alpha_d}}(x_1,\dots,x_{d-1},0).
\]
Then $g$ is a polynomial in $(x_1,\dots,x_{d-1})$ of total degree at most $k-\alpha_d$ and
\[ \|g\|_{\mathcal{C}([-r,r]^{d-1})} \leq \left(\frac{2\alpha_d}{r}\right)^{\alpha_d}.\]
Applying the same argument as in Step 1, we 
\[
\left| \frac{\partial^{\alpha_{d-1} } g}{\partial x_{d-1}^{\alpha_{d-1}}} (x_1,\ldots, x_{d-2},0)\right| \leq \left(\frac{2k}{r}\right)^{\alpha_d} \left(\frac{2(k-\alpha_d)}{r}\right)^{\alpha_{d-1}}.
\]
Proceeding inductively over the remaining variables yields:
\[
|c_{\alpha}| \le \frac{(2k)^{\alpha_d} (2(k-\alpha_d))^{\alpha_{d-1}} \cdots \Bigl(2\Bigl(k-\sum_{j=2}^d \alpha_j\Bigr)\Bigr)^{\alpha_1}}{ r^{\|\alpha\|_1}\, \alpha_1! \, \alpha_2! \cdots \alpha_d!}.
\]
It follows that
\[
|c_{\alpha}| \le \frac{(2k)^{\alpha_1+\alpha_2+\cdots+\alpha_d}}{r^{\|\alpha\|_1}\,\alpha!} = \frac{1}{\alpha!} \left(\frac{2k}{r} \right)^{\|\alpha\|_1}.
\]

\medskip

\noindent \textbf{Step 3} (Summing over all multi-indices). 
Summing these estimates over all $\alpha$ with $\|\alpha\|_1 \le k$, we have
\[
\sum_{\|\alpha\|_1 \le k} |c_\alpha| \le \sum_{\|\alpha\|_1 \le k}\frac{1}{\alpha!} \left(\frac{2k}{r} \right)^{\|\alpha\|_1}= \sum_{j=0}^k \sum_{\|\alpha\|_1 = j} \frac{1}{\alpha!}\left(\frac{2k}{r} \right)^{j}.
\]
A standard combinatorial identity shows that
\[
\sum_{\|\alpha\|_1 = j} \frac{1}{\alpha!} = \frac{d^j}{j!},
\]
so that
\[
\sum_{\|\alpha\|_1 \le k} |c_\alpha| = \sum_{j=0}^k \frac{1}{j!}\left(\frac{2dk}{r} \right)^{j} \le e^{\frac{2dk}{r}}.
\]
Thus, the proof is complete.
\end{proof}

\subsection{Technical lemmas and proofs}
\begin{lem}\label{lem:Stirling}
For any $k \geq 1$ and $T > 0$, we have
\[
\sum_{j=0}^{\lfloor k/2 \rfloor} \frac{k^j (k-1)^j}{j!} \, T^j 
\leq 
 \sqrt{\frac{k}{\pi}} \, \exp\left( k + \frac{k}{2} \ln k \right)
\max \left\{ T^{\lfloor k/2 \rfloor}, \, 1 \right\}.
\]
\end{lem}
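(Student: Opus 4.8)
The plan is to bound the finite sum $S \coloneqq \sum_{j=0}^{\lfloor k/2 \rfloor} \frac{k^j (k-1)^j}{j!} T^j$ by controlling each term separately and then summing. First I would observe the crude bound $k^j(k-1)^j \leq k^{2j}$, so that $S \leq \sum_{j=0}^{\lfloor k/2 \rfloor} \frac{(k^2 T)^j}{j!}$. The essential difficulty is that I cannot simply replace this by $e^{k^2 T}$, since that would reintroduce exponential-in-$T$ growth and defeat the whole purpose of the moment method; instead I must exploit that the summation index $j$ is capped at $\lfloor k/2 \rfloor$, which is precisely what converts the exponential into a polynomial factor $T^{\lfloor k/2 \rfloor}$.

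The main technical step is therefore to split off the $T$-dependence using the cap on $j$. I would write $T^j \leq \max\{T^{\lfloor k/2\rfloor}, 1\}$ for every $j \in \{0,\dots,\lfloor k/2\rfloor\}$: indeed, if $T \geq 1$ then $T^j \leq T^{\lfloor k/2\rfloor}$, while if $T < 1$ then $T^j \leq 1$. Pulling this maximum out of the sum gives
\begin{equation*}
S \leq \max\{T^{\lfloor k/2\rfloor}, 1\} \sum_{j=0}^{\lfloor k/2 \rfloor} \frac{k^{2j}}{j!}.
\end{equation*}
It then remains to show the purely combinatorial estimate $\sum_{j=0}^{\lfloor k/2 \rfloor} \frac{k^{2j}}{j!} \leq \sqrt{k/\pi}\, \exp(k + \tfrac{k}{2}\ln k)$.

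To handle the residual sum $\Sigma \coloneqq \sum_{j=0}^{\lfloor k/2 \rfloor} \frac{k^{2j}}{j!}$, I would bound it by its largest term times the number of terms, or more efficiently dominate it by the single largest summand using the ratio test. The ratio of consecutive terms is $\frac{k^2}{j+1}$, which exceeds $1$ as long as $j+1 \leq k^2$; since $j \leq \lfloor k/2\rfloor \ll k^2$, the terms are increasing throughout the range, so the last term $\frac{k^{2\lfloor k/2\rfloor}}{\lfloor k/2\rfloor!}$ dominates. Thus $\Sigma \leq (\lfloor k/2\rfloor + 1)\, \frac{k^{k}}{\lfloor k/2\rfloor!}$ (using $2\lfloor k/2\rfloor \leq k$). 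The target form $\exp(k + \tfrac{k}{2}\ln k) = e^k\, k^{k/2}$ together with the prefactor $\sqrt{k/\pi}$ strongly suggests that the true mechanism is a Stirling estimate: applying the lower bound $\lfloor k/2\rfloor! \geq \sqrt{2\pi \lfloor k/2\rfloor}\,(\lfloor k/2\rfloor / e)^{\lfloor k/2\rfloor}$ to the denominator and simplifying $k^{2j}/(j!)$ at $j = \lfloor k/2\rfloor$ should produce $k^{k/2} e^{k/2}$ from the factorial and an additional $e^{k/2}$ from crude constants, matching $e^k k^{k/2}$. The $\sqrt{k/\pi}$ factor is exactly the Stirling prefactor $1/\sqrt{2\pi \lfloor k/2\rfloor} \approx \sqrt{1/(\pi k)}$ once the counting factor $(\lfloor k/2\rfloor + 1) \approx k/2$ is absorbed. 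The hard part is thus carrying out this Stirling bookkeeping cleanly so that all the crude constants (from $k^2T \to \max$, from $2\lfloor k/2\rfloor \leq k$, from the counting factor, and from the Stirling remainder) fit under the stated clean bound; I expect this to require only elementary but careful estimation, with no conceptual obstruction.
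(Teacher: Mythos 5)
Your proposal is correct and follows essentially the same route as the paper's proof: factor out $\max\{T^{\lfloor k/2\rfloor},1\}$ using the cap on $j$, dominate the remaining sum by its largest (last) term times the number of terms, and apply Stirling's lower bound $m!\geq\sqrt{2\pi m}\,(m/e)^m$ to the factorial. The only cosmetic differences are your cruder bound $k^j(k-1)^j\leq k^{2j}$ (the paper keeps $k^j(k-1)^j$ and treats even and odd $k$ separately) and your counting factor $(\lfloor k/2\rfloor+1)$ versus the paper's $1+\lfloor k/2\rfloor\cdot(\text{last term})$; the bookkeeping you defer does close under the stated constant.
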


\begin{proof}
By H\"older's inequality, we have
\[
\sum_{j=0}^{\lfloor k/2 \rfloor} \frac{k^j (k-1)^j}{j!} \, T^j
\leq 
\max\left\{ T^{\lfloor k/2 \rfloor}, \, 1 \right\} \sum_{j=0}^{\lfloor k/2 \rfloor} \frac{k^j (k-1)^j}{j!}.
\]
It remains to bound the sum
\[
S \triangleq \sum_{j=0}^{\lfloor k/2 \rfloor} \frac{k^j (k-1)^j}{j!}.
\]
Let us first consider the case where $k$ is even. Let \( k = 2m \). Then
\[
S \leq 1 + m \cdot \frac{k^m (k-1)^m}{m!}.
\]
Using Stirling's lower bound
\[
n! \geq \sqrt{2\pi n} \left( \frac{n}{e} \right)^n, \quad \mathrm{for\,} n \in \mathbb{Z}_+,
\]
we obtain
\[
\frac{k^m (k-1)^m}{m!}
\leq 
\frac{k^m (k-1)^m}{\sqrt{2\pi m} \left( \frac{m}{e} \right)^m }
=
\frac{1}{\sqrt{2\pi m}} \left( \frac{e^2 k (k-1)}{m^2} \right)^m.
\]
Since \( m = k/2 \), we have
\[
\frac{k^m (k-1)^m}{m!}
\leq 
\frac{1}{\sqrt{\pi k}} (2e(k-1))^{k/2}
=
\frac{1}{\sqrt{\pi k}} \exp\left( \frac{k}{2} \ln(2e(k-1)) \right).
\]
Therefore,
\[
S \leq 1 + \frac{1}{2} \sqrt{\frac{k}{\pi}} \exp\left( k + \frac{k}{2} \ln(k - 1) \right), \quad \mathrm{for\, even\, } k.
\]
Then for the odd case \( k = 2m + 1 \),  a similar argument gives that for odd $k\geq 3$,
\[
S \leq 1 + m \cdot \frac{k^m (k-1)^m}{m!}
\leq 1 + \frac{1}{2} \sqrt{\frac{k-1}{\pi}} \exp\left( k - 1 + \frac{k-1}{2} \ln k \right).
\]
In both cases, we can bound uniformly for all \( k \geq 2 \) by
\[
S \leq 1 + \frac{1}{2} \sqrt{\frac{k}{\pi}} \exp\left( k + \frac{k}{2} \ln k \right) \leq \sqrt{\frac{k}{\pi}} \exp\left( k + \frac{k}{2} \ln k \right).
\]
One easily checks that the inequality also holds for \( k = 1 \). Therefore, the desired result follows.
\end{proof}

\begin{proof}(Proof of Lemma~\ref{lm:quadrature}).
Let $u^{\mathrm{obs}}$ denote the noisy observation of $u(\cdot,T)$ at time $T$, and let
\[
  z_i = (z_{i_1},\dots,z_{i_d}), 
  \qquad 
  i=(i_1,\dots,i_d)\in I_n \triangleq \{1,\dots,n\}^d,
\]
be the tensorized Gauss--Hermite nodes.  
The noise assumption gives
\[
  \bigl|u^{\mathrm{obs}}(2\sqrt{T}\,z_i)-u(2\sqrt{T}\,z_i,T)\bigr|
  \le \eta,
  \qquad \forall\, i\in I_n.
\]
Recall
\(
  g_\sigma(z) \triangleq z^\alpha\,u(2\sqrt{T}\,z,T).
\)
Then for each $i\in I_n$,
\[
  \bigl|g_\sigma(z_i)-z_i^\alpha u^{\mathrm{obs}}(2\sqrt{T}\,z_i)\bigr|
  \le |z_i^\alpha|\,\eta.
\]
By \eqref{eq:Gauss-1}–\eqref{eq:Gauss-3},
\begin{equation*}
    \frac{M_\alpha(T)-M_\alpha^n(T)}
       {(2\sqrt{T})^{\|\alpha\|_1+d}} =  \int_{\R^d} g_\sigma(z)e^{-\|z\|^2}\,\mathrm{d}z-
  \sum_{i\in I_n} c_i z_i^\alpha u^{\mathrm{obs}}(2\sqrt{T}\,z_i)=
  \gamma_1 + \gamma_2,
\end{equation*}
where $c_i=\prod_{k=1}^d \omega_{i_k}$ and $\omega_{i_k}$ are the 1D Gauss--Hermite weights,
\begin{equation*}
     \gamma_1 \triangleq \int_{\R^d} g_\sigma(z)e^{-\|z\|^2}\,\mathrm{d}z-  \sum_{i\in I_n} c_i g_\sigma(z_i),
\end{equation*}
and 
\begin{equation*}
     \gamma_2 \triangleq \sum_{i\in I_n} c_i \left(g_\sigma(z_i)-z_i^\alpha u^{\mathrm{obs}}(2\sqrt{T}\,z_i) \right).
\end{equation*}
We estimate $\gamma_1$ and $\gamma_2$ separately.

\medskip
\noindent\textbf{Step 1 (Estimate of $\gamma_1$).} 
For clarity, we first treat the one–dimensional case $d=1$.
Recall that
\[
  g_\sigma(z) = z^\alpha u(\sigma z,T)e^{|z|^2} 
  = z^\alpha e^{|z|^2} G_T*u_0^*(\sigma z),
  \qquad \sigma = 2\sqrt{T}.
\]
We can extend $g_\sigma$ to an entire function on $\mathbb{C}$. Moreover, for every
$\kappa>0$ there exists $A_\kappa>0$ such that
\begin{equation}\label{eq:gsigma-growth}
  |g_\sigma(z)| \;\le\; A_\kappa\,e^{\kappa|z|^2},
  \qquad \forall z\in\mathbb{C}.
\end{equation}
(Indeed, from the convolution formula and the compact support of $m_0^*$ one
obtains at most exponential growth in $|z|$, which can be absorbed into
$e^{\kappa|z|^2}$ for any fixed $\kappa>0$.)

The standard Gauss-type quadrature error formula states that there exists $\xi$ 
in the convex hull of the quadrature nodes such that
\[
  \int_\R g_\sigma(x)w(x)\,dx - Q_n(g_\sigma)
  =
  \frac{g_\sigma^{(2n)}(\xi)}{(2n)!}
  \int_\R p_n(x)^2 w(x)\,dx,
\]
where $p_n$ is the monic orthogonal polynomial of degree $n$ with respect to the weight
$w$ and $Q_n$ is the quadrature rule. 

For Gauss--Hermite, one has $w(x)=e^{-x^2}$, $p_n(x)=2^{-n}H_n(x)$, and
\[
  \int_\R p_n(x)^2 e^{-x^2}\,dx
  = \sqrt{\pi}\,2^{-n}n!, \quad \mathrm{and}\quad |\xi|\le C_0\sqrt{n},
\]
for some $C_0>0$ independent of $n$. 

We now bound $g_\sigma^{(2n)}(\xi)$ using Cauchy’s estimate with a radius that
depends on $n$. Fix $\kappa>0$ (to be chosen later) and let $A_\kappa$
be as in \eqref{eq:gsigma-growth}. For any $r>0$,
\[
  |g_\sigma^{(2n)}(\xi)|
  \;\le\;
  \frac{(2n)!}{r^{2n}}\,
  \max_{|z-\xi|=r} |g_\sigma(z)|
  \;\le\;
  \frac{(2n)!}{r^{2n}}\,
  A_\kappa e^{\kappa(|\xi|+r)^2}.
\]
Choose $r = C_1\sqrt{n}$ with some constant $C_1>0$.
Using $|\xi|\le C_0\sqrt{n}$, we obtain
\[
  |g_\sigma^{(2n)}(\xi)|
  \;\le\;
  (2n)!\,A_\kappa
  (C_1^2 n)^{-n} \exp\!\big(\kappa(C_0+C_1)^2 n\big).
\]
Hence
\[
  |\gamma_1|
  \le
  A_\kappa\sqrt{\pi}\,
  (C_1^2 n)^{-n}\exp\!\big(\kappa(C_0+C_1)^2 n\big)\,
  2^{-n}n!.
\]
Using Stirling’s estimate $n!\le C n^n e^{-n}$, we get
\[
  |\gamma_1|
  \le
  C'
  \left(
    \frac{\exp(\kappa(C_0+C_1)^2 - 1)}{2C_1^2}
  \right)^n,
\]
for some constant $C'$ independent of $n$. Now we can choose
$\kappa>0$ and $C_1>0$ such that
\[
\rho\triangleq  \frac{\exp(\kappa(C_0+C_1)^2 - 1)}{2C_1^2} \;<\; 1.
\]
With that choice we obtain
\[
  |\gamma_1|
  \;\le\;
  C' \rho^{n}
\]
for some $0<\rho<1$ independent of $n$. This proves the desired exponential
decay for $d=1$.

In $d$ dimensions, the tensor Gauss--Hermite rule is the product of the
one–dimensional rules and $g_\sigma$ is entire with the growth
\eqref{eq:gsigma-growth} on $\mathbb{C}^d$. Applying the above one–dimensional
argument iteratively in each coordinate yields the desired estimate.

\medskip
\medskip\noindent
\textbf{Step 2 (Estimate of $\gamma_2$).}
By the formula of $\gamma_2$, we have
\[
  |\gamma_2|
  \le \eta \sum_{i\in I_n} c_i\,|z_i^\alpha|.
\]
By the convergence of the tensor Gauss--Hermite rule for polynomially growing functions, the discrete Gaussian moments satisfy
\[
  \lim_{n\to +\infty} \sum_{i\in I_n} c_i\,|z_i^\alpha| = \int_{\mathbb{R}^d} |z^\alpha| e^{-\|z\|^2}\,\mathrm{d}z < \infty.
\]
Hence there exists a constant
$C_{\|\alpha\|_1,d}>0$, independent of $n$, such that
\[
  \sum_{i\in I_n} c_i\,|z_i^\alpha| \le C_{\|\alpha\|_1,d},
\]
and therefore
\[
  |\gamma_2| \le C_{\|\alpha\|_1,d}\,\eta.
\]

\medskip
Combining the bounds on $\gamma_1$ and $\gamma_2$ and rescaling back to
$M_\alpha(T)$ yields the stated estimate.
\end{proof}

\section{Conclusion and perspectives}
In this article, we introduced a moment-based approach to the inverse source identification problem for the heat equation, a classically ill-posed problem. The key idea is to reformulate the original problem via a moment system, which mitigates the time-related ill-posedness of the backward heat equation.
The proposed scheme proceeds in two main steps: 
\begin{enumerate}
    \item inversion of the moment system to recover the moments of the initial distribution from noisy measurements;
    \item reconstruction of the initial source by minimizing the total variation under the recovered moment constraints.  
\end{enumerate}
We further established an error estimate for the reconstructed distribution in the Kantorovich distance, which depends on the moment order, terminal time, and observation noise.

From an algorithmic perspective, we detailed how moments at terminal time can be estimated using quadrature techniques, and discussed the discretization and numerical solution of the resulting constrained optimization problem. Numerical experiments demonstrate the effectiveness of our approach. In particular, our method achieves robust recovery of the initial data for terminal times up to \(T = 100\), significantly improving over existing benchmarks in the literature, which are typically limited to very small times (e.g., \(T \approx 0.01\)).

\medskip

\textit{Perspectives.} Here are some directions for future work related to this article:

\begin{enumerate}
  \item (Heat equation with boundary conditions). 
    When the heat equation \eqref{eq:heat} is posed with boundary conditions (for example, the Dirichlet condition \(u(x,t)=0\) on \(\partial\Omega\)), the ODE system \eqref{eq:ODE_moments} no longer accurately captures the evolution of the moments.  
    One alternative is to replace the raw moments by the modal coefficients
    \[
      a_m(t) \;=\;\int_{\Omega}u(x,t)\,\varphi_m(x)\,\mathrm{d}x,
    \]
    where \(\{\varphi_m\}\) are the Dirichlet eigenfunctions on \(\Omega\).  These coefficients satisfy a closed (though non-nilpotent) ODE system, but its inversion is typically less stable in time than the boundary-free moment method we have developed.  A more pragmatic approach is simply to apply our moment method while ignoring the boundary conditions.  For short time horizons (\(T\) small), the influence of the boundaries is mild, and this ``unadjusted" method can still yield accurate numerical results.

   \item(Advection-Diffusion Dynamics). 
A natural extension of the heat equation is the advection-diffusion process: 
\[
\left\{
\begin{array}{ll}
\displaystyle \partial_t m - \mathrm{div}\,\bigl(D\,\nabla m - v\,m\bigr) = 0, 
& \quad \mathrm{for}\ (x,t) \in \mathbb{R}^n \times \mathbb{R}_+, \\[6pt]
\displaystyle m(0,x) = m_0(x), 
& \quad \mathrm{for}\ x \in \mathbb{R}^n.
\end{array}
\right.
\]
where \(D\) is a positive-definite diffusion matrix and \(v\) is the advection (drift) vector.  The density \(m(t,x)\) describes the law of the stochastic process
\[
\left\{
\begin{array}{l}
\displaystyle \mathrm{d}X_t = v\, \mathrm{d}t + \sqrt{2}\, D^{1/2}\, \mathrm{d}W_t, \\[0.8em]
\displaystyle X_0 \sim m_0,
\end{array}
\right.
\]
with \(\{W_t\}_{t\ge0}\) a standard Brownian motion.  The initial source identification problem seeks to recover the distribution \(m_0\) by some information of \(m(T)\).  Our moment method is especially effective when \(D\) and \(v\) are spatially invariant and piecewise constant in time. In this setting, the resulting system of moment ODEs is closed and nilpotent.
For space-dependent $D$ and $v$,  the problem becomes significantly more complex and remains a subject for future investigation.
\end{enumerate}

\section*{References}


\begin{thebibliography}{00}

\bibitem{bach2017breaking}
Francis Bach.
\newblock Breaking the curse of dimensionality with convex neural networks.
\newblock {\em Journal of Machine Learning Research}, pages 1--53, 2017.


\bibitem{bayer2006tchakaloff}
Christian Bayer and Josef Teichmann.
\newblock The proof of Tchakaloff’s theorem.
\newblock {\em Proceedings of the American Mathematical Society}, 134(10):3035--3040, 2006.


\bibitem{bertsimas1997introduction}
Dimitris Bertsimas and John Tsitsiklis.
\newblock {\em Introduction to linear optimization}.
\newblock Athena Scientific Belmont, MA, 1997.


\bibitem{biccari2023two}
Umberto Biccari, Yongcun Song, Xiaoming Yuan, and Enrique Zuazua.
\newblock A two-stage numerical approach for the sparse initial source
  identification of a diffusion--advection equation.
\newblock {\em Inverse Problems}, 39(9):095003, 2023.

\bibitem{borwein2012polynomials}
Peter Borwein and Tam{\'a}s Erd{\'e}lyi.
\newblock {\em Polynomials and polynomial inequalities}.
\newblock Springer Science \& Business Media, 2012.

\bibitem{bos2007bivariate}
Len Bos, Stefano De~Marchi, Marco Vianello, and Yuan Xu.
\newblock Bivariate lagrange interpolation at the padua points: the ideal
  theory approach.
\newblock {\em Numerische Mathematik}, 108:43--57, 2007.

\bibitem{caliari2005bivariate}
Marco Caliari, Stefano De~Marchi, and Marco Vianello.
\newblock Bivariate polynomial interpolation on the square at new nodal sets.
\newblock {\em Applied Mathematics and Computation}, 165(2):261--274, 2005.

\bibitem{casas2019using}
Eduardo Casas and Karl Kunisch.
\newblock Using sparse control methods to identify sources in linear
  diffusion-convection equations.
\newblock {\em Inverse problems}, 35(11):114002, 2019.

\bibitem{chambolle2011first}
Antonin Chambolle and Thomas Pock.
\newblock A first-order primal-dual algorithm for convex problems with
  applications to imaging.
\newblock {\em Journal of Mathematical Imaging and Vision}, pages 120--145,
  2011.

\bibitem{chen2001atomic}
Shaobing Chen, David Donoho, and Michael~A. Saunders.
\newblock Atomic decomposition by basis pursuit.
\newblock {\em SIAM Review}, pages 129--159, 2001.

\bibitem{chizat2018global}
Lenaic Chizat and Francis Bach.
\newblock On the global convergence of gradient descent for over-parameterized
  models using optimal transport.
\newblock {\em Advances in Neural Information Processing Systems}, 2018.

\bibitem{duoandikoetxea1992moments}
Javier Duoandikoetxea and Enrique Zuazua.
\newblock Moments, masses de dirac et d{\'e}composition de fonctions.
\newblock {\em CR Acad. Sci. Paris S{\'e}r. I Math}, pages 693--698, 1992.

\bibitem{duval2015exact}
Vincent Duval and Gabriel Peyr{\'e}.
\newblock Exact support recovery for sparse spikes deconvolution.
\newblock {\em Foundations of Computational Mathematics}, pages 1315--1355,
  2015.

\bibitem{el2005identification}
Abdellatif El Badia, Tuong Ha-Duong, and Adel Hamdi.
\newblock Identification of a point source in a linear
  advection--dispersion--reaction equation: application to a pollution source
  problem.
\newblock {\em Inverse Problems}, 21(3):1121, 2005.

\bibitem{engl1996regularization}
Heinz Werner Engl, Martin Hanke, and Andreas Neubauer.
\newblock Regularization of inverse problems.
\newblock {\em Mathematics and its Applications}, 375, 1996.

\bibitem{fisher1975spline}
Stephen Fisher and Joseph Jerome.
\newblock Spline solutions to $l_1$ extremal problems in one and several
  variables.
\newblock {\em Journal of Approximation Theory}, pages 73--83, 1975.

\bibitem{glowinski1975approximation}
Roland Glowinski and Americo Marroco.
\newblock Sur l'approximation, par {\'e}l{\'e}ments finis d'ordre un, et la
  r{\'e}solution, par p{\'e}nalisation-dualit{\'e} d'une classe de
  probl{\`e}mes de dirichlet non lin{\'e}aires.
\newblock {\em Revue fran{\c{c}}aise d'automatique, informatique, recherche
  op{\'e}rationnelle. Analyse num{\'e}rique}, pages 41--76, 1975.

\bibitem{goldstein2009split}
Tom Goldstein and Stanley Osher.
\newblock The split bregman method for $l1$-regularized problems.
\newblock {\em SIAM Journal on Imaging Sciences}, pages 323--343, 2009.

\bibitem{hanin1999kantorovich}
Leonid Hanin.
\newblock An extension of the Kantorovich norm.
\newblock In {\em Monge Amp\`ere Equation: Applications to Geometry and
  Optimization (Deerfield Beach, FL, 1997)}, volume 226 of {\em Contemp.
  Math.}, pages 113--130. Amer. Math. Soc., Providence, RI, 1999.

\bibitem{li2006determining}
Gongsheng Li et al.
\newblock Determining magnitude of groundwater pollution sources by data
  compatibility analysis.
\newblock {\em Inverse Problems in Science and Engineering}, 14(3):287--300,
  2006.

\bibitem{li2014heat}
Yingying Li, Stanley Osher, and Richard Tsai.
\newblock Heat source identification based on constrained minimization.
\newblock {\em Inverse Problems and Imaging}, 8(1):199--221, 2014.

\bibitem{liu2025representation}
Kang Liu and Enrique Zuazua.
\newblock Representation and regression problems in neural networks:
  Relaxation, generalization, and numerics.
\newblock {\em Mathematical Models and Methods in Applied Sciences}, 35(06):1471--1521, 2025.

\bibitem{mei2018mean}
Song Mei, Andrea Montanari, and Phan-Minh Nguyen.
\newblock A mean field view of the landscape of two-layer neural networks.
\newblock {\em Proceedings of the National Academy of Sciences},
  115(33):E7665--E7671, 2018.

\bibitem{newman1964jackson}
Donald Newman and Harold Shapiro.
\newblock Jackson's theorem in higher dimensions.
\newblock In {\em On Approximation Theory}, pages 208--219. Springer, 1964.

\bibitem{olver2010nist}
Frank Olver et al.
\newblock {\em NIST Handbook of Mathematical Functions}.
\newblock Cambridge University Press, 2010.

\bibitem{olver2006multivariate}
Peter Olver.
\newblock On multivariate interpolation.
\newblock {\em Studies in Applied Mathematics}, 116(2):201--240, 2006.

\bibitem{osher2005iterative}
Stanley Osher et al.
\newblock An iterative regularization method for total variation-based image
  restoration.
\newblock {\em Multiscale Modeling \textnormal{\&} Simulation}, pages 460--489,
  2005.

\bibitem{ozisik2018inverse}
Necat Ozisik.
\newblock {\em Inverse heat transfer: fundamentals and applications}.
\newblock Routledge, 2018.

\bibitem{piccoli2016properties}
Benedetto Piccoli and Francesco Rossi.
\newblock On properties of the generalized wasserstein distance.
\newblock {\em Archive for Rational Mechanics and Analysis}, 222:1339--1365,
  2016.

\bibitem{tikhonov1963solution}
Andrei~Nikolaevich Tikhonov.
\newblock On the solution of ill-posed problems and the method of
  regularization.
\newblock In {\em Doklady akademii nauk}, volume 151, pages 501--504. Russian
  Academy of Sciences, 1963.

\bibitem{unser2019representer}
Michael Unser.
\newblock A representer theorem for deep neural networks.
\newblock {\em Journal of Machine Learning Research}, pages 1--30, 2019.

\bibitem{unser2017splines}
Michael Unser, Julien Fageot, and John~Paul Ward.
\newblock Splines are universal solutions of linear inverse problems with
  generalized TV regularization.
\newblock {\em SIAM Review}, pages 769--793, 2017.

\bibitem{villani2009}
C\'edric Villani.
\newblock {\em Optimal transport: old and new}.
\newblock Springer, 2009.


\end{thebibliography}
\end{document}